\theoremstyle{plain}
\newtheorem*{M}{Main Theorem}
\newtheorem{corollary}{Corollary}[section]
\newtheorem{lemma}{Lemma}[section]
\newtheorem{remark}{Remark}[section]
\newtheorem{proposition}{Proposition}[section]
\newtheorem{definition}{Definition}[section]
\begin{document}
\title[Topological characterization of  the Sine Family]
{Topological characterization of the hyperbolic maps in the Sine
Family}
\author{Gaofei Zhang}
\address{Department of  Mathematics \\ Nanjing University
\\Nanjing,  210093, P. R. China}
\email{zhanggf@hotmail.com}

\thanks{}

\subjclass[2000]{58F23, 30D05}

\maketitle

\begin{abstract}
The purpose of this paper is to establish a topological
characterization of all the hyperbolic maps in the Sine family
$\{\lambda \sin(z) \:\big{|}\:\lambda \ne 0\}$  which have
super-attracting cycles.
\end{abstract}

\section{Introduction}
We assume that the readers are familiar with the paper \cite{DH}.
The topological characterization theorem of all post-critically
finite rational maps, which was established by Thurston in early
1980's, plays a central role within complex dynamics.  Since then,
Hubbard has asked to what extent the result generalizes. For
example, does it generalize to certain family of entire functions?
or to rational functions which are no longer post-critically finite?
Indeed, Hubbard writes in the introduction in his recent
Teichm\"{u}ller theory text \cite{H}: "In particular, we hope
Thurston's theorem $\cdots$  might be extended to mappings that are
not post-critically finite."

 Thurston's proof  depends essentially on the fact that the
branched covering map of the sphere to itself is of finite degree
and that the dimension of the underlying Teichm\"{u}ller space is
finite. The former does not hold for transcendental entire functions
and the later does not hold for holomorphic maps with post-critical
sets being infinite. Nevertheless, by getting around these
difficulties, Hubbard, Schleicher and Shishikura extended the
Thurston's theorem to exponential family \cite{HSS}; Cui, Jiang and
Sullivan extended it to sub-hyperbolic rational maps
\cite{CJS1}\cite{CJS2}\cite{CT}\cite{ZJ};  the author extended it to
certain family of rational maps with bounded type Siegel disks
\cite{Zh1}.  Besides these, the reader may
 also refer to \cite{B} for some relative knowledge in this aspect.

Let $f_{\lambda}(z) = \lambda \sin(z), \lambda \ne 0$.  The main
interest of the iteration of this family  lies in the fact that it
exhibits the typical features of quadratic polynomials and
exponential maps both of which have been extensively studied since
1980's. For instance,  in the parameter plane of the Sine family,
one can find infinitely many periodic escaping rays,  and between
these rays, one can also find  infinitely many embedded copies of
the Mandelbrot set. To understand how the rays and the Mandelbrot
copies are organized together, a key tool is the topological
characterization of all hyperbolic maps in the Sine family.  The
purpose of this paper is to establish such a characterization.

We say $f_{\lambda}$ is hyperbolic if every  critical point of
$f_{\lambda}$  is attracted to some attracting periodic cycle. By
the symmetry, if $f_{\lambda}$ is hyperbolic, it has either one or
two attracting periodic cycles. In the latter case, the two periodic
cycles are symmetric about the origin.   Note that $f_{\lambda}$ has
no asymptotic values and thus $f_{\lambda}$ has exactly two singular
values, $-\lambda$ and $\lambda$. Since the order of $f_{\lambda}$
is one, by \cite{EL} it follows that $f_{\lambda}$ has neither
wandering domains nor Baker domains.

We say  $\lambda \ne 0$ is a  hyperbolic parameter if $f_{\lambda}$
is hyperbolic. The set of hyperbolic parameters is an open set in
the plane. The components of this set are called hyperbolic
components of the parameter plane. Among all these hyperbolic
components, there is a very special one which is the punctured unit
disk. The dynamics of the maps corresponding to the parameters in
this component had been clearly understood. For instance,  it was
shown in \cite{DS} that  for $0 < |\lambda| < 1$, $f_{\lambda}$ has
an immediate attracting basin at the origin which is actually the
unique Fatou component of $f_{\lambda}$ and is therefore unbounded
and contains all the critical points of $f_{\lambda}$.  In an
unpublished work \cite{Zh2}, we show that for hyperbolic parameters
$\lambda$ with $|\lambda|
> 1$,  there are exactly two escaping rays
landing on the origin. This allows us to construct certain puzzle
pieces $V \subset U$ such that the restriction $f_{\lambda}^{m}: V
\to U$ can be thickened into a polynomial-like map, where $m$ is the
length of the attracting periodic cycle of $f_{\lambda}$.  It
follows that for hyperbolic maps $f_{\lambda}$ with $|\lambda| > 1$,
all the Fatou component are bounded, and each Fatou component
contains at most one critical point.  In particular, one can show
that for each hyperbolic component other than the punctured unit
disk, there is a unique parameter $\lambda_{0}$, called the center
of the hyperbolic component, such that one of the critical points of
$f_{\lambda_{0}}$ is periodic. In this case we call
$f_{\lambda_{0}}$ a $\emph{center hyperbolic map}$. By the symmetry,
$f_{\lambda_{0}}$  is a center hyperbolic map  if and only if it has
either one or two super-attracting cycles. Through a standard
quasiconformal surgery, any hyperbolic map $f_{\lambda}$ with
$|\lambda| > 1$  can be transformed to a map $f_{\lambda_{0}}$ such
that $\lambda_{0}$ is the center of the hyperbolic component
containing $\lambda$. It turns out that the center hyperbolic map
carries all the essential
 topological data and  dynamical properties  of
all the maps with the parameters belonging to the same hyperbolic
component. This means to topologically characterize the hyperbolic
maps $f_{\lambda}$ with $|\lambda| > 1$,  it suffices to do this for
all the center hyperbolic maps.

Let $\Bbb C$ denote the complex plane. Let $f:{\Bbb C} \to {\Bbb C}$
be a  finitely or infinitely branched covering map. We will call $
\Omega_{f} = \{x\in S^{2}\:\big{|}\: \deg_{f}(x) \ge 2\} $ the
critical set  and $ P_{f} = \overline{\bigcup_{k\ge
1}f^{k}(\Omega_{f})} $ the post-critical set. Every point in
$\Omega_{f}$ is called a critical point of $f$.
\begin{definition}{\rm
Let $\Sigma_{geom}^{c}$ denote the class of all center hyperbolic
maps in the Sine family, that is, a map $f_{\lambda}$ belongs to
$\Sigma^{c}_{geom}$ if and only if it has at least one  periodic
critical point. }
\end{definition}

\begin{definition}\label{top}{\rm
Let $\Sigma^{c}_{top}$ denote the class of the all the infinitely
branched coverings $f:{\Bbb C} \to {\Bbb C}$ such that
\begin{itemize}
\item[1.] $f(-z) = -f(z)$ and $f(z+ \pi) = -f(z)$,
\item[2.] $\Omega_{f} = \{\pi/2+k\pi\:\big{|}\: k\in \Bbb Z\}$,
\item[3.] $f$ maps the imaginary axis homeomorphically to a straight
line $L$ passing through the origin, and moreover, $f$ maps the
strip
$$
S = \{x + iy\:\big{|}\: 0< x < \pi, -\infty < y < \infty\}
$$
two-to-one onto one of the two half planes separated by $L$,
\item[4.] at least one of the critical points of $f$ is periodic.
\end{itemize}
}
\end{definition}

\begin{definition}\label{combinatorial}{\rm
Let $f , g \in \Sigma^{c}_{top}$. We say $f$ and $g$  are
$\emph{combinatorially equivalent}$ to each other if there exist a
pair of plane homeomorphisms $\phi, \psi: {\Bbb C} \to {\Bbb C}$
such that (1) $\phi f=g\psi$, and (2) $\psi$ is  isotopic to $\phi$
rel $P_f$. }
\end{definition}

 It is clear that $\Sigma_{geom}^{c} \subset
\Sigma_{top}^{c}$. The main result of the paper is as follows.
\begin{M}
Let $f \in \Sigma_{top}^{c}$. Then there is a unique $g \in
\Sigma_{geom}^{c}$ such that $f$ and $g$ are combinatorially
equivalent to each other.
\end{M}

Since $P_{f}$ is a finite set, we may assume that $f$ is
quasi-regular. As in \cite{DH}, the basic idea of our proof is to
consider the iteration of the pull back operator $\sigma_{f}$
defined on the Teichm\"{u}ller space $T_{f}$ modeled on $({\Bbb C},
P_{f})$.  Let $\mu(z)$ be a Beltrami coefficient on the complex
plane. We say $\mu$  is $\emph{admissible}$ if
$$
\mu(z) = \mu(-z) = \mu(z + \pi).
$$
Let $\mu_{0}$ be an admissible Beltrami coefficient on the complex
plane with $\|\mu_{0}\|_{\infty} = \kappa < 1$.  Let $\mu_{n}$
denote the pull back of $\mu_{0}$ by $f^{n}$. Let $\phi_{n}: {\Bbb
C} \to {\Bbb C}$ denote the quasi-conformal homeomorphism which
fixes   $0$ and $\pi$ and  which solves the Beltrami equation given
by $\mu_{n}$. We will show that there is a sequence of $\lambda_{n}
\ne 0$ such that $\phi_{n}\circ f\circ \phi_{n+1}^{-1}(z) =
\lambda_{n} \sin(z)$ (Lemma~\ref{symm}).

For $b > 0$ let  $T_{f,b}$  be the subset of $T_{f}$ consisting of
all the elements $[\mu]$ such that if $\phi:{\Bbb C}\to{\Bbb C}$ is
the quasiconformal homeomorphism which fixes $0$, $\pi$ and the
infinity and  solves the Beltrami equation given by $\mu$, then the
spherical distance between any two distinct points in $\phi(P_{f})
\cup \{\infty\}$ is not less than $b$.  We will prove that for any
$\tau \in T_{f,b}$, there is a $0< \delta< 1$ depending only on $b$
such that $\|d\sigma_{f}\big{|}_{\tau}\| < \delta$
(Lemma~\ref{strict-contraction}).

Consider the curve segment $\mu_{t} = [(1-t) \mu_{0}+ t\mu_{1}]$ in
$T_{f}$.  It is clear that $\mu_{t}$ is admissible for all $0 \le t
\le 1$.  The existence part of the Main Theorem will follow if one
can show that $\mu_{n} \in T_{f,b}$ where $b >0$ is some constant
depending only on $\kappa$. This is the heart of the whole paper.
Compared with \cite{DH}, the difference here  is that the map $f$ is
of infinite degree and thus the short geodesic argument can not be
applied directly. The key in our proof is Lemma~\ref{key} which says
that the sequence $\lambda_{n}$ is bounded away from $0$ and the
infinity. This lemma allows us to adapt the short geodesic argument
in \cite{DH} to  our situation and prove that $\mu_{n} \in T_{f,b}$
for some constant $b
> 0$ depending only on $\kappa$.  This proves the existence part of
the Main Theorem. The uniqueness part follows easily from the strict
contraction property of the pull back operator when restricted on a
bounded subset of $T_{f}$.

The core argument  used here is different from the one used in
\cite{HSS}, where the authors invented a very brilliant argument,
which is called "limit quadratic differential argument". Using this
argument, they actually  showed that, no matter whether the geometry
is bounded or not, the push forward operator always strictly
decreases the norm of the quadratic differential. An interesting
question is how to adapt the "limit quadratic differential argument"
to deal with the situation in the present paper, or more generally,
to deal with entire functions with finite forward critical orbits.
To my knowledge, the main issue in such adaption is  caused by the
presence of the critical points which does not occur for the
exponential family. More precisely, near the critical point, the
distribution of the mass of the quadratic differential becomes
essentially different after the operation of the push forward
operator, and this may break down the  "limit quadratic differential
argument" presented in \cite{HSS}. Seeking a way to overcome this
problem will be much desirable in the further study.

The organization of the paper is as follows.  We fix a $f \in
\Sigma_{top}^{c}$ throughout the whole paper. In $\S2$, we define
the Teichm\"{u}ller space $T_{f}$ and present a basic background of
the Teich\"{u}ller theory. In $\S3$, we introduce the pull back
operator $\sigma_{f}: T_{f} \to T_{f}$. The contents in both $\S2$
and $\S3$ are quite standard, see \cite{DH}, \cite{HSS} and
\cite{ZJ}. In particular, the presentation in these two sections
follows almost the same line as in \cite{ZJ}.  We included them here
just for the completeness of the proof and the readers' convenience.
In $\S4$, we prove that the puck back operator will produce a
sequence of complex structures $\mu_{n}$ and a sequence of complex
numbers $\lambda_{n}$. In $\S5$, we prove that the sequence
$\lambda_{n}$ is bounded away from the zero and the infinity. This
is the key lemma of the whole paper. In $\S6$, we prove the bounded
geometry of $\mu_{n}$ by adapting the short geodesic argument in
\cite{DH}. In $\S7$, we prove the  Main Theorem.

\section{The Teichm\"{u}ller space $T_{f}$}
Let $f \in \Sigma_{top}^{c}$ and be fixed throughout the following.
In this section, we will define the Teichm\"{u}ller space and lay
the foundation of the Teich\"{u}ller theory which suffices for our
later use.

For more detailed knowledge about the Teichm\"{u}ller theory, we
refer the reader to \cite{H} and \cite{Ga}.

\begin{definition}\label{Teichmuller space}
 {\rm The Teichm\"uller space $T_{f}$
is the Teichm\"uller space modeled on $({\Bbb C},  P_{f} )$.
}
\end{definition}

The Teichm\"{u}ller space $T_{f}$  can be constructed as the space
of all the Beltrami coefficients defined  on ${\Bbb C}$ module the
following equivalent relation: let $\mu$ and $\nu$ be two Beltrami
coefficients defined on ${\Bbb C}$ and let
$$
\phi_{\mu}: {\Bbb C} \to S \hbox{ and } \phi_{\nu}: {\Bbb C}\to R
$$
be two quasiconformal homeomorphisms which solve the Beltrami
equations given by $\mu$ and $\nu$, respectively. We say $\mu$ and
$\nu$ are equivalent to each other if there exists a holomorphic
isomorphism $h: R \to S$ such that the map $\phi_{\mu}$ and $h\circ
\phi_{\nu}$ are isotopic to each other rel $P_{f}$, that is, there
is a continuous family of quasiconformal homeomorphisms $g_{t}:{\Bbb
C} \to S, \:0\le t \le 1$, such that
\begin{itemize}
\item[1.] $g_{0} = \phi_{\mu}$,
\item[2.] $g_{1} = h\circ \phi_{\nu}$,
\item[3.] $g_{t}(z) = \phi_{\mu}(z) = (h\circ \phi_{\nu})(z)$ for all $0 \le t \le
1$ and $z \in P_{f}$.
\end{itemize}
In the following we use  $[\mu]$ to denote the element in $T_{f}$
represented by $\mu$.

Now let us give a brief description of the relative background about
the Teichm\"{u}ller space $T_{f}$.

Let $M({\Bbb C})$ denote the space of all the measurable Beltrami
differentials $\mu(z) \frac{d\overline{z}}{dz}$ on ${\Bbb C}$ with $
\|\mu\|_{\infty}  < \infty$. Then $M({\Bbb C})$ has a natural
complex analytic structure, and moreover,  it  is a Banach analytic
manifold with respect to the norm $\|\cdot\|_{\infty}$. Let $B({\Bbb
C}) \subset M({\Bbb C} )$ denote the unit ball. Then $B({\Bbb C} )$
consists of all the Beltrami coefficients on ${\Bbb C} $. Let
$$
P: B({\Bbb C} ) \to T_{f}
$$
be the projection map given by $\mu \mapsto [\mu]$.

\begin{lemma}[See Chapter 6 of \cite{H}]
\label{complex-analytic-structure} There exists a unique complex
analytic structure on $T_{f}$ such that with respect to this
structure, the map $P$ is complex analytic, and moreover,  the map
$P$ is a holomorphic split submersion.
\end{lemma}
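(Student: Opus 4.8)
The plan is to follow the classical Ahlfors--Bers construction, as carried out in Chapter~6 of \cite{H}, specialized here to the finitely-marked sphere $(\widehat{\Bbb C},\, P_{f}\cup\{\infty\})$; in particular, since $P_{f}$ is finite, $T_{f}$ will be a finite-dimensional complex manifold. The reduction is to the single nontrivial point: \emph{the projection $P$ admits a local holomorphic section near every point of $T_{f}$}. Granting this, $B({\Bbb C})$ is an open subset of the Banach space $M({\Bbb C})$, hence a Banach analytic manifold, and a quotient of a Banach analytic manifold by a holomorphic submersion with local holomorphic sections carries a unique complex structure making the projection a holomorphic split submersion.

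\textbf{Step 1 (Holomorphic dependence on $\mu$).} The first input is the Ahlfors--Bers theorem: if $\mu\in B({\Bbb C})$ and $w^{\mu}:{\Bbb C}\to{\Bbb C}$ is the quasiconformal solution of the Beltrami equation given by $\mu$ normalized to fix $0$, $\pi$, $\infty$, then for each fixed $z$ the map $\mu\mapsto w^{\mu}(z)$ is holomorphic on $B({\Bbb C})$, and more generally the Schwarzian derivative of $w^{\mu}$, restricted to any domain on which $\mu$ vanishes, depends holomorphically on $\mu$. In particular $\mu\mapsto w^{\mu}(p)$ is holomorphic for each $p\in P_{f}$.

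\textbf{Step 2 (The complex structure via the Bers embedding).} Because $P_{f}\cup\{\infty\}$ has at least three points, the mirror surface $X^{*}=\widehat{\Bbb C}\setminus(P_{f}\cup\{\infty\})$ is hyperbolic; let $\rho$ be its hyperbolic metric and $Q(X^{*})$ the finite-dimensional space of holomorphic quadratic differentials on $X^{*}$ with at most simple poles at the punctures. Doubling across the ideal boundary, extending $\mu\in B({\Bbb C})$ by $0$, and taking the Schwarzian of the resulting univalent map on $X^{*}$ gives a map $B({\Bbb C})\to Q(X^{*})$ that factors through $P$ and descends, by the classical theory, to an injection $\beta:T_{f}\hookrightarrow Q(X^{*})$ with open image. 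Declaring $\beta$ a biholomorphism onto its image defines the complex structure on $T_{f}$, and by Step~1 the composite $\beta\circ P$ is holomorphic, so $P$ is holomorphic.

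\textbf{Step 3 (Local sections, splitting, uniqueness).} To build a local holomorphic section near $\tau_{0}=[\mu_{0}]$, precompose with $w^{\mu_{0}}$: the right-translation $\nu\mapsto$ (Beltrami coefficient of $w^{\nu}\circ(w^{\mu_{0}})^{-1}$) is a fiber-preserving biholomorphism of $B({\Bbb C})$ carrying $\tau_{0}$ to the base point, so it suffices to treat $\mu_{0}=0$. There one uses the Ahlfors--Weill reflection: for $q\in Q(X^{*})$ of small norm the Beltrami coefficient $\mu[q]=-\frac{1}{2}\,\rho^{-2}\,\overline{q}$ (pulled back to ${\Bbb C}$) depends linearly, hence holomorphically, on $q$ and satisfies $\beta([\mu[q]])=q$; thus $q\mapsto[\mu[q]]$ transported by $\beta^{-1}$ is a local holomorphic section $s$ of $P$ with $P\circ s=\mathrm{id}$, and $ds$ splits $\ker dP$. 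Finally, if two complex structures on $T_{f}$ both make $P$ holomorphic, a local section for one composed with $P$ for the other realizes $\mathrm{id}_{T_{f}}$ as holomorphic both ways, giving uniqueness. The one genuinely analytic obstacle is the holomorphic (in fact linear) dependence of the Ahlfors--Weill coefficient on $q$ together with the check that it is truly a section of $\beta$; as this is done in full in Chapter~6 of \cite{H}, we simply invoke it.
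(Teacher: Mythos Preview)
The paper does not supply a proof of this lemma at all: it is stated as a citation (``See Chapter~6 of \cite{H}'') and then used as a black box. Your outline of the Ahlfors--Bers/Bers embedding construction with Ahlfors--Weill sections is precisely the classical argument carried out in that reference, so in substance you and the paper agree --- you have simply written out what the paper chose to cite.

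One small terminological wobble: for the finitely-punctured sphere $(\widehat{\Bbb C},P_f\cup\{\infty\})$ there is no ``ideal boundary'' to double across; the Bers embedding here is usually set up by passing to the Fuchsian universal cover of $\widehat{\Bbb C}\setminus(P_f\cup\{\infty\})$, extending the lifted Beltrami coefficient by zero to the lower half-plane, and taking the Schwarzian there. This does not affect the logic of your argument (and you ultimately invoke \cite{H} anyway), but if you keep the sketch you should rephrase Step~2 accordingly.
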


Let $\mu$ be a Beltrami coefficient defined on ${\Bbb C}$. Let
$$\phi_{\mu}: {\Bbb C} \to {\Bbb C} $$ be a
quasiconformal homeomorphism which solves the Beltrami equation
given by $\mu$. Let
$$
{\rm M}_{\mu} = \{\xi(z) \frac{d\overline{z}}{dz} \: \big{|}\:
\xi(z) \hbox{ is measurable and } \|\xi\|_{\infty} < \infty\} $$  be
the linear space of all the Beltrami differentials  defined on
${\Bbb C} $. Let
$$
{\rm A}_{\mu} = \{q(z) dz^{2} \: \big{|}\: q(z) \hbox{ is
holomorphic and } \int_{{\Bbb C} \setminus \phi_{\mu}(P_{f}) }
|q(z)| dz \wedge d \overline{z} < \infty\}
$$
be the linear space of all the integrable holomorphic quadratic
differentials  defined on ${\Bbb C} \setminus \phi_{\mu}(P_{f}) $.
It is easy to see that any $q \in {\rm A}_{\mu}$ can have only
simple poles.

A Beltrami differential $\xi(z) \frac{d\overline{z}}{dz} \in {\rm
M}_{\mu}$ is called $\emph{infinitesimally trivial}$ if
$$
\int_{{\Bbb C} \setminus \phi_{\mu}(P_{f}) } \xi(z) q(z) |dz|^{2} =
0
$$
holds for all $q(z)dz^{2} \in {\rm A}_{\mu}$.

Let ${\rm N}_{\mu} \subset {\rm M}_{\mu}$ be the subspace of all the
$\emph{infinitesimally trivial}$ Beltrami differentials. Then the
tangent space of $T_{f}$ at $[\mu]$ is isomorphic to the quotient
space ${\rm M}_{\mu}/{\rm N}_{\mu}$.

Let $\mu$ be a Beltrami coefficient defined on ${\Bbb C} $. Let
$\xi$ be a tangent vector of $T_{f}$ at $[\mu]$ which is identified
with a Beltrami differential $\xi(z) \frac{d\overline{z}}{dz}$
defined on ${\Bbb C} $.

\begin{definition}\label{Tech-norm}{\rm
The Teichm\"{u}ller norm of the tangent vector $\xi $ is defined to
be
$$
\|\xi\| = \sup \: \Big |\int_{{\Bbb C} \setminus \phi_{\mu}(P_{f}) }
q(z) \xi(z) |dz|^{2} \Big|,
$$ where the $\sup$ is taken over all $q(z) dz^{2} \in {\rm
A}_{\mu}$ with $ \int_{\phi_{\mu}({\Bbb C} \setminus P_{f})} |q(z)|
|dz|^{2} = 1. $}
\end{definition}

\begin{definition}\label{Tech-metric}{\rm
Let $[\mu], [\nu] \in T_{f}$. The Teichm\"{u}ller distance
$d_{T}([\mu], [\nu])$ is defined to be
$$
\frac{1}{2} \inf \log K(\phi_{\mu'} \circ \phi_{\nu'}^{-1})
$$
where $\phi_{\mu'}$ and $\phi_{\nu'}$ are quasi-conformal mappings
with Beltrami coefficients $\mu'$ and $\nu'$ and the inf is taken
over all $\mu'$ and $\nu'$ in the same Teichm\"{u}ller classes as
$\mu$ and $\nu$, respectively.}
\end{definition}

\begin{lemma}[see Chapter 5 of
\cite{Ga}]\label{infinitesimal-metric} Let $\mu$ and $\nu$ be two
Beltrami coefficients defined on ${\Bbb C} $. Then
$$
d_{T}([\mu], [\nu]) = \inf \int_{0}^{1} \|\tau'(t)\| dt
$$
where inf is taken over all the piecewise smooth curves $\tau(t)$ in
$T_{f}$ such that $\tau(0) = [\mu]$ and $\tau(1) = [\nu]$.
\end{lemma}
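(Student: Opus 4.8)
The statement is the classical identification of the Teichm\"uller metric with the integrated form of the infinitesimal Finsler metric of Definition~\ref{Tech-norm}, and the plan is to prove the two inequalities $d_{T}([\mu],[\nu])\le\inf_{\tau}\int_{0}^{1}\|\tau'(t)\|\,dt$ and the reverse separately. It is worth recording first that we are in a finite--dimensional setting: every critical point of $f$ is sent to one of the two critical values $\pm v$ with $v=f(\pi/2)$, periodicity of one critical point forces $\pm v$ onto periodic cycles, so $P_{f}=\bigcup_{j\ge 0}\{\pm f^{j}(v)\}$ is finite and each $X_{\mu}:={\Bbb C}\setminus\phi_{\mu}(P_{f})$ is a surface of finite analytic type; in particular Teichm\"uller's existence and uniqueness theorems are available.

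\textbf{Step 1 (the bound $\le$).} Here I would show that every piecewise smooth curve $\tau:[0,1]\to T_{f}$ satisfies $d_{T}(\tau(0),\tau(1))\le\int_{0}^{1}\|\tau'(t)\|\,dt$; the desired inequality is then immediate upon taking the infimum. At a point $t$ where $\tau$ is smooth, let $X_{t}$ be a surface representing $\tau(t)$; by the infinitesimal form of Teichm\"uller's theorem (Hamilton--Krushkal, Reich--Strebel) the tangent vector $\tau'(t)\in {\rm M}_{\mu_{t}}/{\rm N}_{\mu_{t}}$ has an extremal representative $\xi_{t}$ on $X_{t}$ with $\|\xi_{t}\|_{\infty}=\|\tau'(t)\|$. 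For $s$ close to $t$ the point $[(s-t)\xi_{t}]$ (the surface $X_{t}$ with complex structure $(s-t)\xi_{t}$) lies on a curve through $\tau(t)$ with derivative $\tau'(t)$, hence differs from $\tau(s)$ by $o(|s-t|)$ in $d_{T}$; combining with $d_{T}(\tau(t),[(s-t)\xi_{t}])\le\frac12\log\frac{1+|s-t|\,\|\tau'(t)\|}{1-|s-t|\,\|\tau'(t)\|}$ (Definition~\ref{Tech-metric}) gives $d_{T}(\tau(t),\tau(s))\le|s-t|\,\|\tau'(t)\|+o(|s-t|)$. Thus $g(t):=d_{T}(\tau(0),\tau(t))$ is Lipschitz with $|g'(t)|\le\|\tau'(t)\|$ at every smoothness point, and $d_{T}(\tau(0),\tau(1))=\int_{0}^{1}g'(t)\,dt\le\int_{0}^{1}\|\tau'(t)\|\,dt$.

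\textbf{Step 2 (the bound $\ge$, via a Teichm\"uller geodesic).} Set $D:=d_{T}([\mu],[\nu])$. By Teichm\"uller's existence theorem the extremal quasiconformal map between $X_{\mu}$ and $X_{\nu}$ in the isotopy class rel $P_{f}$ determined by $[\mu],[\nu]$ is a Teichm\"uller map with Beltrami coefficient $k_{0}\,\overline{q}/|q|$, where $q\,dz^{2}$ is an integrable holomorphic quadratic differential on $X_{\mu}$ (at most simple poles on $P_{f}$) and $k_{0}=\tanh D$. I would then take $\tau^{*}:[0,1]\to T_{f}$ sending $s$ to the point with complex structure $(\tanh sD)\,\overline{q}/|q|$ on $X_{\mu}$, so that $\tau^{*}(0)=[\mu]$, $\tau^{*}(1)=[\nu]$, and $\tau^{*}$ is real--analytic. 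Since the composition of two Teichm\"uller maps built from the same quadratic differential is again a Teichm\"uller map whose dilatation is the product, Teichm\"uller's uniqueness theorem yields $d_{T}(\tau^{*}(0),\tau^{*}(s))=sD$ for all $s$; and a short variational computation (using $k(s)=\tanh sD$, whence $k'(s)=D(1-k(s)^{2})$) identifies the velocity $\tau^{*\prime}(s)$, transported to $X_{\tau^{*}(s)}$, with $D\,\overline{q_{s}}/|q_{s}|$ for the induced quadratic differential $q_{s}$, so $\|\tau^{*\prime}(s)\|\le D$ and $\int_{0}^{1}\|\tau^{*\prime}(s)\|\,ds\le D$. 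Applying Step 1 to $\tau^{*}$ gives the opposite inequality, hence $\int_{0}^{1}\|\tau^{*\prime}\|\,ds=D$ and $\inf_{\tau}\int_{0}^{1}\|\tau'\|\,dt\le D=d_{T}([\mu],[\nu])$; with Step 1 this is the claimed equality.

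\textbf{Expected main obstacle.} The one non--formal point is the sharp local estimate underlying Step 1: that the infinitesimal norm of Definition~\ref{Tech-norm}, defined as a supremum over integrable holomorphic quadratic differentials, coincides with the infimum of the $L^{\infty}$--norms of Beltrami differentials representing the tangent class, the infimum being attained. This infinitesimal extremality --- the linearization of Teichm\"uller's theorem --- is exactly what lets one pass from the duality pairing with quadratic differentials to genuine quasiconformal deformations of almost matching dilatation; the companion input in Step 2 is Teichm\"uller's existence and uniqueness theorems, which present no difficulty here because $P_{f}$ is finite. Everything else --- submultiplicativity of dilatations under composition, integration of Lipschitz functions, and the variational formula along a Teichm\"uller line --- is routine, and is carried out in Chapter~5 of \cite{Ga}.
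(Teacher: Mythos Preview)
The paper does not supply its own proof of this lemma: it is stated with the attribution ``see Chapter~5 of \cite{Ga}'' and no argument is given, the result being treated as a standard fact of Teichm\"uller theory. Your proposal is therefore not competing with an in-paper proof but is rather a sketch of the classical argument that the citation points to, and as such it is essentially correct: the two-inequality strategy, with Step~1 relying on the Hamilton--Krushkal/Reich--Strebel characterization of the infinitesimal norm and Step~2 exhibiting the Teichm\"uller geodesic as a length-minimizing path, is exactly the content of Gardiner's Chapter~5. Your preliminary observation that $P_{f}$ is finite (so that the Teichm\"uller space is finite-dimensional and Teichm\"uller's existence and uniqueness theorems apply without qualification) is appropriate and matches the paper's standing hypothesis that $f\in\Sigma_{top}^{c}$ has a periodic critical point.
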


\section{The pull back operator $\sigma_{f}$}
Since $P_{f}$ is a finite set, we may assume that $f$ is a
quasiregular map throughout the following.

Remind that for a Beltrami coefficient $\mu$ defined on ${\Bbb C}$,
the pull back of $\mu$ by $f$, which is denoted by $f^{*}(\mu)$,  is
defined to be
\begin{equation}\label{pull-back-coefficients}
(f^{*}\mu)(z) = \frac{\mu_f(z) + \mu(f(z)) \theta(z)}{1 +
\overline{\mu_f (z)} \mu(f(z)) \theta(z)}
\end{equation}
where $\theta(z)$ = $\overline{f_{z}}/f_{z}$ and $\mu_{f}(z) =
f_{\bar{z}} /f_{z}$. It is important to note that if $\mu$ depends
complex analytically on $t$, then so does $f^{*}(\mu)$.

\begin{lemma}\label{pull-back}
The map $f^{*}$ induces a complex analytic operator $\sigma_{f}:
T_{f} \to T_{f}$.
\end{lemma}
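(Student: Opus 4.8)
The plan is the standard Thurston pull-back construction (see \cite{DH}, \cite{HSS}, \cite{ZJ}): first show that $f^{*}$ operates on the ball $B(\Bbb C)$ of Beltrami coefficients, then that it descends to a well-defined self-map $\sigma_{f}$ of $T_{f}$, and finally that $\sigma_{f}$ is complex analytic. Since $f$ is quasiregular, $\|\mu_{f}\|_{\infty}=k<1$, and reading \eqref{pull-back-coefficients} as the cocycle formula for the complex dilatation of a composition yields
\[
\|f^{*}\mu\|_{\infty} \leq \frac{k+\|\mu\|_{\infty}}{1+k\|\mu\|_{\infty}} < 1 ,
\]
so $f^{*}$ maps $B(\Bbb C)$ into itself; concretely, if $\phi_{\mu}$ solves the Beltrami equation for $\mu$ then $\phi_{\mu}\circ f$ is quasiregular with complex dilatation exactly $f^{*}\mu$. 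Hence $\phi_{\mu}\circ f$ and the normalized solution $\phi_{f^{*}\mu}$ satisfy the same Beltrami equation, so $F_{\mu}:=(\phi_{\mu}\circ f)\circ\phi_{f^{*}\mu}^{-1}$ is $1$-quasiconformal, i.e.\ a holomorphic branched self-covering of $\Bbb C$, and
\[
\phi_{\mu}\circ f = F_{\mu}\circ\phi_{f^{*}\mu} .
\]
This identity is the backbone of the argument (it is also what later produces the numbers $\lambda_{n}$).

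For the descent, suppose $[\mu]=[\nu]$. Normalize every quasiconformal map below to fix $0$ and $\pi$ (hence $\infty$); then the holomorphic isomorphism in the equivalence relation defining $T_{f}$ is forced to be the identity, so there is an isotopy $g_{t}$, $0\le t\le 1$, rel $P_{f}$ with $g_{0}=\phi_{\mu}$ and $g_{1}=\phi_{\nu}$. Let $\rho_{t}$ be the complex dilatation of $g_{t}$ and set $\nu_{t}:=f^{*}\rho_{t}$, a continuous path with $\nu_{0}=f^{*}\mu$ and $\nu_{1}=f^{*}\nu$. Because $f(\Omega_{f}\cup P_{f})\subseteq P_{f}$, every $q\in f^{-1}(P_{f})$ satisfies $f(q)\in P_{f}$, whence $g_{t}\circ f(q)=g_{0}\circ f(q)$ is independent of $t$; that is, $g_{t}\circ f\equiv\phi_{\mu}\circ f$ on $f^{-1}(P_{f})\supseteq P_{f}$. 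Writing $g_{t}\circ f=H_{t}\circ\phi_{\nu_{t}}$ with $H_{t}$ holomorphic and $\phi_{\nu_{t}}$ the normalized solution of $\nu_{t}$ (as in the first step), one checks, exactly as in \cite{DH}, that $t\mapsto\phi_{\nu_{t}}$ is an isotopy rel $f^{-1}(P_{f})$, hence a fortiori rel $P_{f}$; therefore $[f^{*}\mu]=[\nu_{0}]=[\nu_{1}]=[f^{*}\nu]$ and $\sigma_{f}$ is well defined. Equivalently, and this is the most transparent way to organize it, $f$ induces by pull-back of complex structures a biholomorphism of $T_{f}$ onto the Teichm\"uller space modeled on $(\Bbb C, f^{-1}(P_{f}))$, and $\sigma_{f}$ is that biholomorphism followed by the natural holomorphic map forgetting the marked points of $f^{-1}(P_{f})\setminus P_{f}$.

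For complex analyticity, recall from the remark after \eqref{pull-back-coefficients} that $\mu\mapsto f^{*}\mu$ is complex analytic on $B(\Bbb C)$. Since $P:B(\Bbb C)\to T_{f}$ is a holomorphic split submersion (Lemma~\ref{complex-analytic-structure}) it admits local holomorphic sections $s$, so near any point of $T_{f}$ one may write $\sigma_{f}=P\circ f^{*}\circ s$, a composition of complex analytic maps; hence $\sigma_{f}$ is complex analytic. The one point that is not pure bookkeeping is the claim in the descent step that $\phi_{\nu_{t}}$ is an isotopy rel $f^{-1}(P_{f})$ --- equivalently, that forgetting the infinitely many extra marked points of $f^{-1}(P_{f})\setminus P_{f}$ is legitimate. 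This is where the infinite degree of $f$ enters; I expect it to be the main obstacle, and it is dealt with as for the exponential family in \cite{HSS} (and as in \cite{ZJ}), using that $f^{-1}(P_{f})$ is discrete in $\Bbb C$ and accumulates only at the puncture at infinity. Everything else follows the routine of Thurston's construction in \cite{DH}.
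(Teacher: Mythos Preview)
Your argument is correct and follows the same route as the paper: well-definedness from the forward invariance of $P_f$ together with $f(\Omega_f)\subset P_f$, and analyticity from the analyticity of $f^*$ on $B(\Bbb C)$ combined with the holomorphic split submersion property of $P$ (so that locally $\sigma_f=P\circ f^*\circ s$). The paper's own proof is extremely terse---it asserts the well-definedness in one sentence and does not spell out the isotopy lifting or mention the infinite-degree issue you flag---so your write-up is a more detailed version of the same standard Thurston construction rather than a different approach.
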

\begin{proof}
Since $P_{f}$ is forward invariant and contains all the critical
values, it follows  that the map $\sigma_{f}$ is well defined. Note
that  by (\ref{pull-back-coefficients}) the map
$$f^{*}: B({\Bbb C}) \to B({\Bbb C} )$$
is  complex analytic. Since  by
Lemma~\ref{complex-analytic-structure} the projection map
$$
P: B({\Bbb C} ) \to T_{f}
$$
is a holomorphic  split submersion, it follows that $\sigma_{f}$ is
analytic also. This completes the proof of the lemma.
\end{proof}

Let $\phi_{\mu}, \phi_{\tilde{\mu}}: {\Bbb C} \to {\Bbb C}$ denote
the quasiconformal homeomorphisms which fix $0$, $\pi$, and the
infinity and which solve the Beltrami equations given by $\mu$ and
$\tilde{\mu}$, respectively. Let  $$g = \phi_{\mu} \circ f \circ
\phi_{\tilde{\mu}}^{-1}.$$ It is clear that $g$ is an entire
function and the following diagram commutes.
$$
\begin{array}{ccc}
      ({\Bbb C}, P_{f})& {\buildrel
\phi_{\tilde{\mu}} \over \longrightarrow} &({\Bbb C}, \phi_{\tilde{\mu}}(P_{f}))\\
            \downarrow f &                                            &\downarrow
            g\\
            ({\Bbb C}, P_{f})& {\buildrel \phi_{\mu} \over \longrightarrow} & ({\Bbb C},\phi_{\mu}(P_{f}))
\end{array}
$$

In the next section we will show that $g(z) = \lambda \sin(z)$ for
some $\lambda \ne 0$.  Now suppose that $\xi$ is a tangent vector of
$T_{f}$ at $\tau = [\mu]$. This means that there is a smooth curve
of Beltrami coefficients $\gamma(t)$ defined on ${\Bbb C} \setminus
P_{f}$, such that $\gamma(0) = \mu$ and
\begin{equation}\label{tangent-vector}
\xi = \frac{d}{dt}\bigg{|}_{t=0}\mu_{\phi_{\gamma(t)} \circ
\phi_{\mu}^{-1}}
\end{equation}

Let $d \sigma_{f}\big{|}_{\tau}$ denote the tangent map of
$\sigma_{f}$ at $\tau$. Let $\tilde{\xi} = d
\sigma_{f}\big{|}_{\tau} (\xi)$.
\begin{lemma}\label{derivaltive-formula}
Let $\xi$ and $\tilde{\xi}$ be as above. Then
\begin{equation}\label{derivative}
\tilde{\xi}(w) = \xi(g(w)) \frac{\overline{g'(w)}}{g'(w)}.
\end{equation}
\end{lemma}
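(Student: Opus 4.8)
The plan is to prove the transformation rule for the derivative of the pull-back operator by tracking how the curve of Beltrami coefficients defining the tangent vector $\xi$ is transported through the commutative diagram relating $f$ and $g = \phi_\mu \circ f \circ \phi_{\tilde\mu}^{-1}$. First I would fix a smooth curve $\gamma(t)$ of Beltrami coefficients with $\gamma(0) = \mu$ representing $\xi$ as in (\ref{tangent-vector}), so that $\xi = \frac{d}{dt}\big|_{t=0}\, \mu_{\phi_{\gamma(t)}\circ\phi_\mu^{-1}}$. Applying $\sigma_f$ amounts to pulling back $\gamma(t)$ by $f$; the point is that $f^*\gamma(t)$ is represented, after normalization, by the curve of maps $\phi_{\tilde\mu}^{-1}\circ(\text{stuff})$, and one checks that the base point $\sigma_f(\tau)$ corresponds to $f^*\mu$ with uniformizing map related to $\phi_{\tilde\mu}$. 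Concretely, since the diagram commutes, $\phi_{\gamma(t)}\circ f$ solves the Beltrami equation for $f^*\gamma(t)$, and comparing with $\phi_{\tilde\mu}$ one finds $\mu_{\phi_{\gamma(t)}\circ f \circ \phi_{\tilde\mu}^{-1}}$ is the representative whose $t$-derivative at $0$ is $\tilde\xi$.

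The key computational step is the chain rule for Beltrami coefficients under composition. For the inner variable $w$ (coordinate on the target $\phi_{\tilde\mu}(\mathbb{C})$), we have $\phi_{\gamma(t)}\circ f\circ\phi_{\tilde\mu}^{-1} = h_t \circ g$ where $h_t = \phi_{\gamma(t)}\circ\phi_\mu^{-1}$ is a curve of quasiconformal maps with $h_0 = \mathrm{id}$ and $\frac{d}{dt}\big|_{t=0}\mu_{h_t} = \xi$. Now I would apply the standard composition formula for Beltrami coefficients: the Beltrami coefficient of $h_t \circ g$ at a point $w$ is obtained from that of $g$ (which is $0$, since $g$ is holomorphic) combined with the pull-back of $\mu_{h_t}$ evaluated at $g(w)$, twisted by the factor $\overline{g'(w)}/g'(w)$. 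Since $g$ is holomorphic, $\mu_{h_t\circ g}(w) = \mu_{h_t}(g(w))\,\overline{g'(w)}/g'(w)$ exactly. Differentiating at $t=0$ and using that $\frac{d}{dt}\big|_{t=0}\mu_{h_t} = \xi$ gives $\tilde\xi(w) = \xi(g(w))\,\overline{g'(w)}/g'(w)$, which is (\ref{derivative}).

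The main obstacle I expect is purely bookkeeping: making sure the normalizations (which qc maps fix $0,\pi,\infty$) are handled consistently so that the representative of $\sigma_f(\tau)$ is genuinely $f^*\mu$ with base uniformization $\phi_{\tilde\mu}$, and that the tangent vector $\tilde\xi$ is read off with respect to the correct base point on the $w$-side rather than the $z$-side. One must verify that replacing $\phi_{f^*\gamma(t)}$ by $\phi_{\gamma(t)}\circ f$ (post-composed by a suitable automorphism to restore normalization) does not change the class in $T_f$ and contributes nothing to the $t$-derivative — this follows because automorphisms of $\mathbb{C}$ fixing $0,\pi,\infty$ form a trivial group, or more carefully because any discrepancy is a conformal map and hence drops out of the Beltrami coefficient. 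Once this is settled, the holomorphy of $g$ makes the composition formula collapse to the clean identity stated, with no error terms. I would also note explicitly that $\mu_f$ enters only through the definition of $f^*$ on coefficients and disappears at the level of $T_f$, which is why the final formula involves only $g$ and not $f$ directly.
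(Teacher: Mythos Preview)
Your proposal is correct and follows essentially the same route as the paper: write $\phi_{\gamma(t)}\circ f\circ\phi_{\tilde\mu}^{-1} = h_t\circ g$ with $h_t=\phi_{\gamma(t)}\circ\phi_\mu^{-1}$, use holomorphy of $g$ to get $\mu_{h_t\circ g}(w)=\mu_{h_t}(g(w))\,\overline{g'(w)}/g'(w)$, and differentiate at $t=0$. The paper simply asserts the first identity for $\tilde\xi$ without the normalization discussion you anticipate, but your extra bookkeeping is harmless and the argument is the same.
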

\begin{proof}
Note that
$$
\tilde{\xi} = \frac{d}{dt}\bigg{|}_{t=0}\mu_{\phi_{\gamma(t)} \circ
f\circ  \phi_{\tilde{\mu}}^{-1}} =
\frac{d}{dt}\bigg{|}_{t=0}\mu_{\phi_{\gamma(t)}\circ \phi_{\mu}^{-1}
\circ \phi_{\mu} \circ f\circ \phi_{\tilde{\mu}}^{-1}} =
\frac{d}{dt}\bigg{|}_{t=0}\mu_{\phi_{\gamma(t)}\circ \phi_{\mu}^{-1}
\circ g}.
$$
Since $g$ is an entire function, by (\ref{pull-back-coefficients})
we have
$$
\mu_{\phi_{\gamma(t)}\circ \phi_{\mu}^{-1} \circ g} (w) =
\mu_{\phi_{\gamma(t)}\circ \phi_{\mu}^{-1}}(g(w))
\frac{\overline{g'(w)}}{g'(w)}
$$
The lemma  then follows from (\ref{tangent-vector}).
\end{proof}

Let $\tilde{q}=\tilde{q}(w) dw^2$ be a non-zero integrable
holomorphic quadratic differential defined on ${\Bbb C}\setminus
\phi_{\tilde{\mu}}(P_{f})$. Define
\begin{equation}\label{push-forward}
q(z)  = \sum_{g(w) = z} \frac{\tilde{q}(w)}{[g'(w)]^{2}}.
\end{equation}
It is easy to see that $q = q(z)dz^{2}$ is a holomorphic quadratic
differential defined on ${\Bbb C}\setminus \phi_{\mu}(P_{f})$. We
call $q$ the push forward of $\tilde{q}$.
\begin{proposition}\label{keyu} For $q$ and $\tilde{q}$ given as
above, we have
$$ \int_{{\Bbb C}\setminus \phi_{\mu}(P_{f})}|q(z)| \:|dz |^{2}
 \le  \int_{{\Bbb C}\setminus
\phi_{\tilde{\mu}}(P_{f})}| \tilde{q}(w)| \:|dw|^{2} .
$$
\end{proposition}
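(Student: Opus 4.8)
The plan is to compute the integral of $|q|$ over the target by pushing the computation onto the source via the covering map $g$, exploiting the key fact that $g$ maps a neighborhood of each point of ${\Bbb C}\setminus\phi_{\mu}(P_f)$ properly, so that the fiber sum in \eqref{push-forward} makes sense and the change of variables is legitimate. First I would observe that since $g = \phi_{\mu}\circ f\circ\phi_{\tilde\mu}^{-1}$ is an entire map whose critical values all lie in $\phi_{\mu}(P_f)$, the restriction $g\colon g^{-1}({\Bbb C}\setminus\phi_{\mu}(P_f))\to {\Bbb C}\setminus\phi_{\mu}(P_f)$ is an (unbranched, possibly infinite-degree) covering map. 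Hence ${\Bbb C}\setminus\phi_{\mu}(P_f)$ can be covered, up to a set of measure zero, by open sets $U$ over which $g$ is trivial, i.e.\ $g^{-1}(U)$ is a disjoint union of sheets $\{V_j\}$ with $g\colon V_j\to U$ a biholomorphism.

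The main computation is then the pointwise triangle inequality combined with change of variables. On each trivializing set $U$ write, for $z\in U$,
\begin{equation*}
|q(z)| = \Bigl|\sum_{g(w)=z}\frac{\tilde q(w)}{[g'(w)]^2}\Bigr| \le \sum_{j}\frac{|\tilde q(w_j(z))|}{|g'(w_j(z))|^2},
\end{equation*}
where $w_j(z)\in V_j$ is the unique preimage of $z$ in the $j$-th sheet. Integrating over $U$ and using that $|dz|^2 = |g'(w)|^2\,|dw|^2$ under the biholomorphism $w\mapsto g(w)$ on each $V_j$, the factor $|g'(w_j(z))|^2$ cancels and I get
\begin{equation*}
\int_U |q(z)|\,|dz|^2 \le \sum_j \int_{V_j}|\tilde q(w)|\,|dw|^2 = \int_{g^{-1}(U)}|\tilde q(w)|\,|dw|^2.
\end{equation*}
Summing over a countable measurable partition of ${\Bbb C}\setminus\phi_{\mu}(P_f)$ into such pieces (and noting that the complement of the union of trivializing sets, together with $\phi_{\tilde\mu}(P_f)$, has two-dimensional measure zero and so contributes nothing), the left side sums to $\int_{{\Bbb C}\setminus\phi_{\mu}(P_f)}|q(z)|\,|dz|^2$ and the right side sums to at most $\int_{{\Bbb C}\setminus\phi_{\tilde\mu}(P_f)}|\tilde q(w)|\,|dw|^2$, since the sets $g^{-1}(U)$ are pairwise disjoint. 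This is the claimed inequality. (The interchange of sum and integral is justified by Tonelli, all integrands being nonnegative.)

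The main obstacle I anticipate is not the formal inequality but the bookkeeping needed to ensure the fiber sum in \eqref{push-forward} converges and that $q$ is genuinely a well-defined integrable holomorphic quadratic differential on ${\Bbb C}\setminus\phi_{\mu}(P_f)$ in the first place — this is exactly what the inequality, once proved locally, retroactively guarantees, so one must set up the argument to avoid circularity: work first on a fixed trivializing $U$ where only finitely many or countably many sheets appear, establish local uniform convergence and holomorphy of $q$ there via the bound above, and only then pass to the global statement. A secondary point to be careful about is that $g$ has infinite degree, so the number of sheets over $U$ may be infinite; but since the partial sums are dominated by $\int_{g^{-1}(U)}|\tilde q|$, which is finite because $\tilde q$ is integrable, everything goes through. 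I would also remark that equality fails in general precisely because the triangle inequality $|\sum a_j|\le\sum|a_j|$ is strict whenever the terms $\tilde q(w_j)/[g'(w_j)]^2$ are not all aligned in phase — a fact that will be used later to extract strict contraction of $\sigma_f$.
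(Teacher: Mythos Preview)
Your proof is correct and follows essentially the same approach as the paper: apply the triangle inequality to the fiber sum defining $q$, then use the change of variables $|dz|^2 = |g'(w)|^2\,|dw|^2$ to identify the resulting integral with $\int|\tilde q(w)|\,|dw|^2$. The paper writes this down in three displayed lines without the explicit trivialization or the discussion of convergence and well-definedness, so your version is simply a more careful unpacking of the same argument.
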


\begin{proof} By the definition of $\tilde{q}$, we have
$$
\int_{{\Bbb C}\setminus \phi_{\mu}(P_{f})} |q(z)| \:|dz|^{2}
=\int_{{\Bbb C}\setminus \phi_{\mu}(P_{f})} \Bigg| \sum_{g(w)=z}
\frac{\tilde{q}(w)}{[g' (w)]^{2}}\Bigg| \: |dz|^{2}.
$$
Since
$$
\Bigg| \sum_{g(w)=z} \frac{\tilde{q}(w)}{[g' (w)]^{2}}\Bigg| \le
\sum_{g(w)=z}\Bigg|
 \frac{\tilde{q}(w)}{[g' (w)]^{2}}\Bigg|
$$
and
$$
\int_{{\Bbb C}\setminus \phi_{\tilde{\mu}}(P_{f})}| \tilde{q}(w)|
\:|dw|^{2} =\int_{{\Bbb C}\setminus \phi_{{\mu}}(P_{f})}
\sum_{g(w)=z}\Bigg|
 \frac{\tilde{q}(w)}{[g' (w)]^{2}}\Bigg| \:|dz|^{2},
$$
Proposition~\ref{keyu} follows.
\end{proof}
\begin{proposition}\label{key'} We have the following duality of the pairing,
$$
\int_{{\Bbb C}\setminus \phi_{\tilde{\mu}}(P_{f})} \tilde{\xi} (w)
\tilde{q}(w) |dw|^{2} =\int_{{\Bbb C}\setminus \phi_{\mu}(P_{f})}
\xi (z) q(z) |dz|^{2}.$$
\end{proposition}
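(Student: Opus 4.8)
The plan is to compute the left-hand side directly by substituting the definition $\tilde\xi(w) = \xi(g(w))\,\overline{g'(w)}/g'(w)$ from Lemma~\ref{derivaltive-formula}, and then push the integral down through the branched covering $g$ fiber by fiber, exactly the way the mass estimate was organized in the proof of Proposition~\ref{keyu}. First I would write
$$
\int_{{\Bbb C}\setminus \phi_{\tilde{\mu}}(P_{f})} \tilde{\xi}(w)\,\tilde{q}(w)\,|dw|^{2}
= \int_{{\Bbb C}\setminus \phi_{\tilde{\mu}}(P_{f})} \xi(g(w))\,\frac{\overline{g'(w)}}{g'(w)}\,\tilde{q}(w)\,|dw|^{2}.
$$
The key algebraic observation is that the measure $|dw|^2$ transforms under $g$ with the Jacobian $|g'(w)|^2$, so that for a function $h$ on the $z$-plane,
$$
\int_{{\Bbb C}\setminus \phi_{\tilde{\mu}}(P_{f})} h(g(w))\,|g'(w)|^{2}\,|dw|^{2}
= \int_{{\Bbb C}\setminus \phi_{\mu}(P_{f})} \Big(\sum_{g(w)=z} h(z)\Big)\,|dz|^{2},
$$
where the inner sum is over the fiber (counted without multiplicity, the critical points forming a set of measure zero which may be ignored). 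Applying this with $h(z) = \xi(z)$ and writing $\overline{g'(w)}/g'(w) \cdot \tilde q(w) = |g'(w)|^2 \cdot \overline{g'(w)}\,\tilde q(w)/(g'(w)\,|g'(w)|^2) = |g'(w)|^2 \cdot \tilde q(w)/[g'(w)]^2 \cdot \overline{g'(w)}/\overline{g'(w)}$— more cleanly, I would factor the integrand over a fiber as $\xi(z) \cdot \big(\overline{g'(w)}/g'(w)\big)\tilde q(w)$ and observe that $\big(\overline{g'(w)}/g'(w)\big)\tilde q(w) = |g'(w)|^{2}\,\tilde q(w)/[g'(w)]^{2}$.

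Carrying this through, the fiber sum becomes
$$
\sum_{g(w)=z} \xi(z)\,\frac{\overline{g'(w)}}{g'(w)}\,\tilde{q}(w)\,\frac{1}{|g'(w)|^{2}}
= \xi(z)\sum_{g(w)=z}\frac{\tilde{q}(w)}{[g'(w)]^{2}} = \xi(z)\,q(z),
$$
by the very definition of the push forward $q$ in (\ref{push-forward}). Hence after changing variables we land on $\int_{{\Bbb C}\setminus\phi_{\mu}(P_{f})}\xi(z)\,q(z)\,|dz|^{2}$, which is the right-hand side. The point is that pushing $\tilde\xi$ forward dually to pushing $\tilde q$ forward is precisely what makes the pairing invariant; this is the standard adjointness between the pull-back action on Beltrami differentials and the push-forward action on quadratic differentials.

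The only genuine obstacle is justifying the interchange of summation and integration and the change of variables across a branched covering of infinite degree. I would handle this by noting that $\tilde q$ is \emph{integrable}, so by Proposition~\ref{keyu} the majorant series $\sum_{g(w)=z}|\tilde q(w)|/|g'(w)|^{2}$ is integrable on ${\Bbb C}\setminus\phi_{\mu}(P_{f})$, and since $\|\xi\|_{\infty}<\infty$ the integrand $\xi(z)\sum|\tilde q(w)|/|g'(w)|^{2}$ is in $L^{1}$; Fubini--Tonelli then legitimizes all the manipulations above. The branch points of $g$ (the images of the critical set $\Omega_f$) form a discrete, hence null, set, so restricting to the regular part of the covering — where $g$ is locally biholomorphic and the local change-of-variables formula applies — changes none of the integrals. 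With that in place the computation is routine and the proposition follows.
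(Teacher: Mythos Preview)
Your argument is correct and is exactly the computation the paper has in mind: its proof is the one-line remark that the identity follows from (\ref{derivative}), (\ref{push-forward}) and $|dz|^{2}=|g'(w)|^{2}|dw|^{2}$, which is precisely the substitution and fiberwise change of variables you carry out. Your added Fubini--Tonelli justification for the infinite-degree case is a welcome detail the paper omits.
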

\begin{proof}

It follows easily  from (\ref{derivative}), (\ref{push-forward}) and
the fact that $|dz|^{2} = |g'(w)|^{2} |dw|^{2}$.

\end{proof}

As a direct consequence of Propositions \ref{keyu} and \ref{key'},
we have
\begin{corollary}\label{key-6}{\rm Let $\tau \in T_{f}$. Then
$\| d \sigma_{f} \big{|}_{\tau} \| \le 1$. }
\end{corollary}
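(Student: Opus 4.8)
The plan is to read the bound off directly from the two preceding propositions, as the sentence before the statement indicates. Recall from Definition~\ref{Tech-norm} that for a tangent vector $\xi$ based at $\tau = [\mu]$ one has
$$
\|\xi\| = \sup \Big| \int_{{\Bbb C}\setminus \phi_{\mu}(P_{f})} \xi(z) q(z)\, |dz|^{2} \Big|,
$$
the supremum running over all $q = q(z)\,dz^{2} \in {\rm A}_{\mu}$ with $\int_{{\Bbb C}\setminus \phi_{\mu}(P_{f})} |q(z)|\, |dz|^{2} = 1$; and recall that the operator norm in question is $\|d\sigma_{f}\big|_{\tau}\| = \sup \{ \|\tilde{\xi}\| / \|\xi\| : \xi \ne 0 \}$, where $\tilde{\xi} = d\sigma_{f}\big|_{\tau}(\xi)$ is a tangent vector based at $\sigma_{f}(\tau) = [\tilde{\mu}]$. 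So it suffices to prove $\|\tilde{\xi}\| \le \|\xi\|$ for every $\xi$.

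First I would fix an arbitrary integrable holomorphic quadratic differential $\tilde{q} = \tilde{q}(w)\,dw^{2} \in {\rm A}_{\tilde{\mu}}$, normalized by $\int_{{\Bbb C}\setminus \phi_{\tilde{\mu}}(P_{f})} |\tilde{q}(w)|\, |dw|^{2} = 1$, and let $q = q(z)\,dz^{2}$ be its push forward, defined by (\ref{push-forward}). By Proposition~\ref{keyu} one has $\int_{{\Bbb C}\setminus \phi_{\mu}(P_{f})} |q(z)|\, |dz|^{2} \le 1$; in particular $q \in {\rm A}_{\mu}$. By Proposition~\ref{key'} the two pairings coincide,
$$
\int_{{\Bbb C}\setminus \phi_{\tilde{\mu}}(P_{f})} \tilde{\xi}(w) \tilde{q}(w)\, |dw|^{2} = \int_{{\Bbb C}\setminus \phi_{\mu}(P_{f})} \xi(z) q(z)\, |dz|^{2}.
$$
If $\int |q|\,|dz|^{2} = 0$ then $q \equiv 0$ and both sides vanish. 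Otherwise, after rescaling $q$ to have unit $L^{1}$-norm it lies in the class over which the supremum in Definition~\ref{Tech-norm} is taken, and homogeneity of that supremum gives
$$
\Big| \int_{{\Bbb C}\setminus \phi_{\mu}(P_{f})} \xi(z) q(z)\, |dz|^{2} \Big| \le \|\xi\| \int_{{\Bbb C}\setminus \phi_{\mu}(P_{f})} |q(z)|\, |dz|^{2} \le \|\xi\|.
$$
In either case both sides of the displayed identity have modulus at most $\|\xi\|$; taking the supremum over all such $\tilde{q}$ yields $\|\tilde{\xi}\| \le \|\xi\|$, and since $\xi$ was arbitrary, $\|d\sigma_{f}\big|_{\tau}\| \le 1$.

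I do not expect any real obstacle: the entire content lies in Propositions~\ref{keyu} and~\ref{key'}, which themselves rest only on the triangle inequality applied to the sum defining the push forward and on the change of variables $|dz|^{2} = |g'(w)|^{2}\,|dw|^{2}$ together with the derivative formula (\ref{derivative}). The two small points deserving a word are the degenerate case $q \equiv 0$, handled above, and the bookkeeping that $\xi$ and $\tilde{\xi}$ are based at the two (in general distinct) points $[\mu]$ and $[\tilde{\mu}] = \sigma_{f}([\mu])$ of $T_{f}$, so that the norms $\|\xi\|$ and $\|\tilde{\xi}\|$ in the estimate, and the quadratic differentials $q$, $\tilde{q}$ used to compute them, live over the correct punctured planes.
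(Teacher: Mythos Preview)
Your proof is correct and is exactly the argument the paper has in mind: the corollary is stated as a direct consequence of Propositions~\ref{keyu} and~\ref{key'}, and you have spelled out precisely how the duality pairing and the norm inequality for the push forward combine with Definition~\ref{Tech-norm} to give $\|\tilde{\xi}\|\le\|\xi\|$. The attention to the degenerate case $q\equiv 0$ and to the base points $[\mu]$ versus $[\tilde{\mu}]$ is appropriate and fills in the only details the paper leaves implicit.
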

\begin{remark}\label{vv}{\rm
Let $Q_{f} = P_{f} \cup \{0, \pi\}$ and $W_{f} = P_{f} \cup
\{k\pi\:\big{|}\:k\in\Bbb Z\}$.  As in $\S2$, one may define the
Teichm\"{u}ller space modeled on $({\Bbb C}, Q_{f})$ or $({\Bbb C},
W_{f})$. It is not difficult to see that the map $f$ also induces an
analytic pull back operator $\sigma_{f}$ defined on  these two new
Techm\"{u}ller spaces. To simplify the notation, in the following we
use the same notation ${T}_{f}$ to denote the Teichm\"{u}ller space
for each of these  three cases, that is, the Teichm\"{u}ller space
modeled on $({\Bbb C}, X)$ with $X = P_{f}$, $ Q_{f}$ or $W_{f}$.}
\end{remark}

\section{The sequence $\lambda_{n}$}
We need a result of \cite{DS}.
\begin{lemma}[Lemma 1 of \cite{DS}]\label{tel}
Let $g$ be an entire function. If there exist two homeomorphisms
$\phi, \psi: {\Bbb C} \to {\Bbb C}$ such that $\phi(\sin(z)) =
g(\psi(z))$, then $g(z) = a\sin(bz + c) + d$ where $a, b, c$ and $d$
are some constants and $ab \ne 0$.
\end{lemma}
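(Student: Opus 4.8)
The plan is to first extract from the hypothesis the complete topological and singular-value structure of $g$, then to recognize $g$ (after an affine change of coordinate in the target) as a solution of a second order linear ODE whose solution space is spanned by $\sin\circ F$ and $\cos\circ F$ for a suitable entire function $F$, and finally to prove that $F$ must be affine. Throughout I use the classical facts that $\sin$ has no finite asymptotic value and that $\sin^{-1}(\{1,-1\})$ is precisely the critical set $\{\pi/2+k\pi:k\in\Bbb Z\}$ of $\sin$, every critical point being of local degree $2$.

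First I would transport these properties through the plane homeomorphisms, using that local degree is invariant under pre- and post-composition with homeomorphisms and that a homeomorphism of $\Bbb C$ is proper. From $\phi(\sin z)=g(\psi(z))$ one reads off: $g$ is a non-constant branched covering $\Bbb C\to\Bbb C$; its set of critical points is $\Omega_g=\psi(\{\pi/2+k\pi:k\in\Bbb Z\})$, which is non-empty, and the local degree of $g$ at each of these points equals that of $\sin$, namely $2$, so every critical point of $g$ is simple; the critical values of $g$ are exactly $A:=\phi(1)$ and $B:=\phi(-1)$, and $g^{-1}(\{A,B\})=\Omega_g$; finally $g$ has no finite asymptotic value, for if $g(\gamma(t))\to a\in\Bbb C$ along $\gamma(t)\to\infty$, then $\psi^{-1}(\gamma(t))\to\infty$ and $\sin(\psi^{-1}(\gamma(t)))=\phi^{-1}(g(\gamma(t)))\to\phi^{-1}(a)$, making $\phi^{-1}(a)$ a finite asymptotic value of $\sin$, a contradiction. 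Composing with the affine map $T$ with $T(A)=1$, $T(B)=-1$ and setting $h:=T\circ g$, I obtain an entire $h$ with $\Omega_h=h^{-1}(\{1,-1\})\ne\emptyset$, all critical points simple, and no finite asymptotic value.

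Next I would form $H:=(h')^2/(1-h^2)$. At each point of $h^{-1}(\{\pm1\})=\Omega_h$, $h'$ has a simple zero and $1-h^2$ a double zero — precisely because the critical point is simple — so $H$ extends holomorphically and without a zero across such points; elsewhere $h'\ne0$, so $H$ is a nowhere vanishing entire function and hence $H=e^{2\eta}$ for some entire $\eta$. Differentiating $(h')^2=e^{2\eta}(1-h^2)$ and simplifying where $h'\ne0$ gives $h''-\eta'h'+e^{2\eta}h=0$ there, hence on all of $\Bbb C$, both sides being entire. Let $F$ be an entire primitive of $e^{\eta}$, so $F'=e^{\eta}$ is nowhere zero; a direct check shows $\sin\circ F$ and $\cos\circ F$ are entire solutions of $y''-\eta'y'+e^{2\eta}y=0$, and they are linearly independent over $\Bbb C$ since $\sin,\cos$ are and $F$ is non-constant. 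As this equation has entire coefficients, its solution space is two dimensional, so $h=\alpha\cos F+\beta\sin F$ for constants $\alpha,\beta$ not both zero. If $\alpha^2+\beta^2=0$ then $h=\alpha e^{\pm iF}$, which is zero-free with zero-free derivative, forcing $\Omega_h=\emptyset$, a contradiction; hence $\alpha^2+\beta^2\ne0$ and $h=c_1\sin(F+c_2)$ with $c_1\ne0$. Writing $\tilde F:=F+c_2$, we get $h=c_1\,\sin\circ\tilde F$ with $\tilde F$ entire and $\tilde F'=e^{\eta}$ nowhere zero.

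It remains to show $\tilde F$ is affine. It has no critical points, and no finite asymptotic value: if $\tilde F(\gamma(t))\to a$ along $\gamma(t)\to\infty$, then $h(\gamma(t))=c_1\sin(\tilde F(\gamma(t)))\to c_1\sin a$, a finite asymptotic value of $h$, contradicting the second paragraph. An entire function none of whose finite values is critical or asymptotic is a covering map of $\Bbb C$ onto $\Bbb C$, hence a homeomorphism since $\Bbb C$ is simply connected, hence affine; thus $\tilde F(z)=bz+c$ with $b\ne0$. Therefore $h(z)=c_1\sin(bz+c)$ and $g(z)=T^{-1}(h(z))=a\sin(bz+c)+d$ with $a\ne0$, so $ab\ne0$. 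The points I expect to require the most care are the claims in the second paragraph — in particular $g^{-1}(\{A,B\})=\Omega_g$ and the absence of finite asymptotic values, which amount to transporting classical facts about $\sin$ through $\phi$ and $\psi$ — together with the standard but non-trivial fact invoked at the end that an entire function with no finite critical or asymptotic value is affine.
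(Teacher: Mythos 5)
Your proposal is correct, but there is no internal argument to compare it with: the paper does not prove this statement at all --- it is imported as Lemma 1 of \cite{DS} and used as a black box in the proof of Lemma~\ref{ee}. So what you have written is, in effect, an independent self-contained proof. Your route --- transporting the singular-value structure of $\sin$ through $\phi,\psi$ (two distinct critical values $A=\phi(1)$, $B=\phi(-1)$, all critical points of local degree $2$, $g^{-1}(\{A,B\})=\Omega_g\ne\emptyset$, no finite asymptotic value), normalizing by an affine $T$ so the critical values become $\pm1$, observing that $(h')^2/(1-h^2)$ is entire and zero-free, hence of the form $e^{2\eta}$, so that $h$ solves $y''-\eta' y'+e^{2\eta}y=0$ and therefore $h=\alpha\cos F+\beta\sin F=c_1\sin(F+c_2)$ with $F'=e^{\eta}$ nowhere zero, and finally showing $F$ is affine --- is sound; I checked the order count at the simple critical points (numerator and denominator of $(h')^2/(1-h^2)$ both vanish to order exactly $2$), the Wronskian computation giving independence of $\sin F,\cos F$, and the exclusion of the degenerate case $\alpha^2+\beta^2=0$ via $\Omega_h\ne\emptyset$. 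Two inputs deserve an explicit citation rather than a gesture: first, that $\sin$ has no finite asymptotic value, which follows at once from $|\sin(x+iy)|^2=\sin^2x+\sinh^2y$ (the paper itself asserts the corresponding fact for $f_\lambda$); second, the closing step, where ``no finite critical or asymptotic value implies $\tilde F$ is a covering of ${\Bbb C}$ onto ${\Bbb C}$, hence affine'' needs, for surjectivity, Iversen's theorem that an omitted value of a non-constant entire function is an asymptotic value, or equivalently the standard statement that an entire $f$ restricts to a covering $f\colon f^{-1}({\Bbb C}\setminus S_f)\to{\Bbb C}\setminus S_f$ over the complement of its set $S_f$ of finite singular values. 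With those two classical facts quoted, your argument is a complete and arguably more conceptual substitute for the citation to \cite{DS}.
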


Let $\mu(z)$ be a Beltrami coefficient on the complex plane. Recall
that $\mu$ is $\emph{admissible}$ if
$$
\mu(z) = \mu(-z) = \mu(z + \pi).
$$

Let $\mu_{0}$ be an admissible Beltrami coefficient on the complex
plane with $\|\mu_{0}\|_{\infty} = \kappa < 1$. Let $\mu_{n}$ denote
the pull back of $\mu_{0}$ through the iterations $f^{n}$. For $n
\ge 0$, let $\phi_{n}: {\Bbb C} \to {\Bbb C}$ be the quasiconformal
homeomorphism which solves the Beltrami equation given by $\mu_{n}$
and which fixes $0$, $\pi$ and $\infty$. It is clear that for every
$n \ge 0$, there is an entire function $g_{n}(z)$ such that $$
\phi_{n}\circ f(z) = g_{n} \circ \phi_{n+1}(z).
$$
\begin{lemma}\label{ee}
For every $n \ge 0$, there exist constants  $a_{n}, b_{n}, c_{n}$
and $d_{n}$ such that $g_{n}(z) = a_{n} \sin(b_{n}z + c) + d_{n}$
and $a_{n}b_{n} \ne 0$.
\end{lemma}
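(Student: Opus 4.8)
The plan is to obtain the statement from Lemma~\ref{tel}. Each $g_{n}$ is entire, so it suffices to produce, for every $n\ge 0$, plane homeomorphisms $\phi,\psi:{\Bbb C}\to{\Bbb C}$ with $\phi(\sin z)=g_{n}(\psi(z))$; Lemma~\ref{tel} then gives $g_{n}(z)=a_{n}\sin(b_{n}z+c_{n})+d_{n}$ with $a_{n}b_{n}\ne 0$, which is the assertion. I would first reduce this to a statement about $f$ alone: if there are plane homeomorphisms $u,v:{\Bbb C}\to{\Bbb C}$ with $f\circ v=u\circ\sin$, then setting $\phi=\phi_{n}\circ u$ and $\psi=\phi_{n+1}\circ v$ (both plane homeomorphisms) and using $\phi_{n}\circ f=g_{n}\circ\phi_{n+1}$ we get $g_{n}(\psi(z))=g_{n}(\phi_{n+1}(v(z)))=\phi_{n}(f(v(z)))=\phi_{n}(u(\sin z))=\phi(\sin z)$, which is exactly what is needed. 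Note that this single pair $u,v$ serves all $n$ simultaneously.

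So the heart of the matter is the claim that every $f\in\Sigma^{c}_{top}$ is topologically equivalent to $\sin$, i.e.\ $f\circ v=u\circ\sin$ for some plane homeomorphisms $u,v$. To prove this I would exploit conditions (1) and (3) of Definition~\ref{top}. After post-composing $f$ with a linear isomorphism (absorbed into $u$) I may assume the line $L$ of condition (3) is the imaginary axis and that $f$ carries the strip $S$ onto the right half-plane $H$, just as $\sin$ does. Then $f$ and $\sin$ both restrict to the closed strip $\overline{S}=\{\,0\le{\rm Re}\,z\le\pi\,\}$ as proper degree-two branched coverings onto $\overline{H}$, each branched over one interior point — the critical value $f(\pi/2)$, resp.\ $1$ — and each carrying the two boundary lines $\{{\rm Re}\,z=0\}$ and $\{{\rm Re}\,z=\pi\}$ onto $\partial H$. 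Compactifying $\overline{H}$ by the point at infinity and $\overline{S}$ by its two ends (both of which are sent to infinity), these become two degree-two branched coverings of a closed disk, branched over a single interior point and with identical boundary ramification data; such coverings are unique up to homeomorphisms of domain and range, so there is a pair of homeomorphisms $(v_{0},u_{0})$ of $(\overline{S},\overline{H})$ with $f\circ v_{0}=u_{0}\circ\sin$ on $\overline{S}$.

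It remains to glue. Since $\sin(z+\pi)=-\sin z$ and, by condition (1), $f(z+\pi)=-f(z)$, the plane is tiled by the translates $\overline{S}+k\pi$, and on $\overline{S}+k\pi$ the maps $f$ and $\sin$ are determined by their restrictions to $\overline{S}$ via these symmetries (with $-\overline{H}$ the complementary closed half-plane). I would therefore choose $v_{0}$ so that it intertwines the boundary translation $z\mapsto z+\pi$ carrying $\{{\rm Re}\,z=0\}$ onto $\{{\rm Re}\,z=\pi\}$, and $u_{0}$ so that it is odd on $\partial H$ — this is possible because the two branched coverings already intertwine these symmetries — and then define $v$ on $\overline{S}+k\pi$ by $v(z)=v_{0}(z-k\pi)+k\pi$ and $u$ on $-\overline{H}$ by $u(w)=-u_{0}(-w)$. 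One checks that $v$ and $u$ are well-defined homeomorphisms of ${\Bbb C}$ and that $f\circ v=u\circ\sin$ holds on each $\overline{S}+k\pi$, hence everywhere.

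The step I expect to be the main obstacle is precisely this gluing: one must pick the strip-level conjugacy $(v_{0},u_{0})$ so that it respects exactly the translation symmetry on the domain side and the central symmetry on the range side, in such a way that the equivariant extensions match along the lines $\{{\rm Re}\,z=k\pi\}$ and produce genuine homeomorphisms of ${\Bbb C}$; this requires careful bookkeeping of orientations and of how the two edges of $\overline{S}$ lie over $\partial H$. Everything else is the short computation of the first paragraph together with Lemma~\ref{tel}.
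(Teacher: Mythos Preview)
Your proposal is correct and follows the same strategy as the paper: reduce to Lemma~\ref{tel} by exhibiting plane homeomorphisms conjugating $f$ to $\sin$, then compose with $\phi_{n},\phi_{n+1}$. The paper organizes the construction of that conjugacy more economically and thereby sidesteps the gluing you flag as the main obstacle: it first builds the \emph{range}-side homeomorphism $\theta$ globally on ${\Bbb C}$ (an odd homeomorphism sending $L$ to the imaginary axis, $H$ to a half-plane, and the critical values of $f$ to $\pm1$ --- this is elementary and requires no gluing), and only then lifts through $\theta\circ f=\sin\circ\sigma$ to obtain $\sigma$ on the single strip $S$; the boundary relation $\sigma(z+\pi)=\sigma(z)+\pi$ on the imaginary axis is forced by the lift, so $\sigma$ extends periodically without further bookkeeping. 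In your version both $u$ and $v$ must be glued simultaneously, which is doable but, as you note, fussier.
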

\begin{proof}
By Lemma~\ref{tel}, it suffices to prove that there exist a pair of
homeomorphisms $\theta, \sigma: {\Bbb C} \to {\Bbb C}$ such that
$\theta(f(z)) = \sin(\sigma(z))$. Since $f \in \Sigma_{top}^{c}$, it
follows from the definition that $f$ maps the imaginary axis
homeomorphically to a straight line $L$ and maps the strip
$$
S = \{x + iy\:\big{|}\: 0< x < \pi, -\infty < y < \infty\}
$$
two-to-one onto one of the two half planes separated by $L$. Let us
denote this half plane by $H$. Let $\theta:{\Bbb C} \to {\Bbb C}$ be
a homeomorphism which maps $L$ to the imaginary axis and maps $H$ to
the left half plane,  and maps the critical value of $f$ to $\{1,
-1\}$, and moreover, $\theta(-z) = -\theta(z)$.  Then by lifting
$\theta$ through the relation $\theta(f(z)) = \sin(\sigma(z))$ one
can obtain a homeomorphism $\sigma: S \to S$.  It is easy to see
that $\sigma(z) + \pi = \sigma(z + \pi)$ and $\sigma(-z) =
-\sigma(z)$ hold for all $z$ belonging to the imaginary axis.  We
can then periodically extend $\sigma$ to a homeomorphism of the
plane to itself by the relation $\sigma(z + \pi) = \sigma(z) + \pi$.
This completes the proof of Lemma~\ref{ee}.
\end{proof}

\begin{lemma}\label{symm}
For every $n \ge 0$, there is a $\lambda_{n} \ne 0$ such that
$$
\phi_{n}\circ f(z) = g_{n} \circ \phi_{n+1}(z)
$$
where $g_{n}(z) = \lambda_{n}\sin(z)$.
\end{lemma}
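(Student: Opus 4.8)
The plan is to start from the normal form $g_{n}(z)=a_{n}\sin(b_{n}z+c_{n})+d_{n}$ with $a_{n}b_{n}\neq 0$ furnished by Lemma~\ref{ee}, and to pin down $b_{n}=\pm1$, $c_{n}\in\pi{\Bbb Z}$ and $d_{n}=0$ (after which a sign change puts $g_{n}$ in the desired form $\lambda_{n}\sin z$) by exploiting, in turn, the admissibility of $\mu_{n}$, the oddness of $f$, and the prescribed location $\Omega_{f}=\{\pi/2+k\pi\}$ of the critical points.

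First I would check, by induction on $n$, that every $\mu_{n}$ is admissible. Writing $\sigma_{1}(z)=-z$ and $\sigma_{2}(z)=z+\pi$, the hypotheses $f(-z)=-f(z)$ and $f(z+\pi)=-f(z)$ say $f\circ\sigma_{1}=\sigma_{1}\circ f$ and $f\circ\sigma_{2}=\sigma_{1}\circ f$. Since $\sigma_{1},\sigma_{2}$ are conformal with unimodular derivative, ``$\mu$ admissible'' is exactly ``$\sigma_{1}^{*}\mu=\sigma_{2}^{*}\mu=\mu$'', and functoriality of the pull back together with $f\circ\sigma_{j}=\sigma_{1}\circ f$ gives $\sigma_{j}^{*}(f^{*}\mu)=f^{*}(\sigma_{1}^{*}\mu)=f^{*}\mu$ whenever $\mu$ is admissible; hence each $\mu_{n}$ is admissible. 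Because $\phi_{n}$ solves the Beltrami equation for the $\sigma_{j}$-invariant coefficient $\mu_{n}$, the maps $\phi_{n}\circ\sigma_{1}\circ\phi_{n}^{-1}$ and $\phi_{n}\circ\sigma_{2}\circ\phi_{n}^{-1}$ are conformal automorphisms of ${\Bbb C}$, hence affine. The first is a nontrivial involution fixing $\phi_{n}(0)=0$, so it equals $\sigma_{1}$; the second is fixed-point free in ${\Bbb C}$, so it is a translation, which evaluating at $0$ (using $\phi_{n}(0)=0$ and $\phi_{n}(\pi)=\pi$) must be $\sigma_{2}$. Thus $\phi_{n}(-z)=-\phi_{n}(z)$ and $\phi_{n}(z+\pi)=\phi_{n}(z)+\pi$, and combining these relations at $z=\pi/2$ forces $\phi_{n}(\pi/2+k\pi)=\pi/2+k\pi$ for all $k$; the same holds for $\phi_{n+1}$.

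Next I would push these symmetries through the conjugacy $\phi_{n}\circ f=g_{n}\circ\phi_{n+1}$. Evaluating both sides at $\sigma_{1}(z)$ and using $f(-z)=-f(z)$ together with the equivariance of $\phi_{n}$ and $\phi_{n+1}$ yields $g_{n}(-w)=-g_{n}(w)$ for all $w\in{\Bbb C}$; feeding the normal form into the consequences $g_{n}(0)=0$ and $g_{n}''(0)=0$ of oddness, and using $a_{n}b_{n}\neq 0$, gives $d_{n}=0$ and $c_{n}\in\pi{\Bbb Z}$, so, absorbing a sign into $a_{n}$, we may take $g_{n}(z)=a_{n}\sin(b_{n}z)$. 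Finally, since $\phi_{n}$ and $\phi_{n+1}$ are homeomorphisms, composing with them on either side does not move the branch locus, so the set of critical points of $g_{n}\circ\phi_{n+1}$ equals that of $\phi_{n}\circ f$; that is, $\phi_{n+1}^{-1}\!\big(\mathrm{crit}\,g_{n}\big)=\Omega_{f}=\{\pi/2+k\pi\}$. Since $\phi_{n+1}$ fixes every point of $\{\pi/2+k\pi\}$, we get $\mathrm{crit}\,g_{n}=\{\pi/2+k\pi\}$; but $\mathrm{crit}\,g_{n}=\{(\pi/2+k\pi)/b_{n}\}$, and equality of these two sets forces $b_{n}$ to be real with $|b_{n}|=1$. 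One last sign change then gives $g_{n}(z)=\lambda_{n}\sin z$ with $\lambda_{n}=\pm a_{n}\neq 0$, which is the assertion of the lemma.

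I expect the main obstacle to be the careful derivation of the equivariance of $\phi_{n}$: one must correctly invoke uniqueness of the normalized quasiconformal solution of a Beltrami equation up to post-composition by a M\"{o}bius map, and then use the three normalization points $0,\pi,\infty$ to identify that M\"{o}bius map, while keeping track of the fact that $\sigma_{2}$ is parabolic whereas $\sigma_{1}$ is an involution fixing $0$. A secondary point requiring care is the claim that composition with homeomorphisms leaves the branch locus unchanged, which is what transports $\Omega_{f}$ over to $\mathrm{crit}\,g_{n}$. Once these are secured, everything else reduces to elementary identities for $\sin$.
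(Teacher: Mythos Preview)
Your proof is correct and follows essentially the same route as the paper's: establish admissibility of $\mu_{n}$, deduce the equivariance $\phi_{n}(-z)=-\phi_{n}(z)$ and $\phi_{n}(z+\pi)=\phi_{n}(z)+\pi$, infer that $g_{n}$ is odd to kill $c_{n}$ and $d_{n}$, and then pin down the period $b_{n}=\pm1$. Your two implementation choices differ slightly from the paper---you obtain the $\pi$-periodicity of $\phi_{n}$ by noting that the conjugate of the fixed-point-free map $\sigma_{2}$ is an affine map with no finite fixed point, hence a translation (the paper uses a modulus argument on annuli instead), and you determine $b_{n}=\pm1$ by matching the critical set of $g_{n}$ rather than its zero set---but both variants are valid and the overall architecture is the same.
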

\begin{proof}
We claim that $\mu_{n}$ is admissible for every $n \ge  1$, that is,
\begin{equation}\label{sym1} \mu_{n}(z) =
\mu_{n}(z + \pi) = \mu_{n}(-z).
\end{equation}

Let us prove this by induction. Note that it is true for $n = 0$.
Suppose it is true for $\mu_{n}$.  From (\ref{sym1}) and the
symmetry of $f$, it follows that it is also true for $\mu_{n+1}$.
This proves the claim.

Next let us prove that for every $n \ge 0$, $\phi_{n}(z) =
-\phi_{n}(z)$ and $\phi_{n}(z + \pi) = \phi(z) + \pi$. To see this,
by (\ref{sym1}) it follows that $\phi_{n}(-z) = a \phi_{n}(z) + b$
for some constants $a$ and $b$ where $a \ne 0$. Since $\phi_{n}(0) =
0$ it follows that $b = 0$.  It then follows that $a^{2} = 1$. It is
clear that $a = -1$ since otherwise $\phi_{n}$ is not a
homeomorphism.  The first assertion is proved. To prove the second
assertion, by (\ref{sym1}) again it follows that $\phi_{n}(z+\pi) =
a \phi_{n}(z) + b$ for some constant $a$ and $b$ where $a \ne 0$.
Note that there exist an annulus $A_{k}$ which separates $\{0,
\infty\}$ and  $\{k\pi, (k+1)\pi\}$ such that the ${\rm mod}(A_{k})
\to \infty$ as $k \to \infty$. Since $\phi_{n}$ is quasiconformal,
it follows that ${\rm mod}(\phi_{n}(A_{k})) \to \infty$ as $k \to
\infty$.  This implies that $a = 1$. This is because if $a \ne 1$,
then  as $k \to \infty$, $\phi_{n}(k\pi) \to \infty$, and the
modulus of any annulus which separates $\{0, \infty\}$ and
$\{\phi_{n}(k\pi), \phi_{n}((k+1)\pi)\}$ has a finite upper bound.
This is a contradiction. So we get $a = 1$.  By the normalization
condition $\phi_{n}(\pi) = \pi$, it follows that $b = \pi$. This
proves the second assertion.

By Lemma~\ref{ee},  it follows that for every $n\ge 0$, there exist
$\alpha_{n}, \beta_{n}, \gamma_{n}$ and $\lambda_{n}$ with
$\lambda_{n}, \alpha_{n} \ne 0$ such that $$g_{n}(z) =
\lambda_{n}\sin(\alpha_{n} z + \beta_{n}) + \gamma_{n}.$$

By the first assertion we proved above, it follows that $g_{n}(-z) =
-g_{n}(z)$. By a simple calculation, it follows that $\gamma_{n} =
0$ and $\beta_{n} = k\pi$ for some $k \in \Bbb Z$. By changing the
sign of $\lambda_{n}$ if necessary, we may assume that $\beta_{n} =
0$.

All the above arguments imply that $g_{n}(z) =
\lambda_{n}\sin(\alpha_{n}z)$. It suffices to prove that $\alpha_{n}
= 1$. Since $\phi_{n}(z + \pi) = \phi_{n}(z) + \pi$ holds for every
$n \ge 0$ and since the set of the zeros of $f$ is
$\{k\pi\:\big{|}\:k \in \Bbb Z\}$, it follows that the set of the
zeros of $g_{n}$ is also $\{k\pi\:\big{|}\:k\in{\Bbb Z}\}$. This
implies that $\alpha_{n} = 1$ or $-1$. By changing the sign of
$\lambda_{n}$ we may assume that $\alpha_{n} = 1$. The lemma has
been proved.
\end{proof}
Let $\phi_{n}: {\Bbb C} \to {\Bbb C}$ be the sequence of
quasiconformal homeomorphisms in the proof of the above theorem.
\begin{lemma}\label{pre-cri}
$\phi_{n+1}(\pi/2 + k\pi) = \pi/2 + k \pi$ for all $k \in \Bbb Z$
and $n \ge 0$.
\end{lemma}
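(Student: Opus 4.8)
The plan is to combine two ingredients: first, the conjugacy relation of Lemma~\ref{symm} forces $\phi_{n+1}$ to map the critical set $\Omega_{f}=\{\pi/2+k\pi\:\big{|}\:k\in\Bbb Z\}$ onto itself; second, the two functional equations for $\phi_{n+1}$ already extracted in the proof of Lemma~\ref{symm} determine exactly which self-map of this set it induces. Concretely, by Lemma~\ref{symm} we have $\phi_{n}\circ f = g_{n}\circ\phi_{n+1}$ with $g_{n}(z)=\lambda_{n}\sin(z)$, hence $g_{n}=\phi_{n}\circ f\circ\phi_{n+1}^{-1}$. Since $\phi_{n}$ and $\phi_{n+1}$ are homeomorphisms and local degree is multiplicative under composition and equal to $1$ for homeomorphisms, the local degree of $g_{n}$ at a point $w$ equals the local degree of $f$ at $\phi_{n+1}^{-1}(w)$; therefore $\Omega_{g_{n}}=\phi_{n+1}(\Omega_{f})$. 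Now $\Omega_{g_{n}}=\{\pi/2+k\pi\:\big{|}\:k\in\Bbb Z\}$ because $g_{n}=\lambda_{n}\sin$, while $\Omega_{f}=\{\pi/2+k\pi\:\big{|}\:k\in\Bbb Z\}$ by Definition~\ref{top}(2). So $\phi_{n+1}$ permutes $\{\pi/2+k\pi\:\big{|}\:k\in\Bbb Z\}$; in particular there is an integer $m$ (a priori depending on $n$) with $\phi_{n+1}(\pi/2)=\pi/2+m\pi$.

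Next I would invoke the two identities established for $\phi_{n+1}$ in the proof of Lemma~\ref{symm}, namely $\phi_{n+1}(-z)=-\phi_{n+1}(z)$ and $\phi_{n+1}(z+\pi)=\phi_{n+1}(z)+\pi$ for all $z$. Evaluating the first at $z=\pi/2$ gives $\phi_{n+1}(-\pi/2)=-\pi/2-m\pi$, while evaluating the second at $z=-\pi/2$ gives $\phi_{n+1}(-\pi/2)=\phi_{n+1}(\pi/2)-\pi=\pi/2+(m-1)\pi$. Comparing the two expressions yields $-\pi/2-m\pi=\pi/2+(m-1)\pi$, i.e. $m=0$, so $\phi_{n+1}(\pi/2)=\pi/2$. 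Iterating the translation identity $\phi_{n+1}(z+\pi)=\phi_{n+1}(z)+\pi$ then gives $\phi_{n+1}(\pi/2+k\pi)=\pi/2+k\pi$ for every $k\in\Bbb Z$, which is the assertion. Since $n\ge 0$ was arbitrary, the lemma follows.

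There is no genuinely hard step here. The only point that deserves a line of care is the identification $\Omega_{g_{n}}=\phi_{n+1}(\Omega_{f})$: one must note that pre- and post-composition by homeomorphisms does not alter the branch locus of a branched covering, which is exactly the multiplicativity of local degree together with the fact that homeomorphisms have local degree $1$ everywhere. Everything after that is a short bookkeeping argument with the symmetries of $\phi_{n+1}$.
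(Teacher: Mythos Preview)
Your proof is correct and follows essentially the same route as the paper: first observe that $\phi_{n+1}$ carries $\Omega_{f}$ onto $\Omega_{g_{n}}$ (the paper states this in one line, you spell out the local-degree reason), then use the oddness and $\pi$-periodicity of $\phi_{n+1}$ established in the proof of Lemma~\ref{symm} to pin down $\phi_{n+1}(\pi/2)=\pi/2$ by the same linear equation, and finally propagate via $\phi_{n+1}(z+\pi)=\phi_{n+1}(z)+\pi$. There is no substantive difference between the two arguments.
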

\begin{proof}
By the proof of Lemma~\ref{symm}, it follows that  $\phi_{n+1}(-z) =
-\phi_{n+1}(z)$ and  $\phi_{n+1}(z+\pi) = \phi_{n+1}(z)+\pi$. Thus
it suffices to show that $\phi_{n+1}(\pi/2) = \pi/2$.

Since $\phi_{n+1}$ maps  a critical point of $f$ to some critical
point of $g_{n}$,    it follows from Lemma~\ref{symm} that
$\phi_{n+1}(\pi/2) = \pi/2 + k_{0}\pi$ for some integer $k_{0}$.
Since $\phi_{n+1}(-z) = -\phi_{n+1}(z)$, it follows from that
$\phi_{n+1}(-\pi/2) = -\pi/2-k_{0}\pi$. Since $\phi_{n+1}(z+\pi) =
\phi_{n+1}(z)+\pi$, it follows that $\phi_{n+1}(\pi/2) =
\phi_{n+1}(-\pi/2)+ \pi$. Thus we have
$$
-\pi/2-k_{0}\pi + \pi  = \pi/2 + k_{0}\pi.
$$
This  implies that $k_{0}  = 0$. This completes the proof of
Lemma~\ref{pre-cri}.
\end{proof}


\section{The bounds of the sequence $\lambda_{n}$}

In $\S4$, let $\mu_{0}$ be the standard complex structure on the
complex plane ${\Bbb C}$. It is clear that $\mu_{0}$ is admissible.
Thus we get a sequence of complex structures $\mu_{n}$ by pulling
back $\mu_{0}$ through the iteration of $f$. Let $\phi_{n}$ be the
sequence of quasiconformal homeomorphisms and $\lambda_{n}$ be the
sequence of complex numbers obtained in $\S4$. The main purpose of
this section is to prove Lemma~\ref{key} which says that
$\lambda_{n}$ is bounded away from the origin and the infinity. This
is the key lemma of the whole paper. Before that, we need some
preliminary lemmas.

 For a Beltrami
coefficient $\mu$ defined on $\Bbb C$, let $\phi_{\mu}: {\Bbb C} \to
{\Bbb C}$ denote the quasiconformal homeomorphism of the plane which
fix $0$ and $\pi$. For a non-peripheral curve $\gamma \subset {\Bbb
C} \setminus X$, let  $ \|\gamma\|_{\mu, X}$ denote  the hyperbolic
length of the simple closed geodesic in ${\Bbb C}\setminus
\phi_{\mu}(X)$ which is homotopic to $\phi_{\mu}(\gamma)$. We say
$\gamma$ is a $(\mu, X)$-simple closed geodesic if
$\phi_{\mu}(\gamma)$ is a simple closed geodesic in ${\Bbb
C}\setminus \phi_{\mu}(X)$.

Recall that $Q_{f} = P_{f} \cup \{0, \pi\}$ and $W_{f} = P_{f} \cup
\{k\pi\:\big{|}\:k\in\Bbb Z\}$ (see Remark~\ref{vv}). By using the
same argument as in the proof of Proposition 7.2 of \cite{DH}, we
have
\begin{lemma}\label{ff}
Let $T_{f}$ denote the Teichm\"{u}ller space modeled on  $({\Bbb C},
X)$ with $X = P_{f}$, $Q_{f}$ or $W_{f}$. Let $\gamma \subset {\Bbb
C} \setminus X$ be a non-peripheral curve. Then the map
$\omega_{\gamma}: {T}_{f} \to {\Bbb R}$ given by
$$\omega_{\gamma}([\mu] ) =  \log \|\gamma\|_{u, X}$$
is a Lipschitz function with
Lipschitz constant $2$.
\end{lemma}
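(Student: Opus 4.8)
The plan is to establish the Lipschitz bound by comparing hyperbolic lengths of the geodesic representative of $\gamma$ under two different Beltrami coefficients, using the standard relation between quasiconformal dilatation and distortion of hyperbolic length, combined with the infinitesimal form of the Teichm\"uller metric from Lemma~\ref{infinitesimal-metric}. First I would recall that if $\psi:\mathbb{C}\setminus\phi_\mu(X)\to\mathbb{C}\setminus\phi_\nu(X)$ is a $K$-quasiconformal homeomorphism, then for any non-peripheral simple closed curve $\delta$ one has
$$
\frac{1}{K}\,\ell_{\mathrm{hyp}}(\delta)\le \ell_{\mathrm{hyp}}(\psi(\delta))\le K\,\ell_{\mathrm{hyp}}(\delta),
$$
where $\ell_{\mathrm{hyp}}$ denotes the length of the geodesic in the respective homotopy class on the hyperbolic surface. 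Applying this with $\psi=\phi_\nu\circ\phi_\mu^{-1}$, which can be chosen $K$-quasiconformal with $\tfrac12\log K$ arbitrarily close to $d_T([\mu],[\nu])$ by Definition~\ref{Tech-metric}, and with $\delta=\phi_\mu(\gamma)$, gives
$$
\bigl|\log\|\gamma\|_{\nu,X}-\log\|\gamma\|_{\mu,X}\bigr|\le \log K,
$$
hence $|\omega_\gamma([\mu])-\omega_\gamma([\nu])|\le 2\,d_T([\mu],[\nu])$, which is exactly the asserted Lipschitz bound with constant $2$.

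The one point that requires care — and which I expect to be the main obstacle — is that $\mathbb{C}\setminus\phi_\mu(X)$ need not carry a complete hyperbolic metric if $\#\,X\le 2$, so one must check that the surfaces in question really are hyperbolic, and that the quasiconformal distortion inequality for the geodesic length is being applied to the correct conformal completion (the once-punctured disk or the annulus/three-times-punctured sphere cases, etc.). In our setting $X$ is $P_f$, $Q_f$, or $W_f$; since $f\in\Sigma^c_{top}$ has at least one periodic critical point, $P_f$ is finite and contains at least the periodic critical orbit, and $Q_f\supset\{0,\pi\}$, $W_f$ is infinite. When we puncture $\mathbb{C}$ (equivalently $S^2$ at $\infty$ together with $X$), the number of punctures is at least $3$ in every case, so the complement is indeed hyperbolic and the length-distortion estimate applies verbatim. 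The infinite case $X=W_f$ is handled identically since $\mathbb{C}\setminus\phi_\mu(W_f)$ is still a hyperbolic Riemann surface.

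To organize the argument cleanly I would: (1) fix the non-peripheral curve $\gamma$ and note that for each $[\mu]$ the quantity $\|\gamma\|_{\mu,X}$ is well-defined and positive because the relevant surface is hyperbolic; (2) record the quasiconformal length-distortion inequality (this is exactly the ingredient used in the proof of Proposition~7.2 of \cite{DH}, so I would cite it rather than reprove it); (3) pick near-extremal representatives $\mu'$, $\nu'$ of the two classes so that $\tfrac12\log K(\phi_{\mu'}\circ\phi_{\nu'}^{-1})<d_T([\mu],[\nu])+\varepsilon$, observing that $\|\gamma\|_{\mu,X}$ depends only on the Teichm\"uller class, not on the chosen representative; (4) chain the inequalities to get $|\omega_\gamma([\mu])-\omega_\gamma([\nu])|\le 2d_T([\mu],[\nu])+2\varepsilon$ and let $\varepsilon\to0$. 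Equivalently, via Lemma~\ref{infinitesimal-metric}, one can phrase this infinitesimally: along any smooth path $\tau(t)$ the derivative of $\log\|\gamma\|_{\tau(t),X}$ is bounded in absolute value by $2\|\tau'(t)\|$, and integration gives the Lipschitz constant $2$; this is the form stated in \cite{DH} and is the cleaner route if one wants to avoid discussing near-extremal maps.
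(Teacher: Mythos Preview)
Your proposal is correct and follows exactly the approach the paper adopts: the paper does not give an independent proof but simply states that the lemma follows ``by using the same argument as in the proof of Proposition~7.2 of \cite{DH}'', and your write-up is precisely a clean reconstruction of that argument via the Wolpert-type quasiconformal length-distortion inequality combined with the definition of the Teichm\"uller distance. Your additional check that $\mathbb{C}\setminus\phi_\mu(X)$ is genuinely hyperbolic in each of the three cases $X=P_f,\,Q_f,\,W_f$ is a welcome point of rigor that the paper leaves implicit.
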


\begin{lemma}\label{Lip}
Let $X = P_{f}, \:Q_{f}$ or $W_{f}$. Then there is a $1 < C_{1} <
\infty$ such that for every $n \ge 0$ and any non-peripheral curve
$\gamma \subset {\Bbb C} \setminus X$, one has
$$
\|\gamma\|_{X,\:\mu_{n+1}}/C_{1} \le \|\gamma\|_{X,\:\mu_{n}} \le
C_{1} \|\gamma\|_{X,\:\mu_{n+1}}.
$$
\end{lemma}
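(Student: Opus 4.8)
The plan is to compare $\|\gamma\|_{X,\mu_n}$ and $\|\gamma\|_{X,\mu_{n+1}}$ through the dynamics $\phi_n\circ f = g_n\circ\phi_{n+1}$ with $g_n(z)=\lambda_n\sin z$, and to control the distortion uniformly using the uniform bound on the dilatations $\|\mu_n\|_\infty\le\kappa$. First I would observe that $\phi_{n+1}$ conjugates $f$ to $g_n$ as a map of marked planes $({\Bbb C},X)\to({\Bbb C},\phi_{n+1}(X))$, and the relevant quasiconformal homeomorphism here is the $K$-quasiconformal map $h_n=\phi_n\circ\phi_{n+1}^{-1}$ (or its inverse), whose Beltrami coefficient is bounded in norm by a constant $K=K(\kappa)=(1+\kappa)/(1-\kappa)$ independent of $n$. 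Because $K$-quasiconformal maps distort the hyperbolic metric of a hyperbolic plane domain by a factor controlled by $K$ (a Wolpert-type or Teichm\"uller-distance estimate, as used in Lemma~\ref{ff} and Proposition~7.2 of \cite{DH}), the hyperbolic length of the geodesic homotopic to a fixed curve changes by at most a bounded multiplicative factor under such a map. Concretely, $d_T([\mu_n],[\mu_{n+1}])\le\frac12\log K$ in the Teichm\"uller space modeled on $({\Bbb C},X)$, and then Lemma~\ref{ff} gives $|\log\|\gamma\|_{X,\mu_{n+1}}-\log\|\gamma\|_{X,\mu_n}|\le 2\,d_T([\mu_n],[\mu_{n+1}])\le\log K$.

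More carefully, I must be sure the two Beltrami coefficients $\mu_n$ and $\mu_{n+1}$ represent points in the \emph{same} Teichm\"uller space $T_f$ modeled on $({\Bbb C},X)$ and that $\mu_{n+1}=f^*\mu_n$, so that $h_n=\phi_n\circ f\circ\phi_{n+1}^{-1}=g_n$ is \emph{holomorphic} — hence the comparison cannot go through $g_n$ itself but must go through the qc map relating $\mu_n$ and $\mu_{n+1}$ as points of $B({\Bbb C})$. Here is the key point: $\mu_n$ and $\mu_{n+1}=f^*\mu_n$ are two different Beltrami coefficients on ${\Bbb C}$; the qc map $\phi_{\mu_n}\circ\phi_{\mu_{n+1}}^{-1}$ realizing the Teichm\"uller-distance comparison has Beltrami coefficient of norm bounded independently of $n$ because both $\|\mu_n\|_\infty\le\kappa$ and $\|\mu_{n+1}\|_\infty\le\kappa$ (pull-back does not increase the dilatation, by the formula (\ref{pull-back-coefficients}) and $\|\mu_f\|_\infty\le\kappa_0$ for the fixed quasiregular $f$ — in fact here one should use that $f^*$ maps $B({\Bbb C})$ into a fixed ball $\{\|\cdot\|_\infty\le\kappa'\}$). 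Thus $d_T([\mu_n],[\mu_{n+1}])$ is bounded by a constant $D=D(\kappa)$ depending only on $\kappa$, uniformly in $n$. Applying Lemma~\ref{ff} (Lipschitz constant $2$) then yields
$$
|\log\|\gamma\|_{X,\mu_n}-\log\|\gamma\|_{X,\mu_{n+1}}|\le 2D,
$$
and setting $C_1=\max(2,e^{2D})$ gives the claim.

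The main obstacle, and the step deserving the most care, is the uniform bound on $d_T([\mu_n],[\mu_{n+1}])$ — equivalently, producing a single constant $\kappa'<1$ with $\|\mu_n\|_\infty\le\kappa'$ for all $n$, and then bounding the Teichm\"uller distance between two such nearby classes. The bound $\|\mu_n\|_\infty\le\kappa'$ for all $n$ is not immediate from $\|\mu_0\|_\infty=\kappa<1$ alone, since each pull-back by the quasiregular $f$ a priori involves $\mu_f$; one needs that $f$ is quasiregular with a fixed bound $\|\mu_f\|_\infty=k_0<1$, that $f^{\,n}$ need not have bounded dilatation but the \emph{pulled-back} coefficient $\mu_n=(f^n)^*\mu_0$ does, because away from the (finitely many, and in $X$) critical values the relevant composition is with a holomorphic branch — more precisely $\mu_n=f^*\mu_{n-1}$ and $f^*$ maps the ball $\{\|\cdot\|_\infty\le r\}$ into $\{\|\cdot\|_\infty\le (r+k_0)/(1+rk_0)\}$, whose iterates converge to $1$; so one must instead argue as in \cite{DH} that $\mu_n$ stays in a fixed ball by exploiting that $f$ restricted to the complement of a neighborhood of $P_f$ can be taken holomorphic, i.e.\ choosing the quasiregular representative of $f$ supported near $P_f$. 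Once $\|\mu_n\|_\infty\le\kappa'$ is secured, the distance bound is standard: $d_T([\mu_n],[\mu_{n+1}])\le d_{B({\Bbb C})}(\mu_n,\mu_{n+1})\le\frac12\log\frac{1+k_0}{1-k_0}$ since $\mu_{n+1}=f^*\mu_n$ differs from $\mu_n$ by a controlled amount, and the rest follows from Lemma~\ref{ff}.
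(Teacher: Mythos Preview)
Your overall structure is right: bound $d_{T_f}([\mu_n],[\mu_{n+1}])$ uniformly in $n$ and then invoke Lemma~\ref{ff}. The gap is in how you obtain that uniform bound. You try to control it through $L^\infty$ estimates on the $\mu_n$ themselves, and you correctly notice the obstacle: since $f$ is only quasiregular, the iterated pull-backs $\mu_n=(f^n)^*\mu_0$ can have $\|\mu_n\|_\infty\to 1$, so the naive bound $d_T\le\tfrac12\log K(\kappa)$ does not apply. Your proposed fixes (choosing a representative of $f$ holomorphic off a neighborhood of $P_f$, or the inequality $d_{B(\Bbb C)}(\mu_n,\mu_{n+1})\le\tfrac12\log\frac{1+k_0}{1-k_0}$) are not substantiated; in particular the last displayed inequality has no justification---$\mu_{n+1}=f^*\mu_n$ can differ from $\mu_n$ by an amount that is \emph{not} controlled by $k_0$ alone.

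The missing idea is the one the paper uses, and it sidesteps all of this: by Corollary~\ref{key-6} the pull-back operator $\sigma_f$ is weakly contracting for the Teichm\"uller metric, i.e.\ $\|d\sigma_f|_\tau\|\le 1$ for every $\tau$. Since $[\mu_{n}]=\sigma_f^{\,n}[\mu_0]$ and $[\mu_{n+1}]=\sigma_f^{\,n}[\mu_1]$, integrating along a path and iterating gives
\[
d_{T_f}([\mu_n],[\mu_{n+1}])\;\le\; d_{T_f}([\mu_0],[\mu_1])
\]
for every $n$. Now Lemma~\ref{ff} yields $\bigl|\log\|\gamma\|_{X,\mu_n}-\log\|\gamma\|_{X,\mu_{n+1}}\bigr|\le 2\,d_{T_f}([\mu_0],[\mu_1])$, and $C_1=e^{2d_{T_f}([\mu_0],[\mu_1])}$ works. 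No control of $\|\mu_n\|_\infty$ is needed at all.
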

\begin{proof}
By Corollary~\ref{key-6}, it follows that
$$
d_{{T}_{f}}([\mu_{n}], [\mu_{n+1}]) \le d_{{T}_{f}}([\mu_{0}],
[\mu_{1}]).
$$
Now lemma~\ref{Lip} follows from Lemma~\ref{ff}.
\end{proof}

Note that $f$ has either one or two periodic cycles containing
critical points. In the later case, the two cycles are symmetric
about the origin and thus has the same length.   We call such cycle
a critical periodic cycle. Let $m\ge 1$ denote the length of the
critical periodic cycle(s). Let us label the points in the critical
periodic cycle by $$x_{0}, \cdots, x_{m-1}$$ such that $f(x_{i+1}) =
x_{i}$ for $0 \le i \le m-2$ and $f(x_{0}) = x_{m-1}$ where $x_{0} =
k_{0} \pi/2$ for some  integer $k_{0} \in \Bbb Z$. Since
 $x_{0}$ and $-x_{0}$ are both periodic, we can always assume that $k_{0}$ is an even integer.
 It follows that
 $$
 \lambda_{n} = \phi_{n}(x_{m-1}).
 $$
 For the convenience of our later
discussion, let us define the constant
$$
\kappa_{0} = \frac{1}{10^{4}m}.
$$

Let $d(\cdot, \cdot)$ denote the distance with respect to the
Euclidean metric in the plane. Note that from the proof of
Lemma~\ref{symm}, we have $\phi_{n}(k\pi) = k\pi$ for all $n \ge 0$
and $k \in \Bbb Z$.
\begin{lemma}\label{gap-t}
There exists an $0< \epsilon_{0} < 1$ such that for any integers $k
\in \Bbb Z$ and  $0\le i \le m-1$, if $d(k\pi, \phi_{n}(x_{i})) <
\epsilon_{0} \kappa_{0}$ for some  $n \ge 0$, then either
$$d(k\pi, \phi_{l}(x_{i})) < \epsilon_{0} \kappa_{0}$$ for all $l
\ge n$ or
$$\epsilon_{0} \kappa_{0} \le  d(k\pi, \phi_{l}(x_{i})) <
\kappa_{0}$$
for some $l \ge n$.
\end{lemma}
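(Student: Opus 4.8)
The plan is to reduce Lemma~\ref{gap-t} to a single one-step ``no-jump'' estimate and then to prove that estimate by a compactness argument that uses only the fact that consecutive iterates $[\mu_{l-1}]$, $[\mu_l]$ stay a bounded Teichm\"uller distance apart; in particular the bounds on $\lambda_n$ are not used, so there is no circularity. Throughout I would work in the Teichm\"uller space modeled on $W_f = P_f \cup \{k\pi\:|\:k\in\Bbb Z\}$ (Remark~\ref{vv}), so that every lattice point $k\pi$ is a marked point. It suffices to produce an $\epsilon_0 \in (0,1)$ such that the implication
$$
d(k\pi, \phi_{l-1}(x_i)) < \epsilon_0\kappa_0 \quad\Longrightarrow\quad d(k\pi, \phi_l(x_i)) < \kappa_0 \qquad (\star)
$$
holds for all $l \ge 1$, all $0 \le i \le m-1$ and all $k \in \Bbb Z$. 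Indeed, granting $(\star)$: if $d(k\pi, \phi_n(x_i)) < \epsilon_0\kappa_0$ and $d(k\pi,\phi_l(x_i)) < \epsilon_0\kappa_0$ for all $l \ge n$, then the first alternative of the lemma holds; otherwise, letting $L$ be the least $l \ge n$ with $d(k\pi,\phi_l(x_i)) \ge \epsilon_0\kappa_0$, one has $L \ge n+1$ and $d(k\pi,\phi_{L-1}(x_i)) < \epsilon_0\kappa_0$, so $(\star)$ forces $\epsilon_0\kappa_0 \le d(k\pi,\phi_L(x_i)) < \kappa_0$, the second alternative.

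To prove $(\star)$ I would compare the normalizations at levels $l-1$ and $l$ by a uniformly quasiconformal map. By Corollary~\ref{key-6} the operator $\sigma_f$ is $1$-Lipschitz for $d_{T_f}$, whence $d_{T_f}([\mu_l],[\mu_{l-1}]) \le d_{T_f}([\mu_0],[\mu_1]) =: D_0$ for every $l$, a constant depending only on $f$. By Definition~\ref{Tech-metric} there are Beltrami coefficients $\nu_l \in [\mu_l]$ and $\nu_{l-1}\in[\mu_{l-1}]$ whose solving maps, normalized to fix $0,\pi,\infty$, satisfy $K\big(\phi_{\nu_l}\circ\phi_{\nu_{l-1}}^{-1}\big) \le e^{2D_0+1} =: K_0$ (normalizing does not affect $K$, being pre- and post-composition with conformal maps). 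Since an affine self-map of $\Bbb C$ fixing the marked points $0$ and $\pi$ is the identity, the equality $[\nu_j] = [\mu_j]$ forces $\phi_{\nu_j}$ and $\phi_j$ to be isotopic rel $W_f$, hence to coincide on $W_f$, for $j = l-1, l$. Therefore $\Psi_l := \phi_{\nu_l}\circ\phi_{\nu_{l-1}}^{-1}$ is a $K_0$-quasiconformal self-map of $\Bbb C$ fixing $0,\pi,\infty$ with $\Psi_l\big(\phi_{l-1}(w)\big) = \phi_l(w)$ for every $w \in W_f$; in particular $\Psi_l(k\pi) = k\pi$ for all $k$ (recall $\phi_j(k\pi) = k\pi$) and $\Psi_l\big(\phi_{l-1}(x_i)\big) = \phi_l(x_i)$ for every $i$.

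Finally I would feed $\Psi_l$ into a normal-family estimate made uniform in $k$ by an affine rescaling. For $k \in \Bbb Z$ let $A_k(z) = z/\pi - k$, the affine map sending $k\pi$, $(k+1)\pi$, $\infty$ to $0$, $1$, $\infty$; since $\Psi_l$ fixes $0$, $k\pi$, $(k+1)\pi$ and $\infty$, the conjugate $A_k \circ \Psi_l \circ A_k^{-1}$ belongs to the family $\mathcal F$ of $K_0$-quasiconformal self-maps of $\Bbb C$ fixing $0$, $1$, $\infty$, which is compact and hence equicontinuous at $0$. So there is $\delta' > 0$, depending only on $K_0$ and $\kappa_0$, with $|F(w)| < \kappa_0/\pi$ whenever $F \in \mathcal F$ and $|w| < \delta'$; unravelling the conjugation this says that $|z - k\pi| < \pi\delta'$ implies $|\Psi_l(z) - k\pi| < \kappa_0$, uniformly in $k$ and $l$. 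Now set $\epsilon_0 := \min\{1/2,\ \pi\delta'/\kappa_0\} \in (0,1)$. If $d(k\pi, \phi_{l-1}(x_i)) < \epsilon_0\kappa_0 \le \pi\delta'$, then $d(k\pi, \phi_l(x_i)) = \big|\Psi_l(\phi_{l-1}(x_i)) - k\pi\big| < \kappa_0$, which is exactly $(\star)$, and the lemma follows.

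I expect the crux to be the construction of $\Psi_l$. The $1$-Lipschitz property of $\sigma_f$ bounds only the Teichm\"uller distance, an infimum over representatives, and in general says nothing about the dilatation of $\phi_l\circ\phi_{l-1}^{-1}$ itself; the point is that one may pass to near-extremal representatives $\nu_{l-1},\nu_l$ while keeping the solving maps equal to $\phi_{l-1},\phi_l$ on the marked set $W_f$ — this is where the marking is used, to pin down the affine ambiguity at $0,\pi$ and to pin down the entire lattice $\{k\pi\}$ — so that the uniformly quasiconformal map $\Psi_l$ still carries $\phi_{l-1}(W_f)$ onto $\phi_l(W_f)$ point by point. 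Everything after that is the soft compactness estimate above, made uniform in $k$ by the rescalings $A_k$ and uniform in $l$ by the single bound $D_0$.
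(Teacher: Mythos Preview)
Your argument is correct, and the overall architecture---reduce to a one-step ``no-jump'' estimate $(\star)$ and derive that from the uniform bound $d_{T_f}([\mu_{l-1}],[\mu_l])\le d_{T_f}([\mu_0],[\mu_1])$ in the $W_f$-model---is exactly the paper's. The difference is in how the Teichm\"uller bound is converted into $(\star)$. The paper passes through hyperbolic geodesic lengths: by Lemma~\ref{ff} the functional $[\mu]\mapsto\log\|\gamma\|_{\mu,W_f}$ is $2$-Lipschitz, so (Lemma~\ref{Lip}) lengths change by at most a fixed factor $C_1$ from step $l-1$ to step $l$; one then notes that a curve separating $\{k\pi,x_i\}$ from $\{(k+1)\pi,\infty\}$ has length bounded below by some $\delta_0(\kappa_0)$ whenever $d(k\pi,\phi_l(x_i))\ge\kappa_0$, while such a curve can be made shorter than $\delta_0/C_1$ once $d(k\pi,\phi_{l-1}(x_i))$ is small enough. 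You instead realise the Teichm\"uller bound by a near-extremal $K_0$-quasiconformal comparison map $\Psi_l$ that fixes every $k\pi$ and carries $\phi_{l-1}(x_i)$ to $\phi_l(x_i)$, and then invoke equicontinuity of the normal family of $K_0$-qc maps fixing $0,1,\infty$, made uniform in $k$ by the affine rescaling $A_k$.

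Both routes rest on the same input (Corollary~\ref{key-6}) and neither uses the $\lambda_n$-bounds, so there is no circularity. Your version has the virtue of being insensitive to where the other marked points $\phi_{l-1}(x_j)$ sit: the paper's ``short geodesic exists when $d$ is small'' step tacitly needs a pigeonhole over dyadic annuli to handle possible clustering of several $\phi_{l-1}(x_j)$ near $k\pi$, whereas your equicontinuity estimate sees only the single point $\phi_{l-1}(x_i)$. Conversely, the paper's route reuses machinery (Lemmas~\ref{ff} and~\ref{Lip}) already in place for later arguments, so it is economical within the paper's logical flow.
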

\begin{proof}
Let $X = W_{f}$. For $\kappa_{0}$ defined above, it is easy to see
that there exists a $\delta_{0}> 0$ depending only on $\kappa_{0}$
such that for any $n \ge 0$ and any non-peripheral  curve
$\gamma\subset {\Bbb C}\setminus X$ which separates $\{k\pi,
x_{i}\}$ and $\{(k+1)\pi, \infty\}$ for some $0 \le i \le m-1$,  one
has $\|\gamma\|_{\mu_{n}, X}
> \delta_{0}$ provided that
$d(k\pi, \phi_{n}(x_{i})) \ge  \kappa_{0}$.

Note that there exists a  short  simple closed $(\mu_{n},
X)$-geodesic which separates $\{k\pi, x_{i}\}$ and $\{(k+1)\pi,
\infty\}$ such that $\|\gamma\|_{\mu_{n}, X}$ can be as small as
wanted provided that $d(k\pi, \phi_{n}(x_{i}))$ is small.  This
implies that there exists an $\epsilon_{0} > 0$ small such that when
$d(k\pi, \phi_{n}(x_{i})) < \epsilon_{0} \kappa_{0}$, then one can
find a short  simple closed $(\mu_{n}, X)$-geodesic which  separates
$\{k\pi, x_{i}\}$ and $\{(k+1)\pi, \infty\}$ such that
$\|\gamma\|_{\mu_{n}, X} < \delta_{0}/C_{1}$ where $C_{1} > 1$ is
the constant in Lemma~\ref{Lip}.  It follows from Lemma~\ref{Lip}
that such $\epsilon_{0}$ is the desired number. The proof of
Lemma~\ref{gap-t} is completed.
\end{proof}

Note that $\lambda_{n} = \phi_{n}(x_{m-1})$.
\begin{lemma}\label{Lip'}
There theist   $C_{2} > 1$ and $A
> 1$ independent of $n$ such that for every $n \ge 0$, if
$\kappa_{0}\le  |\lambda_{n}| \le 1/\kappa_{0}$, then
$$1/A <  |\lambda_{n+1}| <  A,$$ and if $|\lambda_{n}| < \kappa_{0}$,
then $$|\lambda_{n}|^{C_{2}} <  |\lambda_{n+1}| <
|\lambda_{n}|^{1/C_{2}},$$ and if  $|\lambda_{n}| > 1/\kappa_{0}$,
then
$$|\lambda_{n}|^{1/C_{2}} < |\lambda_{n+1}| <
|\lambda_{n}|^{C_{2}}.$$
\end{lemma}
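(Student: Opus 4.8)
The plan is to control $|\lambda_{n+1}|=|\phi_{n+1}(x_{m-1})|$ in terms of $|\lambda_n|=|\phi_n(x_{m-1})|$ by tracking the whole critical periodic cycle under the homeomorphisms $\phi_n$, using the relation $\phi_n\circ f = g_n\circ\phi_{n+1}$ with $g_n(z)=\lambda_n\sin z$, together with the quasiconformal bounds already available from Corollary~\ref{key-6} and Lemma~\ref{Lip}. The starting point is that $\phi_n$ and $\phi_{n+1}$ both fix $0$, $\pi$ (hence all $k\pi$) and $\infty$, commute with $z\mapsto z+\pi$ and $z\mapsto -z$, and have dilatation bounded independently of $n$ (since $\|\mu_n\|_\infty\le\kappa$). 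Writing $y_i^{(n)}=\phi_n(x_i)$, the commuting square gives $g_n(y_{i+1}^{(n+1)})=y_i^{(n)}$, i.e.\ $\lambda_n\sin\bigl(y_{i+1}^{(n+1)}\bigr)=y_i^{(n)}$ for $0\le i\le m-2$ and $\lambda_n\sin\bigl(y_0^{(n+1)}\bigr)=y_{m-1}^{(n)}=\lambda_n$; the last equation says $\sin\bigl(y_0^{(n+1)}\bigr)=1$, consistent with $y_0^{(n)}=k_0\pi/2$ being fixed (Lemma~\ref{pre-cri}). So in fact the cycle of $g_n$ is $y_0^{(n+1)},\dots,y_{m-1}^{(n+1)}$ and $\lambda_{n+1}=y_{m-1}^{(n+1)}$ is the last point of the cycle for the next map.

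The key geometric input is the uniform quasiconformal distortion of $\psi_n:=\phi_n\circ\phi_{n+1}^{-1}$, which has dilatation bounded by a constant $K_0=K_0(\kappa)$ and fixes $0,\pi,\infty$ and conjugates $g_n$ to itself: $\psi_n\circ g_n = g_n\circ(\text{something})$ — more precisely from $\phi_n f=g_n\phi_{n+1}$ and $\phi_{n+1}f=g_{n+1}\phi_{n+2}$ one gets the comparison of consecutive $\lambda$'s. The cleanest route: the point $\lambda_n=\phi_n(x_{m-1})$ and the point $\lambda_{n+1}=\phi_{n+1}(x_{m-1})$ are images of the \emph{same} $x_{m-1}$ under two $K_0$-quasiconformal maps fixing $0,\pi,\infty$. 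A $K_0$-qc map of $\hat{\mathbb C}$ fixing three points moves any point by a controlled amount \emph{in the appropriate scale}: by the standard Teichm\"uller/Gr\"otzsch distortion estimates, if we measure position in terms of the cross-ratio-type quantity or, equivalently, via the modulus of the annulus separating $\{0,x_{m-1}\}$ from $\{\pi,\infty\}$ (or $\{k\pi,(k+1)\pi\}$ for the relevant $k$), a $K_0$-qc map changes $\log(\text{mod})$ by at most a factor $K_0$ and an additive constant. Translating moduli back to Euclidean distances: when $|\lambda_n|$ is bounded between $\kappa_0$ and $1/\kappa_0$ the relevant annuli have moduli bounded above and below, so $|\lambda_{n+1}|\in(1/A,A)$; when $|\lambda_n|<\kappa_0$ the separating annulus between $\lambda_n$ and $0$ has large modulus $\asymp\log(1/|\lambda_n|)$, and $K_0$-quasiconformality forces the corresponding modulus for $\lambda_{n+1}$ to lie between $\log(1/|\lambda_n|)/C_2$ and $C_2\log(1/|\lambda_n|)$ up to additive constants, which exponentiates to $|\lambda_n|^{C_2}<|\lambda_{n+1}|<|\lambda_n|^{1/C_2}$; the case $|\lambda_n|>1/\kappa_0$ is the same argument near $\infty$ (or, using $z\mapsto 1/z$ symmetry of $\sin$'s behaviour, reduces to the previous one). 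The constant $C_2$ depends only on $K_0$, hence only on $\kappa$ and on $m$ through $\kappa_0$.

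The main obstacle is making precise the passage between ``$K_0$-qc distortion'' and the stated multiplicative/power-law bounds on $|\lambda_{n+1}|$ versus $|\lambda_n|$ — i.e.\ choosing the right conformal modulus (an annulus separating $\lambda_n$ and $0$ from, say, the nearest two integer multiples $k\pi,(k+1)\pi$, or from $\pi$ and $\infty$) whose modulus is comparable to $\log(1/|\lambda_n|)$ as $\lambda_n\to 0$, and verifying that $\phi_n,\phi_{n+1}$ send this configuration to a comparable one. Here one must be careful that $\lambda_n=\phi_n(x_{m-1})$ could in principle be close to some $k\pi$ with $k\ne 0$; but $x_{m-1}$ is a fixed point of the configuration while $\phi_n(k\pi)=k\pi$, so the separating annulus is taken between $\{0,\lambda_n\}$ and the pair of consecutive integer multiples bracketing $x_{m-1}$, and Lemma~\ref{gap-t} together with Lemma~\ref{Lip} guarantees these separating curves have hyperbolic length controlled uniformly in $n$ — which is exactly what converts ``modulus of $\phi_n(\gamma)$'' bounds into ``modulus of $\phi_{n+1}(\gamma)$'' bounds with a constant independent of $n$. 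Once that dictionary is set up, the three inequalities follow by direct estimation of moduli of Gr\"otzsch-type annuli; I do not anticipate difficulty beyond this bookkeeping.
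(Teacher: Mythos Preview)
Your overall geometric picture---translating control of $|\lambda_n|$ into control of the modulus of an annulus (equivalently, the hyperbolic length of a geodesic) separating $\{0,x_{m-1}\}$ from $\{\pi,\infty\}$, and then passing from step $n$ to step $n+1$---is exactly the paper's approach. The paper works with $X=Q_f=P_f\cup\{0,\pi\}$, writes $\|\eta\|_{\mu_n,X}^{-1}=-\log|\lambda_n|+O(1)$ for that separating geodesic when $|\lambda_n|$ is small, and then invokes Lemma~\ref{Lip} to compare $\|\eta\|_{\mu_n,X}$ with $\|\eta\|_{\mu_{n+1},X}$ up to a multiplicative constant; exponentiating gives the power-law bounds.

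There is, however, a genuine gap in your justification of the comparison step. You assert that $\|\mu_n\|_\infty\le\kappa$ and deduce from this that $\psi_n=\phi_n\circ\phi_{n+1}^{-1}$ has dilatation bounded by some $K_0(\kappa)$ independently of $n$. That is false: since $f$ is merely quasi-regular, $\|\mu_n\|_\infty=\|(f^n)^*\mu_0\|_\infty$ in general grows with $n$ (the maximal dilatation of $f^n$ may be as large as $K(f)^n$), so neither $\phi_n$ nor $\psi_n$ has a priori bounded dilatation. What \emph{is} uniformly bounded is the Teichm\"uller distance $d_{T_f}([\mu_n],[\mu_{n+1}])\le d_{T_f}([\mu_0],[\mu_1])$ from Corollary~\ref{key-6}, and this is precisely what Lemma~\ref{Lip} packages via Lemma~\ref{ff}. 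You do mention Lemma~\ref{Lip} at the very end, but only to handle the auxiliary worry about proximity to $k\pi$; it is in fact the tool that should replace your ``$K_0$-qc distortion of $\psi_n$'' throughout. Once you make that substitution your argument coincides with the paper's. (A minor slip: a $K$-quasiconformal map distorts $\mathrm{mod}$ by at most a factor $K$, not $\log\mathrm{mod}$.) The excursion through the full critical-cycle relations and Lemma~\ref{gap-t} is unnecessary for this lemma; only the single point $x_{m-1}$ and the separating curve in $Q_f$ are needed.
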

\begin{proof}
Let $X = Q_{f}$.  Then there is a constant $L
> 1$ independent of $n$ such that if $\kappa_{0}\le |\lambda_{n}|
\le 1/\kappa_{0}$, one has
$$
  \|\xi_{1}\|_{\mu_{n}, X} > L^{-1} \hbox{\:  and\:  }
\|\xi_{2}\|_{\mu_{n}, X} < L,$$ where $\xi_{1}$ and $\xi_{2}$ are
respectively the shortest $(\mu_{n}, X)$-simple closed geodesics
which separate $\{0, x_{m-1}\}$ and $\{\pi, \infty\}$, and $\{0,
\pi\}$ and $\{x_{m-1}, \infty\}$.

By Lemma~\ref{Lip}, it follows that
$$
 \|{\xi}_{1}\|_{\mu_{n+1}, X} > (C_{1}L)^{-1}\hbox{ \: and \: }
\|{\xi}_{2}\|_{\mu_{n+1}, X} < C_{1}L.$$ This implies the existence
of $A$ such that the first inequality holds.

Now suppose that $|\lambda_{n}| < \kappa_{0}$. Let $\eta_{1}$ and
$\eta_{2}$  be respectively the shortest $(\mu_{n}, X)$ and
$(\mu_{n+1}, X)$-simple closed geodesics which separate $\{0,
x_{m-1}\}$ and $\{\pi, \infty\}$. Then one has
$$
\|\eta_{1}\|_{\mu_{n}, X}^{-1} = -\log|\lambda_{n}| + O(1)
$$
and
$$
\|\eta_{2}\|_{\mu_{n+1}, X}^{-1} = -\log|\lambda_{n+1}| + O(1)
$$
where $O(1)$ is used to denote some constant with upper and lower
bounds independent of $n$. By Lemma~\ref{Lip}, there is a constant
$1< C < \infty$ independent of $n$ such that
$$
C^{-1}\|\eta_{1}\|_{\mu_{n}, X} \le \|\eta_{2}\|_{\mu_{n+1}, X} \le
C \|\eta_{1}\|_{\mu_{n}, X}.
$$
 This implies the existence of $C_{2}$ so that the
 second inequality holds. The same argument can be used to prove
the third inequality.  The proof of Lemma~\ref{Lip'} is completed.

\end{proof}
\begin{lemma}\label{lam-bd}
There exists a monotone increasing function $\beta:(0, +\infty) \to
(0, +\infty)$ such that for all $n \ge 0$, if $|\lambda_{n}| < M$
then $|\phi_{n}(x_{k})| < \beta(M)$ for all $0\le k \le m-1$.
\end{lemma}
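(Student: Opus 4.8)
The plan is to rewrite $\phi_n(x_k)$ as an explicit finite composition of the maps $g_j(z)=\lambda_j\sin z$ applied to the periodic point $x_0$, and then to bound the finitely many parameters $\lambda_j$ that appear by iterating Lemma~\ref{Lip'}.

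First I would note that $\phi_n(x_0)=x_0$ for every $n\ge 0$: indeed $\phi_0$ is the identity since $\mu_0$ is the standard structure, while for $n\ge 1$ the homeomorphism $\phi_n$ fixes every point of the form $k\pi$ (by the proof of Lemma~\ref{symm}) and every point of the form $\pi/2+k\pi$ (by Lemma~\ref{pre-cri}); as $x_0=k_0\pi/2$ is of one of these two forms, $\phi_n(x_0)=x_0$ in all cases. Iterating the conjugacy $\phi_n\circ f=g_n\circ\phi_{n+1}$ of Lemma~\ref{symm} gives
$$
\phi_n\circ f^{\,j}=g_n\circ g_{n+1}\circ\cdots\circ g_{n+j-1}\circ\phi_{n+j}\qquad(j\ge 1).
$$
Since the forward orbit of $x_0$ runs $x_0\to x_{m-1}\to x_{m-2}\to\cdots\to x_1\to x_0$, we have $x_k=f^{\,m-k}(x_0)$ for $0\le k\le m-1$ (with $f^{\,m}(x_0)=x_0$). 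Taking $j=m-k$ in the displayed identity and using $\phi_{n+m-k}(x_0)=x_0$ yields
$$
\phi_n(x_k)=\bigl(g_n\circ g_{n+1}\circ\cdots\circ g_{n+m-k-1}\bigr)(x_0)\qquad(0\le k\le m-1),
$$
so that $\phi_n(x_k)$ is determined by $x_0$ and by $\lambda_n,\lambda_{n+1},\dots,\lambda_{n+m-1}$ alone.

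Next I would propagate the hypothesis $|\lambda_n|<M$ along a bounded number of iterates. Put $h(t)=\max\{A,\,t^{\,C_2}\}$, with $A>1$ and $C_2>1$ the constants of Lemma~\ref{Lip'}; this is a monotone increasing self-map of $(0,+\infty)$ satisfying $h(t)\ge t$. Checking the three cases of Lemma~\ref{Lip'} according to whether $|\lambda_j|<\kappa_0$, $\kappa_0\le|\lambda_j|\le 1/\kappa_0$, or $|\lambda_j|>1/\kappa_0$ (and using $\kappa_0<1<A$), one sees that $|\lambda_j|<t$ implies $|\lambda_{j+1}|<h(t)$. Iterating $m-1$ times, $|\lambda_n|<M$ forces $|\lambda_{n+j}|<M_1:=h^{\circ(m-1)}(M)$ for all $0\le j\le m-1$, and $M_1$ is a monotone increasing function of $M$.

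Finally, from $|\sin z|\le e^{|z|}$ we get $|g_j(z)|=|\lambda_j|\,|\sin z|\le M_1e^{|z|}$ for $n\le j\le n+m-1$. Substituting this into the composition above, starting from $|x_0|=|k_0|\pi/2=:R_0$ and iterating at most $m$ times, gives $|\phi_n(x_k)|\le\beta(M)$, where $\beta(M):=\max\{r_0,r_1,\dots,r_m\}$ with $r_0=R_0$ and $r_{i+1}=M_1e^{r_i}$; since each $r_i$ is increasing in $M_1$ and hence in $M$, the function $\beta:(0,+\infty)\to(0,+\infty)$ is monotone increasing, which is what we want. The reduction to the composition formula and the crude exponential estimates are routine; the one substantive point is that only $\lambda_n$ is bounded to begin with, so the successors $\lambda_{n+1},\dots,\lambda_{n+m-1}$ must be controlled as well — this is precisely where Lemma~\ref{Lip'} enters, the estimate degrading at each step but over only $m$ steps, hence by a fixed monotone amount.
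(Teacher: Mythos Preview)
Your argument is correct, and it takes a route that is genuinely different from, and in fact more direct than, the paper's own proof. The paper anchors the computation at $\lambda_n=\phi_n(x_{m-1})$ and runs the conjugacy \emph{backward} to obtain
\[
\phi_{n-m+k+1}(x_k)=g_{n-m+k+1}\circ\cdots\circ g_{n-1}(\lambda_n),
\]
which, together with Lemma~\ref{Lip'}, bounds $|\phi_{n-m+k+1}(x_k)|$ by a constant $\alpha(M)$. Since the time index $n-m+k+1$ is not the desired $n$, the paper then transfers this bound to $|\phi_n(x_k)|$ via a Teichm\"uller--space argument: the hyperbolic length of the shortest $(\mu_{n-m+k+1},Q_f)$--geodesic separating $\{0,\pi\}$ from $\{x_k,\infty\}$ is bounded below, and Lemma~\ref{Lip} propagates that lower bound to $\mu_n$.

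Your proof avoids this two--step structure by anchoring instead at the fixed point $\phi_{n+m-k}(x_0)=x_0$ (a direct consequence of Lemma~\ref{pre-cri}, available prior to this lemma) and running the conjugacy \emph{forward}, so that the composition lands exactly at $\phi_n(x_k)$ with no index mismatch and hence no need for the geodesic--length transfer. The price is that you must control the later parameters $\lambda_{n+1},\dots,\lambda_{n+m-1}$ rather than the earlier ones, but Lemma~\ref{Lip'} handles this forward direction just as well. The resulting bound is cruder (towers of exponentials instead of the implicit bound coming from the hyperbolic geometry), but only a monotone function of $M$ is required, so this is harmless.
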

\begin{proof}
We may assume that $n \ge m$ where $m$ is the length of the critical
periodic cycle of $f$. Suppose $|\lambda_{n}| < M$. Since
$\lambda_{n} = \phi_{n}(x_{m-1})$ and $f(x_{i+1}) = x_{i}$ for $0\le
i \le m-2$,  it follows that
$$
\phi_{n-m+k+1}(x_{k}) = g_{n-m+k+1}\circ \cdots \circ
g_{n-1}(\lambda_{n})
$$
holds for $0 \le k \le m-1$. This, together with Lemma~\ref{Lip'},
implies that there is a constant $0< \alpha(M) < \infty$ depending
only on $M$ such that
$$
|\phi_{n-m+k+1}(x_{k})| \le \alpha(M).
$$

Let $X = Q_{f}$. Let $\gamma$ be the shortest $(\mu_{n}, X)$-simple
closed geodesic which separates $(0, \pi)$ and $\{x_{k}, \infty\}$.
Then the above inequality implies that $$\|\gamma\|_{\mu_{n-m+k+1},
X}$$ has a positive lower bound depending only on $M$. This and
Lemma~\ref{Lip} imply that $\|\gamma\|_{\mu_{n}, X}$ has a positive
lower bound depending only on $M$.  Lemma~\ref{lam-bd} then follows.
\end{proof}
\begin{lemma}\label{key}
There exist $0 < \delta < M < \infty$  independent of $n$ such that
$\delta \le |\lambda_{n}| \le M$ for all $n \ge 0$.
\end{lemma}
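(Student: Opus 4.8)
The plan is to prove the lower and upper bounds separately, using three relations valid for every $j\ge 0$: (i) $\phi_j(x_{m-1})=\lambda_j$, the defining property of $\lambda_j$; (ii) $\phi_j(x_0)=x_0$ — since the identity $\lambda_j=\phi_j(x_{m-1})=\phi_j(f(x_0))=\lambda_j\sin(\phi_{j+1}(x_0))$ forces $\phi_{j+1}(x_0)$ to be a critical point of $\sin$, so by Lemma~\ref{pre-cri} and Lemma~\ref{symm} the point $x_0$ is the critical point lying in the cycle and $\phi_j$ fixes it; in particular $|x_0|\ge\pi/2$; and (iii) $\phi_j(x_i)=\lambda_j\sin\bigl(\phi_{j+1}(x_{i+1})\bigr)$ for $0\le i\le m-2$, from $\phi_j\circ f=g_j\circ\phi_{j+1}$ with $g_j(z)=\lambda_j\sin z$ and $f(x_{i+1})=x_i$. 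If $m=1$ the cycle is a single critical point, so $\lambda_j=\phi_j(x_0)=x_0$ is constant and there is nothing to prove; assume $m\ge 2$. Iterating (iii) down to $i=0$ and then using (i) expresses $x_0$ as the nested composition $\lambda_j\sin(\lambda_{j+1}\sin(\cdots\lambda_{j+m-2}\sin(\lambda_{j+m-1})\cdots))$ of the $m$ numbers $\lambda_j,\dots,\lambda_{j+m-1}$, with $m-1$ interposed sines.

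\emph{Lower bound.} I first claim $|\lambda_j|<\kappa_0$ cannot hold for $m$ consecutive values of $j$. If $|\lambda_n|,\dots,|\lambda_{n+m-1}|<\kappa_0$, then evaluating the nested composition from the inside out and using $|\sin w|\le 2|w|$ for $|w|\le 1$ (legitimate since, because $2\kappa_0<1$, every intermediate value stays below $\kappa_0$), one gets $|x_0|<2^{m-1}\kappa_0^m<1$, contradicting $|x_0|\ge\pi/2$. Now fix $n$; if $|\lambda_n|\ge\kappa_0$ we are done, so assume $|\lambda_n|<\kappa_0$ and let $j'<n$ be the largest index with $|\lambda_{j'}|\ge\kappa_0$, so $n-j'\le m-1$ and $|\lambda_{j'+1}|,\dots,|\lambda_n|<\kappa_0$. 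If $|\lambda_{j'}|>1/\kappa_0$, the large‑regime estimate of Lemma~\ref{Lip'} would give $|\lambda_{j'+1}|>|\lambda_{j'}|^{1/C_2}>1>\kappa_0$, a contradiction; hence $|\lambda_{j'}|\in[\kappa_0,1/\kappa_0]$, and Lemma~\ref{Lip'} yields $|\lambda_{j'+1}|>1/A$. Applying the small‑regime estimate of Lemma~\ref{Lip'} repeatedly — valid since $|\lambda_{j'+1}|,\dots,|\lambda_{n-1}|<\kappa_0$ — gives $|\lambda_n|>(1/A)^{C_2^{n-j'-1}}\ge(1/A)^{C_2^{m-2}}$. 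Thus $|\lambda_n|\ge\delta:=\min\{\kappa_0,(1/A)^{C_2^{m-2}}\}>0$ for all $n$.

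\emph{Upper bound.} This is the harder half; suppose for contradiction that $\sup_n|\lambda_n|=\infty$. A large $|\lambda_n|$ drives cycle points towards $\{k\pi:k\in\Bbb Z\}$: by (iii) with $i=0$, $|\sin(\phi_{n+1}(x_1))|=|x_0|/|\lambda_n|$, so once $|\lambda_n|$ exceeds a threshold depending only on $x_0,\epsilon_0,\kappa_0$ the point $\phi_{n+1}(x_1)$ lies within $\epsilon_0\kappa_0$ of some $k_1\pi$. Lemma~\ref{gap-t}, applied to $x_1$ and $k_1\pi$, then leaves two possibilities. If at some $l\ge n+1$ one has $\epsilon_0\kappa_0\le d(k_1\pi,\phi_l(x_1))<\kappa_0$, then (iii) at time $l-1$, namely $x_0=\lambda_{l-1}\sin(\phi_l(x_1))$, together with $|\sin(\phi_l(x_1))|\ge c\,\epsilon_0\kappa_0$ for an absolute $c>0$, forces $|\lambda_{l-1}|\le|x_0|/(c\,\epsilon_0\kappa_0)$, a uniform bound. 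If instead $\phi_l(x_1)$ stays within $\epsilon_0\kappa_0$ of $k_1\pi$ for every $l$, one uses Lemma~\ref{lam-bd} to bound $|\phi_l(x_1)|$, hence $|k_1\pi|$, in terms of the size of $\lambda_l$ on the block in question, and then propagates the pinning forward along the cycle through the relations (iii), invoking Lemma~\ref{gap-t} again at $x_2,x_3,\dots,x_{m-1}$; one is led to a contradiction either with the identity $|\phi_l(x_0)|=|x_0|$ being a fixed positive constant or with the dichotomy of Lemma~\ref{gap-t} itself. In both alternatives one extracts, from an arbitrarily large $|\lambda_n|$, a definite upper bound on some later $|\lambda_{l-1}|$ depending only on $f$ (through $x_0$) and on $\epsilon_0,\kappa_0,m$; feeding this back into Lemma~\ref{Lip'}, whose three regimes each change $\log|\lambda_j|$ by at most a bounded factor per step, rules out $\sup_n|\lambda_n|=\infty$. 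Hence $|\lambda_n|\le M$ for all $n$, and together with the lower bound this proves the lemma.

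The step I expect to be the main obstacle is the one inside the upper bound: controlling which multiple $k_i\pi$ each $\phi_l(x_i)$ is attracted to, and keeping the estimates for the successive cycle points from degrading as one goes around. This is precisely the point where the position estimate of Lemma~\ref{lam-bd} must be married to the pinning dichotomy of Lemma~\ref{gap-t}, which is why those two lemmas were established first; the lower bound, by contrast, rests only on the elementary nested‑sine estimate together with the recursion of Lemma~\ref{Lip'}.
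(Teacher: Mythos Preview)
Your lower bound is fine and essentially matches the paper's: the paper also argues that the composition identity
\[
g_{n}\circ\cdots\circ g_{n+m-1}(x_0)=x_0
\]
cannot hold if all of $|\lambda_{n}|,\dots,|\lambda_{n+m-1}|$ are small, and then invokes Lemma~\ref{Lip'} to pass from this to a uniform lower bound.

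The upper bound, however, has a genuine structural gap. You argue by contradiction, starting from an index $n$ with $|\lambda_n|$ arbitrarily large, and you try to conclude that $\sup_n|\lambda_n|=\infty$ is impossible. But neither branch of your dichotomy yields this. In the ``escapes'' branch you obtain a uniform bound on $|\lambda_{l-1}|$ for some later $l$; since $l-n$ is not controlled, Lemma~\ref{Lip'} (which only bounds the change of $\log|\lambda_j|$ \emph{per step}) cannot bridge back to bound $|\lambda_n|$ itself. In the ``stays forever'' branch you assert a contradiction by propagating to $x_2,x_3,\dots$, but here you need to know which multiple $k_2\pi$ the point $\phi_{l+1}(x_2)$ is near, and Lemma~\ref{lam-bd} cannot supply this: its hypothesis is a bound on $|\lambda_{l+1}|$, whereas you have just argued that $|\lambda_l|$ is large for \emph{every} $l$ in this branch. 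More basically, if you begin from an $n$ with $|\lambda_n|$ arbitrarily large, then Lemma~\ref{lam-bd} only gives $|\phi_{n+1}(x_1)|<\beta(|\lambda_n|^{C_2})$, so already the integer $k_1$ is not uniformly bounded. You anticipated exactly this difficulty in your final paragraph, but you have not resolved it.

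The paper gets around this by changing the target of the induction: it does \emph{not} try to bound $|\lambda_n|$ directly. Instead it first bounds $|\phi_n(x_1)|$ uniformly in $n$, then $|\phi_n(x_2)|$, and so on up to $|\phi_n(x_{m-1})|=|\lambda_n|$. For the first step it fixes a large threshold $D_0$ and looks at the \emph{first} time $n_0$ at which $|\lambda_{n_0}|$ enters the window $[D_0^{1/C_2},D_0)$ (such a time exists by Lemma~\ref{Lip'}, since $|\lambda_j|$ cannot jump over this window). Because $|\lambda_{n_0}|<D_0$, Lemma~\ref{lam-bd} now legitimately gives $|\phi_{n_0+1}(x_1)|<\beta(D_0^{C_2})$, so the integer $k$ in $d(\phi_{n_0+1}(x_1),k\pi)<\epsilon_0\kappa_0$ is uniformly bounded. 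The dichotomy of Lemma~\ref{gap-t} is then used, but \emph{not} to produce a contradiction: in both branches one simply concludes $|\phi_l(x_1)|<M_1$ on the relevant stretch of indices (in the ``stays'' branch because $\phi_l(x_1)$ stays near the same bounded $k\pi$; in the ``escapes'' branch because $|\lambda_l|$ drops back below $D_0^{1/C_2}$ and Lemma~\ref{lam-bd} applies again). Iterating over successive windows $n_0<n_1<\cdots$ covers all $n$ and gives $|\phi_n(x_1)|<M_1$ everywhere. Only after this is established does one pass to $x_2$, replacing the identity $x_0=\lambda_n\sin(\phi_{n+1}(x_1))$ by $\phi_n(x_1)=\lambda_n\sin(\phi_{n+1}(x_2))$ and using the just--proved bound on $|\phi_n(x_1)|$ in place of $|x_0|$.

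In short: the missing idea is to bound the \emph{positions} $|\phi_n(x_i)|$ inductively in $i$, anchoring each step at a time where $|\lambda_n|$ is moderate so that Lemma~\ref{lam-bd} can pin down the relevant integer $k$. Trying to bound $|\lambda_n|$ in one shot, as you do, runs into the circularity you yourself flagged.
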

\begin{proof}
Recall that the critical periodic cycle is labeled by $$x_{0},
\cdots, x_{m-1}$$ such that $f(x_{i+1}) = x_{i}$ for $0 \le i \le
m-2$ and $f(x_{0}) = x_{m-1}$ where $$x_{0} =  \pi/2+k_{0}\pi$$ for
some even integer $k_{0} \in \Bbb Z$. By Lemma~\ref{pre-cri} it
follows that
\begin{equation}\label{id-k1}
\phi_{n}(x_{0}) = x_{0}
\end{equation}
holds for all $n \ge 0$. This is the key of the whole proof.

By Lemma~\ref{symm} we have
\begin{equation}\label{com}
g_{n}\circ \cdots g_{n+m-1}(x_{0}) = x_{0}
\end{equation}
for every $n \ge m$.  By Lemma~\ref{Lip'}, if $|\lambda_{n}|$ is
small, then $$|\lambda_{n+l}|, \:\:0 \le l \le m-1,$$ are all small.
Since $g_{n+l}(z) = \lambda_{n+l} \sin(z)$ for all $l \ge 0$, it
follows from the above equation that there exists a $\delta > 0$ so
that $|\lambda_{n}| \ge \delta$ for all $n \ge 0$. It remains to
prove the sequence $\{\lambda_{n}\}$ is contained in a compact
subset of $\Bbb C$.

Let us first claim that there is an $0< M_{1} < \infty$ independent
of $n$ such that
$$
|\phi_{n}(x_{1})| < M_{1}
$$
holds for all $n \ge 0$. Let us prove the claim now. In fact, by
Lemma~\ref{symm} and (\ref{id-k1}) one has
\begin{equation}\label{id-k2}
\lambda_{n}\sin(\phi_{n+1}(x_{1})) = \phi_{n}(f(x_{1})) =
\phi_{n}(x_{0}) = x_{0}.
\end{equation}

Let $C_{2} >1$ and $A>1$ be the constants in Lemma~\ref{Lip'}. Let
$\epsilon_{0}>0$ and $\kappa_{0}>0$ be the constants  in
Lemma~\ref{gap-t}. Let
$$
D_{0} > \max\bigg\{|\lambda_{0}|^{C_{2}},
\bigg{(}\frac{2|x_{0}|}{\epsilon_{0}\kappa_{0}} \bigg{)}^{C_{2}},
A\bigg\}
$$
be large enough such that if $$|\sin(z)| \le
\frac{|x_{0}|}{D_{0}^{1/C_{2}}},$$ then
$$
d(z, k\pi) < \epsilon_{0} \kappa_{0}
$$
for some integer $k \in \Bbb Z$.

Assume that there exists an $n$ such that $|\lambda_{n}| > D_{0}$.
Otherwise one can take $M = D_{0}$.  By the choice of $D_{0}$, one
has $|\lambda_{0}|  < D_{0}^{1/C_{2}}$. It follows from
Lemma~\ref{Lip'} and the choice of $D_{0}$ that there is a least
integer $n_{0}\ge 1$ such that
\begin{equation}\label{rel}
D_{0}^{1/C_{2}} \le |\lambda_{n_{0}}| < D_{0}.
\end{equation}
Since $\lambda_{n_{0}} \sin(\phi_{n_{0}+1}(x_{1})) = x_{0}$, by the
choice of $D_{0}$ it follows that
\begin{equation}\label{d-c}
d(\phi_{n_{0}+1}(x_{1}), k\pi) < \epsilon_{0} \kappa_{0}
\end{equation}
where $k \in \Bbb Z$. It is easy to see that
\begin{equation}\label{CC-1}
|k| < \beta(D_{0}^{C_{2}}) +1.
\end{equation}
Let us prove (\ref{CC-1}) now.  In fact, By (\ref{rel}) and
Lemma~\ref{Lip'} it follows that $$|\lambda_{n_{0}+1}| <
D_{0}^{C_{2}}.$$ By Lemma~\ref{lam-bd}, it follows that
\begin{equation}\label{CC-2}
|\phi_{n_{0}+1}(x_{1})| < \beta(D_{0}^{C_{2}}).\end{equation}
  Since $0< \epsilon_{0}\kappa_{0} <1$ is very small,
 (\ref{CC-1}) then follows from
(\ref{d-c}) and  (\ref{CC-2}).

Now by (\ref{d-c}) and Lemma~\ref{gap-t}, either
$$
d(\phi_{l+1}(x_{1}), k\pi) < \epsilon_{0} \kappa_{0}
$$
holds for all $l \ge  n_{0}$  or there is a least integer $m_{1} >
n_{0}$ such that
$$
\epsilon_{0} \kappa_{0} \le  d(\phi_{m_{1}+1}(x_{1}), k\pi) <
\kappa_{0}.
$$
Since $\kappa_{0}$ is small, it follows that
$|\sin(\phi_{m_{1}+1}(x_{1}))| > \epsilon_{0}\kappa_{0}/2$. It
follows from (\ref{id-k2}) that
$$
|\lambda_{m_{1}}| < \frac{2|x_{0}|}{\epsilon_{0}\kappa_{0}} \le
D_{0}^{1/C_{2}}.
$$
By Lemma~\ref{Lip'} and the choice of $D_{0}$ there is a least
integer $n_{1}
> m_{1}$ such that
\begin{equation}\label{relc}
D_{0}^{1/C_{2}} \le |\lambda_{n_{1}}|
 < D_{0}.
 \end{equation}

We claim that there is a $M_{1}$  which depends only on $D_{0},
C_{2}$ such that for every $n_{0} \le l \le n_{1}$, one has
\begin{equation}\label{ind-fs}
|\phi_{l}(x_{1})| < M_{1}.
\end{equation}

Let us prove the claim. Note that $\phi_{n_{0}}(x_{1}) \le \beta
(D_{0})$ by (\ref{rel}), and when $n_{0} \le l \le m_{1}$, from the
above argument we have $d(\phi_{l+1}(x_{1}), k\pi) < \kappa_{0}$
where $k$ is some integer with $|k| < \beta(D_{0}^{C_{2}}) +1$ by
(\ref{CC-1}), and when $m_{1} < l \le n_{1}$, we have $|\lambda_{l}|
< D_{0}$ and thus by Lemma~\ref{lam-bd}, we have $|\phi_{l}(x_{1})|
< \beta(D_{0})$. This implies (\ref{ind-fs}) by taking $M_{1} =
(\beta(D_{0}^{C_{2}}) +1)\pi +1$.

Now in (\ref{rel}), we may replace $n_{0}$ by $n_{1}$ and repeat the
procedure from (\ref{rel}) to (\ref{relc}).  By induction, we either
stop at some  $n_{k}$ such that
$$
d(\phi_{l+1}(x_{1}), k\pi) < \epsilon_{0} \kappa_{0}
$$
holds for all $l \ge {n_{k}}$ where $k \in \Bbb Z$ with $|k| <
\beta(D_{0}^{C_{2}}) +1$ or get a sequence $n_{0} < n_{1} < n_{2}<
\cdots$.  From the above argument it follows that for both the
cases,
\begin{equation}\label{ind-fss}
|\phi_{n}(x_{1})| < M_{1}
\end{equation}
holds for all $n \ge 0$. This proves the claim proposed in the
beginning of the proof.

Now one can replace (\ref{id-k2}) by
\begin{equation}\label{id-k3}
\lambda_{n}\sin(\phi_{n+1}(x_{2})) = \phi_{n}(x_{1}).
\end{equation}
Note that in the argument to deduce (\ref{ind-fss}), what we need is
(\ref{id-k2}) and  the uniform bound of $\phi_{n}(x_{0})$ which is
identically equal to $x_{0}$ for all $n$. Since $|\phi_{n}(x_{1})| <
M_{1}$ for some $0< M_{1} < \infty$ independent of $n$, we can use
(\ref{id-k3}) and the same argument as above to get a constant $0<
M_{2}<\infty$ independent of $n$ such that $$|\phi_{n}(x_{2})| <
M_{2}$$ for all $n \ge 0$. By induction, we finally get an $0< M <
\infty$ independent of $n$ such that
$$
|\lambda_{n}| = |\phi_{n}(x_{m-1})| \le M
$$
holds for all $n \ge 0$. This completes the proof of
Lemma~\ref{key}.
\end{proof}
As a direct consequence of Lemmas~\ref{lam-bd} and \ref{key}, we get
\begin{corollary}\label{ldd-l}
There exist $0<\delta < M < \infty$ independent of $n$ such that
$$
\delta \le  \phi_{n}(x_{k}) \le M
$$
holds for all $0 \le k \le m-1$ and  $n \ge 0$.
\end{corollary}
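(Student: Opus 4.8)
The plan is to read off the upper bound directly from Lemmas~\ref{key} and \ref{lam-bd}, and to obtain the lower bound by propagating the uniform, nonzero value $|\phi_n(x_0)| = |x_0|$ around the critical cycle through the functional equations of Lemma~\ref{symm}. (Here, as in Lemma~\ref{lam-bd}, I read $\phi_n(x_k)$ as the quantity $|\phi_n(x_k)|$, which is what the bounds concern.) For the upper bound there is nothing to do beyond bookkeeping: Lemma~\ref{key} supplies an $M_0 < \infty$ with $|\lambda_n| \le M_0$ for all $n \ge 0$, and feeding $M = M_0$ into Lemma~\ref{lam-bd} immediately yields $|\phi_n(x_k)| < \beta(M_0)$ for all $0 \le k \le m-1$ and all $n \ge 0$; this $\beta(M_0)$ is the constant $M$ in the statement.

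For the lower bound I would induct on $k$. The input is the relation
$$
\phi_n(x_i) = \lambda_n \sin\big(\phi_{n+1}(x_{i+1})\big), \qquad 0 \le i \le m-2,
$$
which follows at once from $\phi_n \circ f = g_n \circ \phi_{n+1}$ with $g_n = \lambda_n\sin$ (Lemma~\ref{symm}) together with $f(x_{i+1}) = x_i$. The base case $k=0$ is $|\phi_n(x_0)| = |x_0| \ge \pi/2$ for all $n \ge 0$, by Lemma~\ref{pre-cri} (and by $\phi_0 = \mathrm{id}$ when $n=0$). For the inductive step, suppose $|\phi_n(x_i)| \ge c_i > 0$ for every $n \ge 0$. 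Using $|\lambda_n| \le M_0$ gives
$$
\big|\sin(\phi_{n+1}(x_{i+1}))\big| = \frac{|\phi_n(x_i)|}{|\lambda_n|} \ge \frac{c_i}{M_0} > 0
$$
for all $n \ge 0$. Since $\sin$ is continuous and $\sin 0 = 0$, there is an $r_i > 0$ depending only on $c_i/M_0$ such that $|\sin w| \ge c_i/M_0$ forces $|w| \ge r_i$; hence $|\phi_{n+1}(x_{i+1})| \ge r_i$ for all $n \ge 0$, and combining with $|\phi_0(x_{i+1})| = |x_{i+1}| > 0$ we get $|\phi_n(x_{i+1})| \ge c_{i+1} := \min\{r_i, |x_{i+1}|\} > 0$ for all $n \ge 0$. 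Running this for $i = 0, \dots, m-2$ and setting $\delta = \min\{c_0, \dots, c_{m-1}\}$ — intersected, if one wishes, with the lower bound on $|\lambda_n| = |\phi_n(x_{m-1})|$ already contained in Lemma~\ref{key} — completes the proof.

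Since every ingredient is already in place, I do not expect a genuine obstacle. The only point deserving a moment's care is the last elementary fact used in the induction: that a uniform lower bound on $|\sin(\phi_{n+1}(x_{i+1}))|$ translates into a uniform lower bound on $|\phi_{n+1}(x_{i+1})|$. This is true because $\sin$ is continuous at $0$ with $\sin 0 = 0$, so $|\sin w|$ bounded below forces $|w|$ bounded below; no information about the other zeros of $\sin$ is needed.
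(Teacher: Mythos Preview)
Your proof is correct and matches the paper's approach. The paper simply declares the corollary a ``direct consequence of Lemmas~\ref{lam-bd} and \ref{key}'' with no further detail; your upper-bound argument is exactly that, and for the lower bound---which is not literally contained in those two lemma statements---your induction via the functional equation $\phi_n(x_i)=\lambda_n\sin(\phi_{n+1}(x_{i+1}))$ together with $\phi_n(x_0)=x_0$ from Lemma~\ref{pre-cri} is the natural way to fill in what the authors left implicit.
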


\section{bounded geometry}
Consider the Teichm\"{u}ller space $T_{f}$  modeled on $({\Bbb C},
X)$ with $X = P_{f}$. For $b
> 0$ let $T_{f,b}$ be the subset of $T_{f}$  consisting of all the
elements  $[\mu]$ such that if $\phi:{\Bbb C}\to{\Bbb C}$ is the
quasiconformal homeomorphism which fixes $0$ and $\pi$ and which
solves the Beltrami equation given by $\mu$, then the spherical
distance between any two distinct points in $\phi(P_{f}) \cup
\{\infty\}$ is not less than $b$. The main tool we used is the short
geodesic argument in $\S8$ of \cite{DH}.  The situation here,
however,  is different from the case of rational maps which are of
finite degrees. We will see that it is Lemma~\ref{key}  which allows
us to adapt the short geodesic argument in \cite{DH} to the present
paper.

Let $\gamma$ be  a non-peripheral curve in ${\Bbb C}\setminus
P_{f}$. Let $\mu$ be a Beltrami coefficient on $\Bbb C$. Let
$\phi_{\mu}: {\Bbb C} \to {\Bbb C}$ be the quasiconformal
homeomorphism which solves the Beltrami equation given by $\mu$.
Recall that $\|\gamma\|_{\mu, P_{f}}$ is used to denote the
hyperbolic length of the simple closed geodesic in ${\Bbb
C}\setminus \phi_{\mu}(P_{f})$ which is homotopic to
$\phi_{\mu}(\gamma)$. Also recall that  $\gamma$ is called a $(\mu,
P_{f})$-geodesic if $\phi_{\mu}(\gamma)$ is a simple closed geodesic
in ${\Bbb C}\setminus \phi_{\mu}(P_{f})$.

For a hyperbolic Riemann surface $R$ and a simple closed geodesic
$\gamma$ in $R$, we use $l_{R}(\gamma)$ to denote the hyperbolic
length of $\gamma$ with respect to the hyperbolic metric in $R$.

Recall that $\{x_{0}, \cdots, x_{m-1}\}$ is the critical periodic
cycle of $f$ where  $$x_{0} = k_{0} \pi + \pi/2$$ with $k_{0}$ being
some even integer, and moreover, $f(x_{i+1}) = x_{i}$ for $0 \le i
\le m-2$ and $f(x_{0}) = x_{m-1}$. Let $$F_{n}(z) = g_{n}\circ
\cdots\circ g_{n+m-1}(z).$$ Then we have $ F_{n}(x_{0}) = x_{0}$. As
a direct consequence of  Lemma~\ref{key}, we have
\begin{lemma}\label{compt}
The sequence $\{F_{n}\}$ is compact.  More precisely,   for any
subsequence $\{F_{n_{k}}\}$, one can take a subsequence $F_{n_{k'}}$
of $F_{n_{k}}$ and a non-holomorphic entire function $F$ such that
$F_{n_{k'}}$ converge uniformly to $F$ in any compact subset of the
complex plane.
\end{lemma}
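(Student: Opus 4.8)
The plan is to establish the compactness of $\{F_n\}$ as a direct consequence of the uniform bounds on $\lambda_n$ coming from Lemma~\ref{key}, using the fact that each $F_n$ is an explicit composition of maps of the form $z \mapsto \lambda_{n+j}\sin(z)$. First I would record that by Lemma~\ref{key} there are constants $0 < \delta < M < \infty$ independent of $n$ with $\delta \le |\lambda_n| \le M$ for all $n \ge 0$. Since $F_n = g_n \circ \cdots \circ g_{n+m-1}$ with $g_{n+j}(z) = \lambda_{n+j}\sin(z)$, each coefficient appearing in the $m$-fold composition defining $F_n$ lies in the fixed compact annulus $\{\delta \le |\lambda| \le M\} \subset \Bbb C^{*}$.

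Next, for a fixed subsequence $\{F_{n_k}\}$, consider the $m$-tuples $(\lambda_{n_k}, \lambda_{n_k+1}, \dots, \lambda_{n_k+m-1})$. These all lie in the compact set $\{\delta \le |\lambda| \le M\}^m \subset (\Bbb C^{*})^m$, so by passing to a further subsequence $n_{k'}$ we may assume that $\lambda_{n_{k'}+j} \to \lambda_{\infty,j}$ for each $0 \le j \le m-1$, with $\delta \le |\lambda_{\infty,j}| \le M$, in particular $\lambda_{\infty,j} \ne 0$. Set
$$
F(z) = (\lambda_{\infty,0}\sin)\circ(\lambda_{\infty,1}\sin)\circ \cdots \circ (\lambda_{\infty,m-1}\sin)(z).
$$
Then $F$ is an entire function, and it is non-constant (hence transcendental, and in the terminology of the statement ``non-holomorphic'' in the trivial sense only if a typo; here it is simply a genuine entire map) because $\sin$ is non-constant and the $\lambda_{\infty,j}$ are nonzero, so the composition is non-constant. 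To see the uniform convergence $F_{n_{k'}} \to F$ on compact sets, I would argue by induction on the number of factors: $\lambda_{n_{k'}+m-1}\sin(z) \to \lambda_{\infty,m-1}\sin(z)$ uniformly on any disk $\{|z| \le R\}$ since the coefficients converge and $\sin$ is bounded there; and if $h_{k'} \to h$ uniformly on compact sets with $h$ entire, then for the next factor $g = \lambda_{n_{k'}+j}\sin$ one has $g \circ h_{k'} \to (\lambda_{\infty,j}\sin)\circ h$ uniformly on $\{|z|\le R\}$, because $h_{k'}(\{|z|\le R\})$ lies in a fixed bounded set for large $k'$, on which $\sin$ is uniformly continuous, and $|\lambda_{n_{k'}+j}|$ is bounded. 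Iterating this $m$ times gives $F_{n_{k'}} \to F$ uniformly on every compact subset of $\Bbb C$.

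Finally, $F(x_0) = x_0$ is automatic: each $F_{n}$ satisfies $F_n(x_0) = x_0$ by (\ref{com}) (equivalently by Lemma~\ref{symm} together with $\phi_n(x_0) = x_0$ from Lemma~\ref{pre-cri}), and this passes to the locally uniform limit. I do not expect any genuine obstacle here; the only point requiring care is the inductive uniform-convergence step for compositions, where one must keep track that the images $h_{k'}(\{|z|\le R\})$ remain uniformly bounded---this follows from the locally uniform convergence of $h_{k'}$ to the entire function $h$. Everything else is a direct packaging of Lemma~\ref{key} with the normality of families of uniformly bounded-coefficient finite compositions of $\sin$.
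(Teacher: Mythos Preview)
Your proposal is correct and is exactly the argument the paper has in mind: the paper gives no proof beyond stating that the lemma is ``a direct consequence of Lemma~\ref{key}'', and you have correctly unpacked this by extracting a convergent subsequence of the coefficient $m$-tuples $(\lambda_{n_k},\dots,\lambda_{n_k+m-1})$ in the compact annulus $\{\delta\le|\lambda|\le M\}^m$ and passing the composition of sines to the limit. Your remark that ``non-holomorphic'' is almost certainly a typo for ``non-constant'' is also right.
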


 For $r > 0$, let $V = B_{r}(x_{0})$ denote the Euclidean disk with center
$x_{0}$ and radius $r$. For every $n \ge 0$, let $$\Omega_{n}=
\{z\:\big{|}\:F_{n}'(z) = 0\}$$ denote the set of the critical
points of $F_{n}$.  Let $P_{n} = F_{n}(\Omega_{n})$ denote the set
of the critical values of $F_{n}$. It is easy to see that $P_{n} =
\phi_{n}(P_{f})$. Therefore one can choose $r
> 0$ so that
$$
\big{(}\bigcup_{n\ge 0}  P_{n} \big{)} \cap \partial B_{r}(x_{0}) =
\emptyset.
$$

Let $U_{n}$ denote the component of $F_{n}^{-1}(V)$ which contains
$x_{0}$. Then $U_{n}$ is simply connected and the map $$F_{n}: U_{n}
\to V$$ is a holomorphic branched covering map.  By
Lemma~\ref{compt}, we have
\begin{lemma}\label{domain}
 There exist an $R
> 1$ and an integer $N \ge 1$ independent of $n$ such that
\begin{equation}\label{uni-in}
B_{1/R}(x_{0}) \subset U_{n} \subset B_{R}(x_{0}).
\end{equation}
 and
$$\big{|}F_{n}^{-1}(P_{n}) \cap U_{n}\big{|}\le N.$$
\end{lemma}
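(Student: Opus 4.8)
The plan is to establish the two inclusions of (\ref{uni-in}) and the bound on $|F_n^{-1}(P_n)\cap U_n|$ by one device: a normal-family / contradiction argument powered by Lemma~\ref{compt}. Along any subsequence Lemma~\ref{compt} gives a locally uniform limit $F=\lim F_{n_k}$ which is a non-constant entire function; after a further subsequence we may assume, by Lemma~\ref{key}, that $\lambda_{n_k+j}\to\lambda^{(j)}$ with $\delta\le|\lambda^{(j)}|\le M$ for $0\le j\le m-1$, so $F=(\lambda^{(0)}\sin)\circ\cdots\circ(\lambda^{(m-1)}\sin)$; moreover $F(x_0)=x_0$ (each $F_{n}(x_0)=x_0$) and $F'(x_0)=0$, since $\lambda^{(m-1)}\sin$ has a critical point at $x_0=\pi/2+k_0\pi$. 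At the outset we also shrink $r$ so that, in addition to $\partial B_r(x_0)\cap\bigcup_n P_n=\emptyset$, we have $r<\delta$; this rules out only countably many $r$. For the \emph{lower inclusion}: if no uniform $R$ works on the left, there are $n_k\to\infty$ and $w_k\in\partial U_{n_k}$ with $w_k\to x_0$; since $\partial U_{n_k}\subset F_{n_k}^{-1}(\partial V)$, $F_{n_k}(w_k)\in\partial V$, while passing to a subsequence with $F_{n_k}\to F$ locally uniformly gives $F_{n_k}(w_k)\to F(x_0)=x_0$, so $x_0\in\partial V$ — impossible, as $x_0$ is the centre of $V$.

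\emph{The upper inclusion.} Suppose it fails: there are $n_k\to\infty$ and $z_k\in U_{n_k}$ with $|z_k-x_0|\to\infty$. Fix $\rho>0$. For $k$ large, an arc in the domain $U_{n_k}$ from $x_0$ to $z_k$ contains a sub-continuum $C_k\subset\overline{U_{n_k}}\cap\overline{B_\rho(x_0)}$ joining $x_0$ to $\partial B_\rho(x_0)$; by the Blaschke selection theorem a Hausdorff subsequential limit $C$ of the $C_k$ is a continuum in $\overline{B_\rho(x_0)}$ still joining $x_0$ to $\partial B_\rho(x_0)$, and since $F_{n_k}\to F$ uniformly on $\overline{B_\rho(x_0)}$ and $F_{n_k}(C_k)\subset V$, we get $F(C)\subset\overline V$, i.e.\ $C\subset F^{-1}(\overline V)$. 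Letting $\rho\to\infty$ and using the standard fact that a nested family of continua through $x_0$, each meeting $\partial B_\rho(x_0)$, has unbounded union, we conclude that the component of $F^{-1}(\overline V)$ containing $x_0$ is unbounded. It therefore suffices to show that this component is \emph{bounded}.

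To do this I chase the component back through the composition. Put $\tilde B_0=\overline V$, $\tilde B_{j+1}=(\lambda^{(j)}\sin)^{-1}(\tilde B_j)$, so $\tilde B_m=F^{-1}(\overline V)$; let $q_0=x_0$ and $q_j=\lim_k\phi_{n_k+j}(x_j)$ be the corresponding cyclic orbit point, so $\lambda^{(j)}\sin(q_{j+1})=q_j$ (from $\phi_{n+j}\circ f=(\lambda_{n+j}\sin)\circ\phi_{n+j+1}$) and $\delta\le|q_j|\le M$ by Corollary~\ref{ldd-l}. Pulling the component of $x_0$ back one factor at a time, it is contained at each stage in the component $\tilde B_j^{*}$ of $q_j$ in $\tilde B_j$, so it is enough to bound the $\tilde B_j^{*}$. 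Since $r<\delta\le|\lambda^{(0)}|$, the disc $(\lambda^{(0)})^{-1}\overline V$ has diameter $2r/|\lambda^{(0)}|<2$, and one invokes the elementary fact that $\sin^{-1}(E)$ has only bounded components whenever $E$ is connected of diameter $<2$ (such $E$ misses one of the rays $[1,\infty)$, $(-\infty,-1]$, hence $\sin^{-1}(E)$ misses a periodic family of vertical lines), together with its quantitative refinement: the component of $\sin^{-1}(E)$ through a prescribed point has diameter $\le C_*\max\{\operatorname{diam}(E),\operatorname{diam}(E)^{1/2}\}$ for an absolute $C_*$ (using the circumscribed disc of $E$, the estimate $|\arcsin'|=|1-w^2|^{-1/2}$ away from $\pm1$, and the square-root coordinate $w\mapsto\sqrt{2(1-w)}$ near $\pm1$). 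Rescaling by $\lambda^{(j)}$ (harmless, $|\lambda^{(j)}|\ge\delta$) gives the recursion $\operatorname{diam}(\tilde B_{j+1}^{*})\le C_*\bigl(\operatorname{diam}(\tilde B_j^{*})/\delta\bigr)^{1/2}$; choosing $r$ small (depending only on $\delta$, $C_*$, $m$) keeps every $\tilde B_j^{*}$ of diameter $<1$, so $\tilde B_m^{*}\ni x_0$ is bounded, contradicting the unboundedness found above. A diagonal argument then makes the bound uniform in $n$. \emph{The main obstacle is exactly this quantitative step}: the branches of $\sin^{-1}$ have unbounded derivative near $\pm1$, so a crude estimate lets $\operatorname{diam}(\tilde B_{j+1}^{*})$ blow up; one must exploit that whenever $(\lambda^{(j)})^{-1}\tilde B_j^{*}$ approaches $\pm1$ the preimage point $q_{j+1}$ is forced (boundedly, by Corollary~\ref{ldd-l}) near a critical point of $\sin$, so the component there is of square-root type and stays small. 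Passing first to the single limit function $F$ via Lemma~\ref{compt} is what makes this bookkeeping manageable.

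\emph{The bound on $|F_n^{-1}(P_n)\cap U_n|$.} Suppose it is unbounded along some $n_k$. By the two inclusions, now established uniformly, $B_{1/R}(x_0)\subset U_{n_k}\subset B_R(x_0)$; pass to a subsequence with $F_{n_k}\to F$ locally uniformly and $P_{n_k}\to P_\infty$ in the Hausdorff metric, $|P_\infty|\le|P_f|$. Since $F$ is non-constant and $\overline{B_R(x_0)}$ is compact, $F^{-1}(P_\infty)\cap\overline{B_R(x_0)}$ is a finite set at each of whose points $F-p$ ($p\in P_\infty$) has a finite vanishing order; by Hurwitz's theorem, for $k$ large the number of solutions in $\overline{B_R(x_0)}$ of $F_{n_k}(z)=p'$, summed over $p'\in P_{n_k}$, is at most the corresponding finite sum of these orders, a bound independent of $k$ — contradicting the assumed divergence. (Alternatively, since $U_n\subset B_R(x_0)$ is bounded, $F_n|_{U_n}\colon U_n\to V$ is a proper holomorphic map, hence a branched cover whose degree one bounds uniformly by tracing a preimage through the composition — each intermediate image lies in a fixed disc because $|\lambda_{n+j}|\le M$, and $\lambda_{n+j}\sin$ has $O(1)$ preimages of a point there — and then $|F_n^{-1}(P_n)\cap U_n|\le|P_f|\cdot\deg(F_n|_{U_n})$.) Either way $N$ is independent of $n$, completing the proof.
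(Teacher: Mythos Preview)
Your argument is correct and follows the route the paper indicates: everything is extracted from the compactness of $\{F_n\}$ given by Lemma~\ref{compt}, though the paper compresses its entire proof into the single clause ``By Lemma~\ref{compt}, we have.'' Your recursion through the $m$ factors of $F$, together with the smallness condition on $r$ needed so that each $(\lambda^{(j)}\sin)^{-1}$ has only bounded components, makes explicit the real work hidden behind that one-liner.
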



Let $\mu_{n}$ and $\phi_{n}$ be as defined in  the beginning of
$\S5$.
Let
$$
X_{n} = U_{n}\setminus \phi_{m+n}(P_{f}), \:\:Y_{n} = V\setminus
\phi_{n}(P_{f}),\:\: Z_{n} = {\Bbb C}\setminus \phi_{n}(P_{f}).
$$
By the Collaring Theorem (for instance, see Theorem A.1 of
\cite{HSS}), we have
\begin{lemma}\label{collar}
There exist constants  $\epsilon > 0$  and $C > 0$ such that if
$\gamma$ and $\xi$ are  simple closed geodesics in $Z_{n}$ and
$Z_{m+n}$  which encloses $\phi_{n}(x_{0}) = x_{0}$ in their inside
with $l_{Z_{n}}(\gamma) < \epsilon$ and $l_{Z_{m+n}}(\xi) <
\epsilon$, then $\gamma$ belongs to $Y_{n}$, and $\xi \subset
X_{n}$, and moreover,
$$
l^{-1}_{X_{n}}(\eta) <  l^{-1}_{Z_{m+n}}(\xi) < l^{-1}_{X_{n}}(\eta)
+ C
$$
and
$$
l^{-1}_{Y_{n}}(\omega) <  l^{-1}_{Z_{n}}(\gamma) <
l^{-1}_{Y_{n}}(\omega) + C,
$$
where $\omega$ and $\eta$ are respectively the simple closed
geodesics in $X_{n}$ and $Y_{n}$ which are homotopic to $\gamma$ and
$\xi$.
\end{lemma}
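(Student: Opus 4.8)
\emph{Proof proposal.} The statement is an application of the Collaring Theorem (Theorem A.1 of \cite{HSS}) to the inclusions of hyperbolic Riemann surfaces $X_{n}\subset Z_{m+n}$ and $Y_{n}\subset Z_{n}$. Two things have to be supplied: first, the hypothesis of that theorem, namely that a geodesic which is short in the ambient surface $Z_{m+n}$ (resp. $Z_{n}$) lies, together with its standard collar on the inside, inside the subsurface $X_{n}$ (resp. $Y_{n}$); and second, that the resulting constants $\epsilon$ and $C$ can be chosen independent of $n$. I treat the pair $(\xi,\eta,Z_{m+n},X_{n})$; the pair $(\gamma,\omega,Z_{n},Y_{n})$ is handled identically.

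\emph{Confinement.} Suppose $l_{Z_{m+n}}(\xi)<\epsilon$. By the Collar Lemma the standard collar of $\xi$ in $Z_{m+n}$ is embedded and has modulus at least $\pi/\epsilon-C_{0}$ for a universal $C_{0}$, which we make as large as we wish by shrinking $\epsilon$. The punctures of $Z_{m+n}$ enclosed by $\xi$, which include $x_{0}$, must then form a Euclidean cluster of diameter less than $1/R$ (a more spread-out set would not admit a thick collar), so $\xi$ and the Jordan domain it bounds are contained in $B_{1/R}(x_{0})$; since by Lemma~\ref{domain} we have $B_{1/R}(x_{0})\subset U_{n}$ with $U_{n}$ simply connected, it follows that $\xi$, the domain it bounds, and its inner half-collar all lie in $U_{n}$, hence in $X_{n}=U_{n}\setminus\phi_{m+n}(P_{f})$. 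This proves the assertion $\xi\subset X_{n}$ and shows $\xi$ sits uniformly deep inside $X_{n}$; in particular, when the class of $\xi$ is non-peripheral in $X_{n}$ (otherwise there is nothing to prove), the geodesic $\eta$ is defined.

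\emph{Length comparison.} With confinement in hand the first displayed inequality follows. The left half $l^{-1}_{X_{n}}(\eta)<l^{-1}_{Z_{m+n}}(\xi)$ is monotonicity of the hyperbolic metric under the proper inclusion $X_{n}\subset Z_{m+n}$. The right half $l^{-1}_{Z_{m+n}}(\xi)<l^{-1}_{X_{n}}(\eta)+C$ is the Collaring Theorem: passing from $X_{n}$ to $Z_{m+n}$ only enlarges the room available for annuli around $\xi$ on the \emph{outside} of $\xi$, and the extra modulus gained there is bounded because, by Lemmas~\ref{domain} and \ref{key}, $\partial U_{n}$ stays at Euclidean distance at least $1/R$ from $x_{0}$, $U_{n}$ contains at most $N$ marked points, and $Z_{m+n}$ carries the fixed puncture $-x_{0}=\phi_{m+n}(-x_{0})$; hence the reduced modulus of $Z_{m+n}\setminus\overline{U_{n}}$ relative to $\xi$ is bounded by a constant independent of $n$, giving $C$. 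The uniformity of $\epsilon$ and $C$ follows the same way: $R$, $N$, $r$ are $n$-independent by Lemma~\ref{domain}, and the conformal types of the surfaces $X_{n}\subset Z_{m+n}$ and $Y_{n}\subset Z_{n}$ vary in a compact family by Lemma~\ref{compt}.

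The main obstacle is the confinement step, i.e. deducing the extrinsic fact that $\xi$ lies in the Euclidean disk $U_{n}$ from the intrinsic fact that it is short in $Z_{m+n}$. This rests squarely on the $n$-independent inclusions $B_{1/R}(x_{0})\subset U_{n}\subset B_{R}(x_{0})$ and the bound $|\phi_{m+n}(P_{f})\cap U_{n}|\le N$ of Lemma~\ref{domain}, which in turn come from the boundedness of $\{\lambda_{n}\}$ established in Lemma~\ref{key}; without that control the domains $U_{n}$ could degenerate and a short geodesic about $x_{0}$ would not be trapped. This is precisely the new difficulty, absent in the finite-degree rational setting of \cite{DH}, that the present section must handle.
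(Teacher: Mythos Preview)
Your proposal is correct and is essentially an elaboration of the paper's own argument, which consists of a single sentence: ``By the Collaring Theorem (for instance, see Theorem A.1 of \cite{HSS}), we have\ldots'' --- the paper states the lemma as a direct consequence of that theorem and gives no further proof. Your write-up supplies the details the paper omits: the confinement of a short $Z$-geodesic around $x_{0}$ inside the fixed Euclidean disk $B_{1/R}(x_{0})\subset U_{n}$ (resp.\ $B_{r}(x_{0})=V$) via the collar modulus and the uniform bounds of Lemma~\ref{domain} and Corollary~\ref{ldd-l}, followed by the standard Schwarz--Pick monotonicity for the left inequality and the Collaring Theorem for the additive constant on the right, with uniformity in $n$ coming from Lemma~\ref{key}. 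One small imprecision worth tightening: the sentence ``the punctures\ldots form a Euclidean cluster of diameter less than $1/R$, so $\xi$ and the Jordan domain it bounds are contained in $B_{1/R}(x_{0})$'' conflates the cluster of punctures with the curve $\xi$ itself; to get $\xi\subset B_{1/R}(x_{0})$ you should apply the modulus estimate to the \emph{outer} half-collar, which separates the closed inside of $\xi$ from an exterior puncture $p\in\phi_{m+n}(P_{f})$ (bounded by Corollary~\ref{ldd-l}) together with $\infty$, and you should not assume that this exterior puncture is $-x_{0}$ specifically.
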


Let $W_{n} = U_{n}\setminus F_{n}^{-1}(P_{n})$. It follows that
$W_{n} \subset X_{n}$ and by Lemma~\ref{domain} $|X_{n}\setminus
W_{n}| \le N$ for some integer $N \ge 0$ independent of $n$. By
Theorem 7.1 of \cite{DH}, for every short simple closed geodesic
$\eta$ in $X_{n}$ with length $l_{X_{n}}(\eta) < \epsilon$, we have
\begin{equation}\label{p-p}
l^{-1}_{X_{n}}(\eta) < \sum_{\eta'} l^{-1}_{W_{n}}(\eta') + C
\end{equation}
where the sum is taken over all the simple closed geodesic in
$W_{n}$ which are homotopic to $\eta$ in $X$ and which have length
less than $\epsilon$.

Let
$$
s_{n} = \sum_{\gamma} l^{-1}_{Z_{n}}(\gamma)
$$
where  the sum is taken over all the simple closed geodesics in
$Z_{n}$  which have length less than $\epsilon$.
\begin{lemma}\label{alt-b}
There is an $0< M < \infty$ such that $0\le s_{n} \le M$.
\end{lemma}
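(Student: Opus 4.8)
The plan is to bound $s_n$ by transporting the sum of reciprocal lengths of short geodesics in $Z_{n+m}$ to a sum in $Z_n$, using the fact that the map $F_n\colon U_n\to V$ is a branched covering of uniformly bounded degree whose domain $U_n$ and critical-value set $\phi_n(P_f)$ have uniformly controlled geometry (Lemmas~\ref{compt}, \ref{domain}, \ref{collar}). The key point is that only geodesics enclosing $x_0$ can be very short for infinitely many $n$ in a way that forces $s_n$ to blow up, because away from $x_0$ the post-critical points $\phi_n(P_f)$ are uniformly separated from each other and from $\{0,\pi,\infty\}$ by Corollary~\ref{ldd-l}; thus there is a uniform lower bound $\epsilon_1>0$ on the length of any short simple closed $(\mu_n,P_f)$-geodesic that does \emph{not} enclose $x_0$, so those contribute at most a fixed finite amount to $s_n$. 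It remains to control the contribution of the geodesics that do enclose $x_0$.

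\textbf{The main steps.} First I would split $s_n = s_n' + s_n''$, where $s_n'$ sums $l^{-1}_{Z_n}(\gamma)$ over short geodesics not enclosing $x_0$ and $s_n''$ over those enclosing $x_0$. By Corollary~\ref{ldd-l} there are only finitely many homotopy classes of the first type that can be short at all, and each contributes a bounded amount (this uses that $\phi_n(P_f)$ stays in a fixed annulus $\delta\le|w|\le M$ with fixed angular separations, up to the $\pi$-periodicity, so the hyperbolic lengths in $Z_n$ of those classes are bounded below); hence $s_n'\le M_0$ for a uniform $M_0$. Second, for $s_n''$: any short geodesic $\gamma$ in $Z_n$ enclosing $x_0$ lies in $Y_n$ by Lemma~\ref{collar}, and any short geodesic $\xi$ in $Z_{n+m}$ enclosing $x_0$ lies in $X_n$; by the push-forward/pull-back of geodesics under $F_n$ together with (\ref{p-p}) and the collar estimates, one gets
$$
s_{n+m}'' \;\le\; \sum_{\xi\subset X_n} l^{-1}_{X_n}(\xi) + C_1
\;\le\; \sum_{\eta\subset Y_n} l^{-1}_{Y_n}(\eta) + C_2
\;\le\; s_n'' + C_3,
$$
where each constant depends only on $N$, $R$, $\epsilon$, $C$ and the uniform geometry supplied by Lemmas~\ref{domain} and \ref{collar}; here one uses that $F_n$ has degree $\le$ (some fixed $d$) on $U_n$, so pulling a geodesic in $Y_n$ back through $F_n$ produces at most $d$ geodesics in $X_n$ whose reciprocal lengths add up to at most a constant times the original. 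Combining, $s_{n+m}\le s_n + C_4$.

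\textbf{Concluding the bound.} The naive recursion $s_{n+m}\le s_n+C_4$ only gives linear growth, so the real work is to replace $C_4$ by a genuine contraction: I would instead show $s_{n+m}''\le \theta\, s_n'' + C_5$ for some fixed $\theta<1$, exploiting that $F_n\colon U_n\to V$ is an honest branched cover of degree $\ge 2$ at $x_0$ (since $x_0$ is a critical point of $f$), so a short geodesic around $x_0$ in $Y_n$ pulls back to a \emph{longer} geodesic around $x_0$ in $X_n$ — the reciprocal length gets divided by the local degree, producing the contraction factor $\theta=1/(\text{local degree})\le 1/2$. Once $s_{n+m}''\le \tfrac12 s_n''+C_5$ is established, iterating over the $m$ residue classes mod $m$ gives $s_n''\le 2C_5 + \max_{0\le j<m} s_j''$, which is a finite bound independent of $n$; adding $s_n'\le M_0$ yields $0\le s_n\le M$. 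The hard part is precisely this last contraction estimate: one must check that the collar/push-forward errors $C_1,\dots,C_5$ really are uniform in $n$ (this is where Lemma~\ref{key} is indispensable, since it is what keeps $U_n$, $F_n$, and $\phi_n(P_f)$ in a compact family) and that the local degree at $x_0$ is bounded below by $2$ uniformly — which holds because $\phi_{n+1}$ maps the critical point $\pi/2+k_0\pi$ to itself (Lemma~\ref{pre-cri}) so $F_n$ is critical at $x_0$ with multiplicity equal to that of $f^m$ at $x_0$, a fixed integer $\ge 2$.
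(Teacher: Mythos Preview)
Your contraction step for the geodesics enclosing $x_0$ --- passing through the chain $Z_{n+m}\to X_n\to W_n\to Y_n\to Z_n$ via Lemma~\ref{collar}, inequality~(\ref{p-p}), and the covering $F_n:W_n\to Y_n$, and extracting the factor $\tfrac12$ from the local degree of $F_n$ at $x_0$ --- is exactly the mechanism the paper uses. The paper also first reduces to the subsequence $\{s_{km}\}$ via the Lipschitz property of Lemma~\ref{ff}, which you do implicitly at the end. So the heart of your argument matches the paper's.

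There is, however, a genuine gap in your treatment of $s_n'$. You invoke Corollary~\ref{ldd-l} to claim that the points of $\phi_n(P_f)$ have ``fixed angular separations'' and hence that any non-$x_0$-enclosing geodesic has length bounded below. But Corollary~\ref{ldd-l} only asserts $\delta\le|\phi_n(x_k)|\le M$, i.e.\ separation of each post-critical point from $0$ and $\infty$; it says nothing whatsoever about the pairwise distances $|\phi_n(x_i)-\phi_n(x_j)|$ for $i\ne j$. Two such points could a priori collide, producing an arbitrarily short geodesic that does not enclose $x_0$. Controlling those pairwise distances is precisely the content of bounded geometry (Lemma~\ref{bounded-geometry}), which in the paper is \emph{deduced from} the present lemma via Corollary~\ref{b-0}. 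So your argument for $s_n'\le M_0$ is circular.

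The paper sidesteps this entirely: it does not split $s_n$ into $s_n'+s_n''$, but runs the collar/push-forward argument on the full sum, obtaining directly $s_{(k+1)m}\le\tfrac12\,s_{km}+C$ and hence boundedness. If you want to keep your decomposition, one non-circular route for $s_n'$ is to transport a short geodesic near $\phi_n(x_i)$ forward under $g_{n-i}\circ\cdots\circ g_{n-1}$ to a curve near $\phi_{n-i}(x_0)=x_0$ in $Z_{n-i}$ (this is the manoeuvre used later in the proof of Lemma~\ref{bounded-geometry}), so that a bound on the $x_0$-enclosing contributions at all times automatically controls the rest; but this is extra work that the paper's direct approach does not need.
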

\begin{proof}
By Proposition 7.2 of \cite{DH} or Lemma~\ref{ff} of this paper, it
suffices to prove that there exists a constant $0< M < \infty$ such
that $0\le s_{km} \le M$ for all $k \ge 0$. Let us prove this as
follows. By Lemma~\ref{collar} we have
$$
s_{(k+1)m}= \sum_{\gamma} l^{-1}_{Z_{(k+1)m}}(\xi) \le \sum_{\eta}
l_{X_{km}}(\eta) + C.
$$
by (\ref{p-p}) we have
$$
l^{-1}_{X_{km}}(\eta) < \sum_{\eta'} l^{-1}_{W_{km}}(\eta') + C
$$

Since $F_{km}: W_{km} \to Y_{km}$ is a holomorphic covering map, it
follows that $F_{km}(\eta')$ must be a short simple closed geodesic
in $Y_{km}$ which enclose $x_{0}$ in its inside. Since $F$ is
holomorphic in the inside of each $eta'$ and since for any two
distinct $\eta'$,  one must be contained in the inside of the other,
it follows that  $F_{km}(\eta'_{1}) \ne F_{km}(\eta'_{2})$ if
$\eta_{1}' \ne \eta_{2}'$. This implies that
$$
\sum_{\eta'} l^{-1}_{W_{km}}(\eta') \le \frac{1}{2}\sum_{\omega}
l^{-1}_{Y_{km}}(\omega) + C
$$
where the second sum is taken over all the short simple closed
geodesics $\omega$ which enclose $x_{0}$ in their inside and with
length less than $\epsilon$. These, together with
Lemma~\ref{collar}, implies that
$$
s_{(k+1)m} \le \frac{1}{2} s_{km} + C.
$$
This implies that $\{s_{km}\}$ is bounded and thus completes the
proof of Lemma~\ref{alt-b}.
\end{proof}

As a direct consequence of Lemma~\ref{alt-b}, we have
\begin{corollary}\label{b-0}
There exits a $\delta > 0$ such that
$$
 d(\phi_{n}(x_{0}), \phi_{n}(x)) =d(x_{0}, \phi_{n}(x))
> \delta
$$
holds for all $n \ge 0$ and $x \in P_{f}$ with $x \ne x_{0}$.
\end{corollary}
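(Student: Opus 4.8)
The plan is to derive Corollary~\ref{b-0} as a quantitative consequence of Lemma~\ref{alt-b}, using the collaring/short-geodesic machinery already set up. The content of Corollary~\ref{b-0} is that the point $x_0 = \phi_n(x_0)$ stays a definite Euclidean distance away from every other point of $\phi_n(P_f)$, uniformly in $n$. The contrapositive is the natural thing to prove: if for some $n$ and some $x \in P_f$, $x \ne x_0$, the distance $d(x_0, \phi_n(x))$ were very small, then there would be a very short simple closed geodesic in $Z_n = \mathbb{C}\setminus \phi_n(P_f)$ enclosing exactly the two points $x_0$ and $\phi_n(x)$. Its reciprocal length would then be large, forcing $s_n$ to be large, contradicting the uniform bound $s_n \le M$ from Lemma~\ref{alt-b}.

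First I would make precise the elementary fact that a pair of punctures at small Euclidean distance $\rho$ in the plane, with all other punctures staying a definite distance away (which is guaranteed by Corollary~\ref{ldd-l}: all the $\phi_n(x_k)$ lie in a fixed compact annulus $\delta \le |\cdot| \le M$, and the points $k\pi$ are fixed), forces the existence of a small round annulus separating $\{x_0, \phi_n(x)\}$ from the rest of $\phi_n(P_f) \cup \{\infty\}$ whose modulus tends to infinity as $\rho \to 0$. The core geodesic of this annulus is a simple closed geodesic $\gamma$ in $Z_n$ with $l_{Z_n}(\gamma) = O(1/\log(1/\rho)) \to 0$. In particular, once $\rho$ is small enough (depending only on the fixed separation constants, not on $n$), we get $l_{Z_n}(\gamma) < \epsilon$ with $l^{-1}_{Z_n}(\gamma)$ arbitrarily large; this single term already exceeds $M$, contradicting $s_n \le M$. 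Hence there is a uniform lower bound $\delta > 0$ for $d(x_0, \phi_n(x))$.

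The main technical point to be careful about is that the separation of the \emph{other} punctures from the pair $\{x_0, \phi_n(x)\}$ must itself be uniform in $n$: this is exactly what Corollary~\ref{ldd-l} provides for the critically-periodic points, and the invariance $\phi_n(k\pi) = k\pi$ together with the fact that $P_f$ is finite handles the rest of $P_f$. One should also note that $P_f$ contains $x_0$ together with finitely many other points, and as $n$ varies, $\phi_n$ permutes nothing canonically — so the statement is really "for each fixed $x \in P_f \setminus \{x_0\}$", and since $P_f$ is finite one may take the minimum of the finitely many resulting $\delta$'s. I expect the only genuine obstacle to be bookkeeping: translating "small Euclidean distance between two punctures, bounded away from the others" into "large reciprocal hyperbolic length of the enclosing geodesic in $Z_n$" in a way that is manifestly uniform in $n$; but this is precisely the standard short-geodesic estimate already invoked repeatedly in $\S5$ and $\S6$ (e.g. in the proofs of Lemmas~\ref{gap-t} and \ref{Lip'}), so it can be cited rather than redone.
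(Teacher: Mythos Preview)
Your overall strategy is exactly the paper's (the paper offers no proof beyond calling the corollary a direct consequence of Lemma~\ref{alt-b}): if some $\phi_n(x)$ were close to $x_0$, a short geodesic in $Z_n$ would make $s_n$ large, contradicting $s_n\le M$. There is, however, a gap in how you manufacture that short geodesic. You claim Corollary~\ref{ldd-l} guarantees that the \emph{other} punctures of $\phi_n(P_f)$ stay a definite distance from the pair $\{x_0,\phi_n(x)\}$, but it says no such thing: it only confines every $\phi_n(x_k)$ to the annulus $\delta\le|z|\le M$, which does not prevent a third puncture $\phi_n(y)$ from also drifting toward $x_0$. (Your aside about the points $k\pi$ being fixed is true but irrelevant here, since in general $k\pi\notin P_f$ and hence they are not punctures of $Z_n$.) If several punctures cluster near $x_0$, the round annulus separating exactly $\{x_0,\phi_n(x)\}$ from the rest need not have large modulus, and the specific geodesic you describe need not be short.

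The repair is short. Since $-x_0\in P_f$ and $\phi_n(-x_0)=-x_0$ by the symmetry $\phi_n(-z)=-\phi_n(z)$, there is always a finite puncture at distance $2|x_0|\ge\pi$ from $x_0$ (so if $|P_f|=2$ the corollary is trivial). For $|P_f|=N\ge 3$, list the distances $r_1\le\cdots\le r_{N-1}$ from $x_0$ to the other points of $\phi_n(P_f)$; then $r_1=\rho$ and $r_{N-1}\ge\pi$, so some consecutive ratio $r_{j+1}/r_j$ with $1\le j\le N-2$ is at least $(\pi/\rho)^{1/(N-2)}$. The round annulus $\{r_j<|z-x_0|<r_{j+1}\}$ is then puncture-free, has modulus at least $\tfrac{1}{2\pi(N-2)}\log(\pi/\rho)$, and its core curve is non-peripheral (at least $x_0$ and one other puncture inside; at least $-x_0$ and $\infty$ outside). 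The geodesic in its homotopy class therefore has length $O(1/\log(1/\rho))$, so $s_n\ge c\log(1/\rho)$ once $\rho$ is small, and the contradiction with Lemma~\ref{alt-b} goes through as you intended.
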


\begin{lemma}\label{bounded-geometry}
There exists a positive number  $b > 0$ independent of $n$ such that
$[\mu_{n}] \in T_{f, b}$  for all  $n \ge 0$.
\end{lemma}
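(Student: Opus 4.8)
The plan is to reduce the statement to a uniform lower bound on the length of the shortest simple closed geodesic of $Z_n = {\Bbb C}\setminus\phi_n(P_f)$ — equivalently, of $\widehat{\Bbb C}$ minus the $|P_f|+1$ points $\phi_n(P_f)\cup\{\infty\}$ — and then to convert such a bound, using that $\phi_n$ is normalized at $0$, $\pi$ and $\infty$, into the required uniform spherical separation of $\phi_n(P_f)\cup\{\infty\}$. The systole bound is immediate from Lemma~\ref{alt-b}: write $s_n\le\Lambda$ for all $n$ and put $\ell_0=\min\{\epsilon,1/\Lambda\}$, where $\epsilon$ is the constant of Lemma~\ref{collar}. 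If some simple closed geodesic $\gamma$ of $Z_n$ had $l_{Z_n}(\gamma)<\ell_0$, then, since $l_{Z_n}(\gamma)<\epsilon$, the term $l^{-1}_{Z_n}(\gamma)$ occurs in the sum defining $s_n$, forcing $s_n>1/\ell_0\ge\Lambda$, a contradiction; hence every simple closed geodesic of $Z_n$ has length at least $\ell_0$, for all $n$.

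To carry out the conversion I would argue by contradiction. If the conclusion failed there would be $n_k\to\infty$ and distinct points of $\phi_{n_k}(P_f)\cup\{\infty\}$ at spherical distance $\to 0$; since $P_f$ is finite, after a subsequence these are $\phi_{n_k}(a)$ and $\phi_{n_k}(b)$ for fixed $a\ne b$ in $P_f\cup\{\infty\}$ (with $\phi_{n_k}(\infty)=\infty$). Since $P_f=-P_f$ and $\phi_n(-z)=-\phi_n(z)$, Corollary~\ref{ldd-l} confines $\phi_n(P_f)$ to the fixed annulus $\{\delta\le|z|\le M\}$; this excludes $\infty\in\{a,b\}$, so $a,b\in P_f$. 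If $a$ or $b$ equals $x_0$, then one of the two colliding points is the fixed point $x_0$ and the other is $\phi_{n_k}(x)$ for some $x\in P_f\setminus\{x_0\}$, with $d(x_0,\phi_{n_k}(x))>\delta$ by Corollary~\ref{b-0}; as these stay in a fixed compact subset of ${\Bbb C}$, their spherical distance is also bounded below, contradicting the collision. So $a,b\ne x_0$. Passing to a further subsequence, $\phi_{n_k}(x)\to p(x)\in\widehat{\Bbb C}$ for every $x\in P_f\cup\{\infty\}$, with $p(x_0)=x_0$, $p(\infty)=\infty$, $p(P_f)$ bounded, and — again by Corollary~\ref{b-0} — $p(a)\ne x_0$.

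Set $S=\{x\in P_f:p(x)=p(a)\}$. Then $a,b\in S$, so $|S|\ge 2$; and $x_0\notin S$, $\infty\notin S$, so the complementary puncture set $\phi_{n_k}(P_f\setminus S)\cup\{\infty\}$ contains the two points $x_0$ and $\infty$. The points $\{\phi_{n_k}(x):x\in S\}$ collapse to the single point $p(a)$ while all the others converge to limits different from $p(a)$; hence for all large $k$ there is a round annulus $A_k\subset Z_{n_k}$, of modulus tending to $\infty$, separating $\{\phi_{n_k}(x):x\in S\}$ from $\phi_{n_k}(P_f\setminus S)\cup\{\infty\}$. The geodesic of $Z_{n_k}$ freely homotopic to the core of $A_k$ is non-peripheral (each complementary puncture group has at least two elements) and has length at most $\pi/{\rm mod}(A_k)\to 0$, contradicting the systole bound of the first paragraph. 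This yields the desired $b>0$ with $[\mu_n]\in T_{f,b}$ for all $n$.

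The crux is this conversion step — promoting ``no short geodesic'' to genuine spherical separation of the post-critical points. It is delicate precisely because $0$ and $\pi$ are used only to normalize $\phi_n$ and need not be post-critical, so one cannot simply cite Mumford's compactness theorem for punctured spheres without care; and all three of Lemma~\ref{alt-b}, Corollary~\ref{ldd-l} and Corollary~\ref{b-0} are used, Corollary~\ref{b-0} being exactly what excludes the one degenerate configuration — the whole of $\phi_n(P_f)$ collapsing together so that the complementary puncture set is the single point $\infty$ — in which the short-geodesic mechanism would break down.
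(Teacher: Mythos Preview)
Your argument is correct. It differs from the paper's proof in an interesting way, though both rest on the same foundation (Lemma~\ref{alt-b}).

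The paper argues \emph{dynamically}: given a collision $d(\phi_{n_k}(x_i),\phi_{n_k}(x))\to 0$ with $x_i$ in the critical cycle, it applies the relation $\phi_{n-1}(f(y))=\lambda_{n-1}\sin(\phi_n(y))$ repeatedly (using that $|\lambda_n|$ is bounded and that the points stay in a compact set, so $\sin$ is uniformly Lipschitz there) to push the collision forward along the cycle until it reaches $x_0$, i.e.\ $d(x_0,\phi_{n_k-i}(f^i(x)))\to 0$, and then invokes Corollary~\ref{b-0} directly. No systole statement is ever formulated; Lemma~\ref{alt-b} is used only through its corollary at $x_0$.

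You argue \emph{geometrically}: you extract from Lemma~\ref{alt-b} a uniform systole bound on $Z_n$, use Corollary~\ref{ldd-l} to rule out collision with $\infty$, use Corollary~\ref{b-0} only to anchor $x_0$ away from any limit cluster, and then build a high-modulus round annulus around any remaining cluster to produce a short non-peripheral geodesic, contradicting the systole bound. This is essentially the Mumford-compactness mechanism, carried out by hand because $0,\pi$ need not be punctures.

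What each buys: the paper's route is shorter and exploits the specific form $g_n=\lambda_n\sin$; yours is more self-contained about the passage from ``no short geodesic'' to ``spherical separation'' (the paper leaves the exclusion of $\infty$ and the boundedness needed for the Lipschitz estimate on $\sin$ implicit), and it would transplant unchanged to any situation where one has an analogue of Lemma~\ref{alt-b}, Corollary~\ref{ldd-l}, and a single anchored post-critical point.
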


\begin{proof}

Suppose the lemma were not true.  Then there would be $x_{i}$ and
some $x$ which may not belong to the same periodic cycle as $x_{i}$,
 and an monotonically increasing integer
sequence $n_{k}$ such that as $k \to \infty$,
$$
d(\phi_{n_{k}}(x_{i}), \phi_{n_{k}}(x)) \to 0
$$
where $d(\cdot, \cdot)$ denotes the distance with respect to the
Euclidean metric in the plane.  Since
$$
\lambda_{n}\sin(\phi_{n+1}(x_{l+1})) = \phi_{n}(x_{l}),\quad 0 \le
l\le m-2,
$$
and since $|\lambda_{n}|$ has a uniform upper bound by
Lemma~\ref{key}, we have
\begin{equation}\label{re-ar}
d(\phi_{n_{k}-i}(x_{0}), \phi_{n_{k}-i}(f^{i}(x))) \to 0.
\end{equation}

Note that $\phi_{n}(x_{0}) = x_{0}$ by Lemma~\ref{pre-cri}. So by
(\ref{re-ar}) and by  renewing the notations,   we may assume that
there exist some $1 \le i \le m-1$ and a monotonically increasing
integer sequence $n_{k}$ such that as $k \to \infty$,
$$
d(x_{0}, \phi_{n_{k}}(x)) \to 0.
$$

This is a contradiction with Corollary~\ref{b-0}. The proof of
Lemma~\ref{bounded-geometry} is completed.
\end{proof}

\section{Proof of the Main Theorem}

Let $\tilde{q}(w) dw^{2}$ be a  holomorphic quadratic differential
defined on $\Bbb C$ with $L^{1}$-norm equal to $1$. Let $g(w) =
a\sin(bw + c)+d$ with $ab \ne 0$. Recall that by the push forward of
$\tilde{q}(w)dw^{2}$ through $g$, one can define a new quadratic
differential $q(z)dz^{2}$, see (\ref{push-forward}).

\begin{lemma}\label{norm-cancel}
Let $\tilde{q}$ and $q$ as above. Then one has  $\|q\| <
\|\tilde{q}\|$.
\end{lemma}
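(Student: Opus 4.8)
The plan is to exploit strict inequality in the triangle inequality that appeared in the proof of Proposition~\ref{keyu}. Recall that there we used
$$
\Bigl| \sum_{g(w)=z} \frac{\tilde{q}(w)}{[g'(w)]^{2}}\Bigr| \le \sum_{g(w)=z} \Bigl| \frac{\tilde{q}(w)}{[g'(w)]^{2}}\Bigr|,
$$
and equality holds at a point $z$ only if all the terms $\tilde{q}(w)/[g'(w)]^{2}$, taken over the fibre $g^{-1}(z)$, have the same argument. Since $g(w) = a\sin(bw+c)+d$ is two-to-one over the plane minus the two critical values (it has infinitely many preimages of a generic point), the obstruction to equality is present over a set of positive measure, and that is what forces $\|q\| < \|\tilde{q}\|$.

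\textbf{Key steps.} First I would fix a point $z_{0}$ which is not a critical value of $g$, so the fibre $g^{-1}(z_{0})$ is an infinite discrete set $\{w_{j}\}$, and choose two of these preimages, say $w_{1}$ and $w_{2}$. Because $g$ is (up to the affine changes $w\mapsto bw+c$, $z\mapsto (z-d)/a$) the ordinary sine function, one computes $g'(w) = ab\cos(bw+c)$ and checks that the values $\tilde{q}(w_{1})/[g'(w_{1})]^{2}$ and $\tilde{q}(w_{2})/[g'(w_{2})]^{2}$ are \emph{not} positive real multiples of each other for $w_{1}, w_{2}$ ranging over an open subset of the fibres. Concretely: $\tilde q$ is holomorphic and not identically zero, so its zero set is discrete; away from it, the ratio of the two fibre-terms is a nonconstant holomorphic function of $z$ (a meromorphic function on the plane built from $\tilde q$ and from the two holomorphic branches of $g^{-1}$), hence its argument is nonconstant and cannot stay equal to $0$ on any set of positive measure. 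Therefore there is an open set $E$ of points $z$ over which the inequality displayed above is strict. Second, I would upgrade pointwise strictness to strictness of the integrals: since
$$
|q(z)| \le \sum_{g(w)=z}\Bigl|\frac{\tilde q(w)}{[g'(w)]^{2}}\Bigr|
$$
holds everywhere with strict inequality on $E$, and the right-hand side integrates (by the change of variables $|dz|^{2} = |g'(w)|^{2}|dw|^{2}$, exactly as in Proposition~\ref{keyu}) to $\|\tilde q\| = 1 < \infty$, integrating over $\Bbb C$ gives
$$
\|q\| = \int_{\Bbb C} |q(z)|\,|dz|^{2} < \int_{\Bbb C}\sum_{g(w)=z}\Bigl|\frac{\tilde q(w)}{[g'(w)]^{2}}\Bigr|\,|dz|^{2} = \int_{\Bbb C}|\tilde q(w)|\,|dw|^{2} = \|\tilde q\|.
$$
(Here one should note $P_{f}$ and its finitely many preimages are removed, but these are null sets and do not affect the integrals.)

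\textbf{Main obstacle.} The routine part is the change-of-variables bookkeeping; the point requiring care is verifying that the two fibre-terms are genuinely not aligned over a set of positive measure, i.e. that the argument of $\bigl(\tilde q(w_{1})[g'(w_{2})]^{2}\bigr)/\bigl(\tilde q(w_{2})[g'(w_{1})]^{2}\bigr)$ is nonconstant. The danger would be a conspiracy in which $\tilde q$ is chosen so that this ratio is a positive constant on all fibres simultaneously; one must rule this out. The cleanest way is the open-mapping / identity-theorem argument sketched above: if the ratio had argument identically $0$ on a positive-measure set it would be a positive real constant on an open set, hence (identity theorem) a positive constant everywhere, and then one derives a contradiction with the integrability / nontriviality of $\tilde q$ — for instance by examining the behaviour of the two branches of $g^{-1}$ near a critical value of $g$, where $g'$ vanishes to order one and the two branches merge, which is incompatible with a constant ratio unless $\tilde q$ has matching zeros, and iterating this forces $\tilde q \equiv 0$. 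Once this alignment failure is established on an open set, the rest of the proof is immediate.
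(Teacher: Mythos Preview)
Your overall strategy---force strict inequality in the triangle-inequality step of Proposition~\ref{keyu} by tracking the argument of the ratio of two fibre-terms---is sound and is a genuinely different route from the paper's. The meromorphic-ratio / open-mapping step is fine: if the ratio is nonconstant, its preimage of $(0,\infty)\subset\Bbb R$ has measure zero and you are done.

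The gap is in your handling of the constant-ratio case. Your proposed contradiction via ``branches merging at a critical value'' does not work as written: the two \emph{reflection} branches $w_{1}$ and $w_{2} = (\pi - 2c)/b - w_{1}$ that do merge at a critical point satisfy $[g'(w_{1})]^{2} = [g'(w_{2})]^{2}$, the ratio $\tilde q(w_{1})/\tilde q(w_{2})$ tends to $1$ as they merge, and ratio $\equiv 1$ just says $\tilde q$ is reflection-symmetric---this contradicts neither integrability nor nontriviality, and no iteration forces $\tilde q\equiv 0$. The easy fix is to use instead the \emph{period-translated} branches $w_{1}$ and $w_{2} = w_{1} + 2\pi/b$ (these never merge). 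Again $[g'(w_{1})]^{2} = [g'(w_{2})]^{2}$, so a constant ratio gives $\tilde q(w) = c\,\tilde q(w + 2\pi/b)$ identically; iterating, $|\tilde q(w + 2n\pi/b)| = |c|^{-n}|\tilde q(w)|$, and since integrability forces $\tilde q\to 0$ at infinity, letting $n\to+\infty$ and $n\to-\infty$ gives $\tilde q \equiv 0$. (A side remark: $g$ is not ``two-to-one''; every non-critical value has infinitely many preimages.)

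For comparison, the paper avoids the constant/nonconstant dichotomy altogether and argues constructively. It uses the expansion $\tilde q(w) = \alpha w^{-k} + o(|w|^{-k})$ at infinity (with $k\ge 3$, forced by integrability) and exhibits, for large $r$ with $r\sin(\pi/2k)\in(\pi/b)\Bbb Z$, a point $w = re^{i(\pi/2)(1-1/k)}$ and its period-translate $w' = w - 2k_{0}\pi/b$; one checks $|w'| = r$ and $\arg w' = (\pi/2)(1+1/k)$, so that $w^{-k} = -(w')^{-k}$ exactly. Hence the leading terms of $\tilde q$ at these two preimages of $g(w)$ are negatives of each other, producing visible cancellation in the push-forward near infinity. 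Your argument (once repaired) is more conceptual and would port to other periodic $g$; the paper's is more hands-on and pinpoints where the mass is lost.
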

\begin{proof}
Since $\tilde{q}$ is integrable, it follows that
$$
\tilde{q}(w) = \frac{\alpha}{w^{k}} + o(\frac{1}{|w|^{k}})
$$
holds in a neighborhood of the infinity where $\alpha \ne 0$ and $k
\ge 3$ is some integer.

Let $\theta = \frac{\pi}{2}(1 - 1/k)$. Take $r \gg 1$ such that $r
\sin (\pi/2k) = b^{-1}k_{0} \pi$ for some integer $k_{0} > 0$. Let
$w = re^{i\theta}$. It follows that $\arg(w - 2b^{-1} k_{0} \pi) =
\frac{\pi}{2}(1 + 1/k)$. Thus we have
$$
w^{-k} = -(w -2b^{-1}k_{0} \pi)^{-k}.
$$
Since when $|w| \gg 1$ is large enough, $\tilde{q}(w)$ is dominated
by $|w|^{-k}$. The last equation implies that when $|w|$ is large
enough,  the push forward operator will decrease the norm of
$\tilde{q}$ due to the values of $\tilde{q}$ near $w$ and
$w-2b^{-1}k_{0} \pi$ are almost negative of each other.  The proof
of Lemma~\ref{norm-cancel} is completed.
\end{proof}

Let us prove the existence part of the Main Theorem.  Recall that
$\mu_{0}$ is the standard complex structure on the complex plane and
$\mu_{n}$ is the sequence of the pull backs of $\mu_{0}$ through the
iterations of $f$. Let $$\nu_{0}(t) = (1- t)\mu_{0} + t \mu_{1}.$$
Then $\nu_{0}(t)$ is a smooth curve of Beltrami coefficients on the
complex plane. Moreover, $\nu_{0}(t)$ is $\emph{admissible}$ for
every $0 \le t \le 1$.  Let $\nu_{n}(t)$ be the pull back of
$\nu_{0}(t)$ through $f^{n}$.  It follows that $[\nu_{n}(t)]$ is a
smooth curve in $T_{f}$. We claim that there is a $0< \delta < 1$
such that
\begin{equation}\label{s-t}
\|d_{\sigma_{f}}\big{|}_{[\nu_{n}(t)]}\| \le \delta
\end{equation}
holds for all $n \ge 0$ and $0 \le t \le 1$. Let us prove the claim
now. For $0\le t \le 1$ and $n \ge 0$, let $\phi_{n, t}$ denote the
quasiconformal homeomorphism of the complex plane which solves the
Beltrami equation given by $\nu_{n}(t)$ and which fixes $0$ and
$\pi$. Since $\nu_{n}(t)$ is admissible also, it follows that
$$
\phi_{n,t} \circ f \circ \phi_{n+1, t}^{-1}(z) = \lambda_{n,
t}\sin(z)
$$
where $\lambda_{n, t}$ is some complex number. Note that $[\nu_{n,
0}] = [\mu_{n}]$ and $[\nu_{n, 1}] = [\mu_{n+1}]$. By
Corollary~\ref{key-6}, we have
\begin{equation}\label{ff-g}
d_{T_{f}}([\nu_{n.t}], [\mu_{n}]) \le d_{T_{f}}([\nu_{0, t}]],
[\mu_{0}]).
\end{equation}
Since $\lambda_{n,0} = \lambda_{n}$ is bounded away from the origin
and the infinity by Lemma~\ref{key}, it follows from (\ref{ff-g})
that there exists a $1 < C < \infty$ such that
$$
1/C \le  \lambda_{n, t} \le C
$$
for all $n\ge 0$ and $0\le t \le 1$. Now (\ref{s-t}) follows form
Lemma~\ref{norm-cancel} and a compact argument and the claim has
been proved.  Let $L_{0}$ denote the Teichm\"{u}ller length of the
curve $[\nu_{0}(t)]$.  Then we have
$$
d_{T_{f}}([\mu_{n}], [\mu_{n+1}]) \le \delta^{n} L_{0}.
$$
This implies that $\{[\mu_{n}]\}$ is a Cauchy sequence in $T_{f}$
and thus has a limit point $\tau$. It is clear that $\tau$ is a
fixed point of $\sigma_{f}$. This proves the existence part of the
Main Theorem.

Now let us prove the uniqueness part. Suppose there exist two
distinct fixed points of $\sigma_{f}$. Let $\nu_{0}(t), 0\le t \le
1$, be a smooth path connecting the two points. Then we get a
sequence of pathes $\nu_{n}(t), 0 \le t \le 1$, which belong to a
compact subset of $T_{f}$. By Lemma~\ref{norm-cancel} and  a compact
argument, it follows easily that there exists a $0 \le \delta < 1$
such that (\ref{s-t}) holds for all $n \ge 0$ and $0 \le t \le 1$.
This implies that the Teichm\"{u}ller length of the path
$[\nu_{n}(t)], 0\le t \le 1$, goes to zero as $n \to \infty$. It
follows that the two fixed points coincide and this is a
contradiction.  This proves the uniqueness part of the Main Theorem
and the proof of the Main Theorem is completed.

\end{document}